\newtheorem{theorem}{Theorem}[section]
\newtheorem{proposition}[theorem]{Proposition}
\newtheorem{lemma}[theorem]{Lemma}
\newtheorem{corollary}[theorem]{Corollary}
\newtheorem{definition}[theorem]{Definition}
\newtheorem{remark}[theorem]{Remark}
\newtheorem{example}[theorem]{Example}
\newtheorem{assumption}[theorem]{Assumption}
\newcommand{\R}{\mathbb{R}}
\newcommand{\Q}{\mathbb{Q}}
\newcommand{\N}{\mathbb{N}}
\newcommand{\E}{\mathbb{E}}
\renewcommand{\P}{\mathds{P}}
\newcommand{\Dc}{\mathcal{D}}
\renewcommand{\epsilon}{\varepsilon}
\newcommand{\x}{x_0}
\begin{document}

\title[]{Strategies with minimal norm are optimal for expected utility maximization under high model ambiguity}

\date{\today}

\author{Laurence Carassus}
\address{Laurence Carassus \newline L\'{e}onard de Vinci P\^ole Universitaire, Research Center, 92 916 Paris La D\'{e}fense, France \newline
and LMR, UMR 9008, Universit\'e Reims Champagne-Ardenne, France}
\email{laurence.carassus@devinci.fr}
\author{Johannes Wiesel}
\address{Johannes Wiesel\newline
Columbia University, Department of Statistics\newline
1255 Amsterdam Avenue\newline
New York, NY 10027, USA}
\email{johannes.wiesel@columbia.edu}

\thanks{JW acknowledges support by NSF Grant DMS-2205534.}

\begin{abstract}
We investigate  an expected utility maximization problem under model uncertainty in a one-period financial market. We capture model uncertainty by replacing the baseline model $\mathbb{P}$ with an adverse choice from a Wasserstein ball of radius $k$ around $\mathbb{P}$ in the space of probability measures and consider the corresponding Wasserstein distributionally robust optimization problem. We show that optimal solutions converge to a strategy with minimal norm when uncertainty is increasingly large, i.e. when the radius $k$ tends to infinity.
\end{abstract}

\maketitle

\vspace{2cm}

\section{Introduction}
In this article we consider a one period financial market with $d$ risky assets and a num\'{e}raire, all of them being traded.  We refer to Section \ref{sec:finance} for a more detailed description of the financial market under consideration.
We investigate the \emph{Wasserstein distributionally robust optimization problem}
\begin{align}
\label{eq:valeur}
u(\x):=\sup_{w \in D }\inf_{\Q\in B_k(\P)} \E_\Q[U(\x+\x\langle w, X\rangle)]
\end{align}
for a concave non-decreasing utility function $U:\R\to \R,$ a non-empty, closed constraints set $D\subseteq \R^d$ and an initial capital $\x\neq 0$. In the above, $\langle \cdot, \cdot \rangle$ denotes the scalar product on $\R^d$ and the vector $w\in \R^d$ denotes the portfolio weights in the $d$ risky assets. 
The probability measures $\P$ and $\Q$ describe the laws of the vector  of returns of the $d$ risky assets, and  $X:\R^{d}\to \R^{d}$ denotes the identity map.  
We assume that $\E_{\P}[|X|^p]<\infty$ for some $p\ge 1,$  where $|\cdot|$ is the Euclidean norm on $\R^d$, and denote
the $p$-Wasserstein ball of radius $k>0$ around $\P$ by $B_k(\P)$. 
Defining 
\begin{align}\label{eq:prob}
u(k,w):= \inf_{\Q\in B_k(\P)} \E_\Q[U(\x+\x\langle w, X\rangle)]
\end{align}
for all  $w\in \R^d$ and   $$w_k:=\mathrm{arg\ max}_{w \in  D} u(k,w),$$
the main result of this paper is the following:

\begin{theorem}\label{thm:main}
Let $U: \R \to \R$ be a non-decreasing, non-constant, differentiable and concave function and assume that
there exist  some constants $C_1,\underline{x}>0$
such that $U(-\underline{x})<0$ and for all $x\leq - \underline{x}$  
\begin{align}\label{mino}
U(x) & \ge -C_1(1+|x|^p).
\end{align}
Furthermore assume that one of the three conditions holds true:
\begin{enumerate}[(i)]
\item $U$ is bounded from above.
\item $\mbox{AE}_{-\infty}(U):=\liminf_{x \to -\infty} \frac{xU\rq{}(x)}{U(x)} >1$.
\item $\mbox{AE}_{+\infty}(U):=\limsup_{x \to +\infty} \frac{xU\rq{}(x)}{U(x)} <1.$
\end{enumerate}
Then $w_k$ is well-defined and $\lim_{k\to \infty} w_k$ exists and belongs to $\Dc$, where 
\begin{align}\label{eq:wu1}
\Dc=\{\overline{w} \in D\;:  \; |\overline{w}|= \inf_{w \in D}|w|\}.
\end{align}
\end{theorem}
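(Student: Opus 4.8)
\emph{Overview and conventions.} The plan is to pass to a dual/primal reformulation of \eqref{eq:prob}, to prove that for $k$ large the map $w\mapsto u(k,w)$ is, up to lower-order terms, a strictly decreasing function of $|w|$ alone, and then to read off the statement. We assume throughout that $\x>0$; for $\x<0$ the adverse measure transports mass in the direction $+w/|w|$ and the argument is symmetric. For $w\neq0$ write $v=w/|w|$ and $m=\inf_{w\in D}|w|$. Since the integrand depends on $X$ only through $\langle w,X\rangle$, the cheapest transport realising a prescribed level of $\langle w,\cdot\rangle$ moves mass parallel to $v$; combined with Wasserstein strong duality (applicable because, by \eqref{mino}, $U$ is minorised by a multiple of $1+|\cdot|^p$), this yields, for $w\neq0$,
\begin{align}\label{eq:dualplan}
u(k,w)=\sup_{\mu\ge0}\Big\{-\mu(|w|k)^p+\E_\P\Big[\inf_{t\ge0}\big(U\big(\x(1+\langle w,X\rangle)-\x t\big)+\mu t^p\big)\Big]\Big\},
\end{align}
equivalently $u(k,w)=\inf\{\E_\P[U(\x(1+\langle w,X\rangle)-\x|w|a(X))]:a\ge0\text{ measurable},\ \E_\P[a(X)^p]\le k^p\}$ (attainment of the optimal $a$ and the duality itself are routine, presumably available from the preliminary sections). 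The essential feature is that $w$ enters only through the random variable $\langle w,X\rangle$, of order $|w|$, and through the scalar budget $|w|k$, and the adverse measure's effect scales with the latter.

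\emph{Step 1 (soft facts and radial monotonicity).} As an infimum of expectations of the concave maps $w\mapsto U(\x(1+\langle w,X\rangle))$, $u(k,\cdot)$ is concave, and finite by \eqref{mino} and $\E_\P[|X|^p]<\infty$. The primal form yields a key monotonicity: if $\lambda\ge1$ and $k\ge(\E_\P[|X|^p])^{1/p}$ then $u(k,\lambda w)\le u(k,w)$. Indeed, given $a$ admissible for $u(k,w)$, set $a'=\big((1-\tfrac1\lambda)\langle v,X\rangle+\tfrac1\lambda a\big)^+$; by Minkowski $\|a'\|_{L^p(\P)}\le(1-\tfrac1\lambda)\|X\|_{L^p(\P)}+\tfrac1\lambda k\le k$, while on $\{a'>0\}$ one has $\lambda(\langle v,X\rangle-a')=\langle v,X\rangle-a$ and on $\{a'=0\}$ one has $\lambda\langle v,X\rangle\le\langle v,X\rangle-a$, so in both cases $U(\x(1+\lambda|w|(\langle v,X\rangle-a')))\le U(\x(1+|w|(\langle v,X\rangle-a)))$; taking expectations and then the infimum over $a$ gives the claim. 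Using the admissible $a=\langle v,X\rangle^++1$ one also gets $u(k,w)\le U(\x(1-|w|))$ for $k$ large, so $u(k,\cdot)$ is coercive and the supremum defining $w_k$ is attained over the closed set $D$ for $k$ large — which is all that matters below. Finally $\Dc\neq\emptyset$ because a norm-minimising sequence in $D$ is bounded and $D$ is closed.

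\emph{Step 2 (the crux: asymptotics of $u(k,\cdot)$).} The heart of the proof is to establish that $u(k,w)\to-\infty$ for every $w\neq0$ and that
\begin{align}\label{eq:radplan}
\liminf_{k\to\infty}\ \inf_{|v|=1}\frac{u(k,r'v)}{u(k,rv)}>1\quad(0<r<r'),\qquad \sup_{|v|=|v'|=1}\Big|\frac{u(k,rv)}{u(k,rv')}-1\Big|\to0\quad(r>0).
\end{align}
Here the hypotheses (i)--(iii) enter together with \eqref{mino}: \eqref{mino} forces $U$ to decay at $-\infty$ at most polynomially of degree $p$, (ii) forces decay that is at least superlinear, and (i)/(iii) play the corresponding role at $+\infty$ that keeps the optimisation over $\mu$ in \eqref{eq:dualplan} nondegenerate and guarantees the regularity of $U$ near $\pm\infty$ needed below. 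Analysing $\inf_{t\ge0}(U(\x(1+y)-\x t)+\mu t^p)$ as $\mu\downarrow0$ (a Legendre/Laplace-type computation) one obtains $u(k,w)=-\Psi(|w|k)(1+o(1))$ for an increasing function $\Psi$ with $\liminf_{s\to\infty}\Psi(\lambda s)/\Psi(s)>1$ for every $\lambda>1$ — of "index" $\gamma\in[1,p]$ — uniformly over $w$ in compact subsets of $\R^d\setminus\{0\}$; the dependence on the \emph{direction} of $w$ is only of the lower order $O((|w|k)^{\gamma-1})$, controlled by a mean-value estimate on $U$ together with H\"older's inequality (using $\gamma\le p$ and $\E_\P[|X|^p]<\infty$). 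Since then $\Psi(r'k)/\Psi(rk)\to(r'/r)^{\gamma}>1$ and the direction term is negligible against $\Psi$, \eqref{eq:radplan} follows. I expect this step — making the leading term $\Psi(|w|k)$ genuinely sharp (not just sharp up to constants), handling the borderline exponents $\gamma=1$ and $\gamma=p$, and securing the uniformity in the direction — to be the main technical obstacle; the rest is comparatively soft.

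\emph{Step 3 (conclusion).} Fix $\bar w\in\Dc$, so $|\bar w|=m$, and write $\hat v_k=w_k/|w_k|$. For any $R>m$ and $k$ large: if $|w_k|>R$ then Step 1 gives $u(k,w_k)\le u(k,R\hat v_k)$, hence $u(k,\bar w)\le u(k,w_k)\le\sup_{|v|=1}u(k,Rv)$; but combining the two estimates in \eqref{eq:radplan} (the second to compare $u(k,\bar w)$ with $\sup_{|v|=1}u(k,mv)$, the first with $r=m$, $r'=R$) yields $\sup_{|v|=1}u(k,Rv)<u(k,\bar w)$ for $k$ large, a contradiction; hence $\{w_k\}$ is bounded. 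The same radial-shrinking step with \eqref{eq:radplan} applied for $r=m$, $r'=m+\delta$ shows that $|w_k|\ge m+\delta$ cannot hold along a subsequence, so $|w_k|\to m$. As $\{w_k\}$ is bounded and $D$ closed, every accumulation point of $(w_k)$ lies in $D$ with norm $m$, i.e.\ in $\Dc$. Finally $\lim_k w_k$ exists: the strictness provided by (i)--(iii) yields uniqueness of $w_k$, and either $\Dc$ is a singleton or, more generally, a next-order expansion of $u(k,\cdot)$ near $\Dc$ (whose leading correction is governed by $\langle\cdot,\E_\P[X]\rangle$) singles out one point of $\Dc$ as the limit.
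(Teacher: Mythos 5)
There is a genuine gap, and you flag it yourself: Step 2 is not a proof but a wish list. The entire conclusion rests on the claim that $u(k,w)=-\Psi(|w|k)(1+o(1))$ for some increasing $\Psi$ of index in $[1,p]$, with the $o(1)$ uniform over directions and over compact sets of $w$, from which your two ratio statements would follow. None of this is established: the ``Legendre/Laplace-type computation'' for the inner infimum is not carried out, the existence of a single direction-free profile $\Psi$ is not justified, and the uniformity in $w$ (which is exactly what Step 3 consumes) is asserted, not proved. Without it, the comparison $\sup_{|v|=1}u(k,Rv)<u(k,\bar w)$ and the exclusion of $|w_k|\ge m+\delta$ both collapse. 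It is worth noting that the paper's proof shows such sharp asymptotics are unnecessary: Proposition \ref{prop:main} compares $u(k,w)$ and $u(k,\hat w)$ directly through the concavity inequality $U(A_k(w))-U(A_k(\hat w))\le \tfrac{\delta}{|\hat w|}\bigl(U(A_k(\hat w))-U(G_k)\bigr)$ for an auxiliary wealth $G_k$, then sends $\E_\mu[U(A_k(\hat w))]\to-\infty$ by exhibiting the translated measure $\varphi_*\P\in B_k(\P)$ and using Jensen, while $\E_\mu[U(G_k)]$ stays bounded below by \eqref{mino}. That soft two-sided estimate, which needs no regular-variation structure and no uniform expansion, is the key idea your proposal is missing.

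Two further points. First, your ``equivalent'' primal form with a deterministic shift $a(X)$ is not justified: since $U$ is concave, randomizing the shift conditionally on $X$ can strictly lower the adversary's value (e.g.\ for $p=1$, or when $\P$ has atoms), so the Monge-type infimum is in general only an upper bound for $u(k,w)$; this is precisely why the paper's Lemmas \ref{lem:villani} and \ref{lem:2} work on an atomless auxiliary space. Your radial-monotonicity construction in Step 1 (which is a nice observation, and gives a simpler coercivity/boundedness argument than the paper's Proposition \ref{Kbound}) uses the unproven equivalence in the direction that matters; it can be repaired by running the same construction with $|Z^\mu|$ in the coupling-based formulation \eqref{eq:Z}, but as written it does not prove $u(k,\lambda w)\le u(k,w)$. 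Second, in Step 3 the existence of $\lim_k w_k$ is waved through: uniqueness of $w_k$ does not follow from (i)--(iii) (no strict concavity in $w$ is available and $D$ need not be convex), and the ``next-order expansion'' selecting a point of $\Dc$ is not carried out; what your argument actually yields is that $|w_k|\to\inf_{w\in D}|w|$ and that every accumulation point lies in $\Dc$, which is the content of Corollary \ref{coro:main} but not a complete proof of the limit statement as you present it.
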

\begin{remark}
\begin{enumerate}[(a)]
\item If  \eqref{mino} is not postulated, the problem is ill-posed (see Lemma \ref{lem:ill}).  The conditions (ii) and (iii) of Reasonable Asymptotic Elasticity in Theorem \ref{thm:main} are classical, see the discussion in Section \ref{sec:discussion}. One may instead assume the milder conditions \eqref{conc}, see 
Lemma \ref{lem:bc2} and Proposition \ref{Kbound} below. 
\item If $0\in  D$, then trivially $\Dc=\{0\}$. Assume now that $0\notin  D.$ As $D$ is closed, 
$\Dc\neq \emptyset$ and $\Dc$ is included in $\partial D= D \setminus \mathring{D}$, the boundary of $D$. If $D$ is furthermore assumed to be convex, then $\Dc$ reduces to a singleton.
\item In the statement of Theorem \ref{thm:main}, one can replace the Euclidean norm $|\cdot|$ by any other norm $\|\cdot\|$ on $\R^d$. In that case, the Wasserstein ball $B_k(\P)$  has to be defined wrt. the dual norm $\|\cdot\|_\ast := \sup_{\|w\|\le 1} \langle \cdot, w\rangle$ instead of $|\cdot|$; see \eqref{eq:wass} below.
\item We prove in Theorem \ref{thm:main} that $\lim_{k \to \infty}  \mathrm{arg\ max}_{w \in D} u(k,w)\in \Dc$. 
 One may wonder if $\mathrm{arg\ max}_{w \in D} \lim_{k \to \infty} u(k,w) \in \Dc$ too. We show in Lemma \ref{lem:-infty} that in fact $\lim_{k \to \infty} u(k,w)=-\infty,$ so that any $w$ is an optimizer. 
\end{enumerate}
\end{remark}

\begin{example}
We now provide examples of $D$ and $U$, for which the assumptions made in Theorem \ref{thm:main} are satisfied. We start with the constraints set $D$.
\begin{enumerate}[(a)]
\item Let $a>0$ and set
\begin{align*}
D=\{w\in \R^d: \langle w, \mathds{1}\rangle\ge a\} \mbox{ or } D=\{w\in \R_+^d: \langle w, \mathds{1}\rangle\ge a\},
\end{align*}
where $\mathds{1}=(1,\dots, 1)$   is the unit vector in $\R^d$. 
Then,  in both cases, $D$ is closed, convex and $\Dc=\{w^u\}$, where $w^u=\frac{a}{d} \mathds{1}$ is the rescaled unit vector satisfying $\langle w^u, \mathds{1}\rangle =a$. 
So, when we work under the constraint that the total exposure to the risky assets is positive (with or without no short-selling constraints)  we find that optimal solutions converge to the uniform diversification strategy when uncertainty is increasingly large. As $U$ is defined on $\R$, the investor may have negative wealth and  we may also allow negative total exposure to the risky assets, i.e. 
\begin{align*}
D=\{w\in \R^d: \langle w, \mathds{1}\rangle\ge a  \mbox{ or }\langle w,  \mathds{1}\rangle\le -a\}. 
\end{align*}
As $D$ is not convex we find that $\Dc$ is not a singleton any more and $\Dc=\{w^u, -w^u\}.$
\item We now give examples of $U$ satisfying our assumptions. Recall that $p \geq 1$ and define
\begin{align*}
U(x):=
(1-e^{-x})
\mathds{1}_{\{x>0\}} -\frac{1}{p}((1-x)^{p}-1)\mathds{1}_{\{x\leq 0\}}.
\end{align*}
 Then $U$ is concave, strictly increasing, continuously differentiable, bounded from above and
it satisfies  \eqref{mino}. Next, if
\begin{align*}
U(x):=\log(x)\mathds{1}_{\{x>1\}} +(x-1)\mathds{1}_{\{x\leq 1\}},
\end{align*}
then $U$ is concave, strictly increasing, continuously differentiable and
it satisfies \eqref{mino} and (iii) with $\mbox{AE}_{+\infty}(U)=0$. 
Lastly, (ii) can only occur if $p>1$, see Remark \ref{rem2}. So let us assume that $p>1$ and take $q \in (1,p]$ and define 
\begin{align*}
U(x):=x
\mathds{1}_{\{x>0\}} -\frac{1}{q}( (1-x)^{q}-1) \mathds{1}_{\{x\leq 0\}}.
\end{align*}
Then $U$ is concave, strictly increasing, continuously differentiable and
it satisfies  \eqref{mino} and (ii) with $\mbox{AE}_{-\infty}(U)=q$. 
\end{enumerate}
\end{example}

\section{Literature review and discussion of Theorem \ref{thm:main}}

The study of optimal investment problems under a fixed probabilistic model $\P$ goes back at least to \cite{merton1969lifetime}. More recently, it has been widely acknowledged that agents might face some degree of uncertainty about the choice of $\P$. This is often referred to as \emph{Knightian uncertainty} after \cite{knight2012risk} and has led to the max-min approach to optimal investment, as studied in detail in \cite{gilboa1989maxmin,maccheroni2006ambiguity,schied2007optimal,schied2009robust}. These works consider an agent who is averse to model uncertainty and thus solves the worst utility maximization problem 
$$ \max_{w} \min_{\Q\in \mathcal{Q}}\E_{\Q}[U(\x+\x\langle w, X\rangle)];$$ we refer to \cite{obloj2021distributionally} for a more complete discussion with historical references.\\

There are many ways to model Knightian uncertainty through the ambiguity set $\mathcal{Q}$ above: we refer to \cite{maenhout2004robust,calafiore2007ambiguous,kerkhof2010model,hansen2011robustness}, and the references therein. In this article we define $\mathcal{Q}$ to be a ball $B_k(\P)$ in the \textit{Wasserstein distance} $\mathcal{W}_p$ around $\P$. The Wasserstein distance has seen a recent spike of interest in the mathematical finance and statistics literature. One reason for this is the fact that Wasserstein balls can be considered non-parametric neighbourhoods of $\P$; indeed $B_k(\P)$ contains infinitely many measures, which are not absolutely continuous with respect to each other. Let us also mention the recent statistical advances of e.g. \cite{dereich2013constructive,fournier2015rate} on the convergence speed of the Wasserstein distance between the true and empirical measure, as well as  (dual) reformulations of $u(\x)$, or more generally Wasserstein distributionally robust optimization problems, see e.g. \cite{pflugwozabal,mohajerin2018data,blanchet2019quantifying,gao2022distributionally} and the survey articles \cite{pflug2018review,rahimian2019distributionally}. \\

In this work we are interested in an agent's optimal investment decision when facing an \textit{increasingly large} amount of Knightian uncertainty. Our main contribution is that, under minimal assumptions, an optimal portfolio choice exists and converges to some portfolio that minimizes the distance between the constraints set $ D$ and the origin. \\

Our convergence result can be contrasted with the recent works \cite{bartl2020robust} and \cite{obloj2021distributionally}, where the authors compute first-order sensitivities of the value function $u(\x)$ for $k \to 0$, i.e. for infinitesimally small amounts of model uncertainty. It is important to point out, that in order to analyze the other end of the asymptotic regime---namely infinitesimally large amounts of model uncertainty---as we do in this paper, we actually cannot rely on the analytic theory derived in these works. Instead we follow a different strategy, which can be summarized through the following three-step procedure: first, we prove that in $u(\x)$ we can restrict to bounded weights (see Proposition \ref{Kbound} and Lemma \ref{lem:bounded}). Then we give a reformulation of $u(\x)$, which essentially reduces $u(k,w)$ to the following optimization problem 
\begin{align}\label{eq:step2}
u(k,w)=\inf_{X^\mu, \,Z^\mu} 
\E_{\mu} [ U(\x+\x\langle w,X^{\mu} \rangle - |\x||Z^\mu||w|)],
\end{align}
where $X^{\mu}$ has law $\P$ under $\mu$ and $Z^\mu$ is such that $\E_{\mu}[|Z^\mu|^p]^{1/p}\le k$ for some atomless measure $\mu$, see Lemma \ref{lem:2}. Finally, we analyze this problem when $k\to \infty$. It turns out that while the distribution of $X^\mu$ does not change, the radially symmetric part $|Z^\mu|$ will dominate in \eqref{eq:step2} and if $|w|-|\overline{w}|>0$ for some $\overline{w} \in  \Dc$, $u(k,w)-u(k,\overline{w})$ goes to $-\infty$, see Proposition \ref{prop:main}. It is thus sufficient to recall that any $\overline{w} \in \Dc$ is a vector of $ D$ minimizing the Euclidean distance between $ D$ and  the origin. 
Following this idea of proof enables us to keep the assumptions on $U$ and $\P$ minimal. 

Note that \eqref{eq:step2} also implies that the domain of $U$  is necessarily the whole real line and not only $(0,\infty)$.  When $U$ is defined on $\R$, the existence of an optimal strategy has been proved in a one period model in the PhD thesis of R. Blanchard, see \cite{RB}. The generalization to the multiperiod case is provided in \cite{CF23}. When the uncertainty is modelled by $ B_k(\P)$, the proof simplifies significantly and we are able to prove existence of optimizers under minimal conditions on $U.$ We also emphasize that we do not postulate any no-arbitrage condition on the market. In fact, the quasi-sure no arbitrage condition of \cite{bouchard2015arbitrage} for $ B_k(\P)$ is satisfied automatically (see Proposition \ref{prop:na}).\\ 

Probably the paper closest to the results presented here is \cite{pflug20121}, where the authors consider a robust maximisation problem for risk measures. Their main examples concern the Markowitz functional and the conditional value at risk for the constraints set $D=\{w\in \R^d: \langle w, \mathds{1}\rangle=1 \}.$ However, their setting does not easily translate to ours: indeed, \cite[Proposition 1]{pflug20121} essentially assumes that the map $\Q\mapsto \|U'(X)\|_{L^p(\Q)}$ is constant on $B_k(\P)$, which is clearly not satisfied for general utility functions of interest. Furthermore, results  similar to \cite{pflug20121} are proved in \cite{sass2021robust} for the power and logarithmic utility functions when there is drift uncertainty in a multivariate continuous-time Black-Scholes type financial market. However, to the best of our knowledge, the case of utility maximization with a general constraints set has not been addressed, even in a one-period setup. Note that the most used portfolio choice models in the financial industry are indeed one-period models, which are especially popular with pension funds or banks, see e.g. \cite{basel}.  \\

Lastly let us mention that, while an extension of our main result to multiple periods seems natural to expect, the methods developed in this article do not seem adequate to provide a direct route for this extension. Indeed, for this one would first need to replace the Wasserstein ball $B_k(\P)$ by a ball in the adapted Wasserstein distance, see e.g \cite{backhoff2020adapted} and the references therein. However, we have failed to derive a dynamic programming principle for $u(k,w)$ as the one-step associated problems do not carry our minimal assumptions, in particular monotonicity of $\x\mapsto u(\x)$. In consequence we leave the extension of Theorem \ref{thm:main} to multiple periods for future research.\\

The rest of the paper is organized as follows. Section \ref{sec:notation} summarizes the notation used in the paper. After describing how we model the financial market in Section \ref{sec:finance} and detail the no arbitrage condition, we give a preliminary discussion of the assumptions of Theorem \ref{thm:main} in Section \ref{sec:discussion}. We then show existence of optimizers in different settings in Section \ref{sec:existence}. These results enable us to state the proof of Theorem \ref{thm:main}, which can be found in Section \ref{sec:proof}. After giving proofs omitted in the main text in Appendix \ref{sec:proofs}, we conduct a more in depth analysis of the function $u(k,w)$ in Appendix \ref{sec:appendix}, where we detail sufficient criteria for the existence and uniqueness of
worst-case measures in the ball $B_k(\P)$.

\section{Notation} \label{sec:notation}

Let us quickly summarize the notation used throughout the article: $C$ denotes a positive constant (which might change from occurrence to occurrence), $\langle \cdot, \cdot \rangle$ denotes the scalar product, $|\cdot|$ the Euclidean distance and $X:\R^l\to \R^l$ is the identity mapping for all $l\ge 1$. 
Let $p\ge 1.$  We denote by $\mathcal{P}_p(\R^d)$ the set of probability measures $\Q$ on $\R^d$ satisfying $\E_{\Q}[|X|^p]<\infty$. Fix $\P \in \mathcal{P}_p(\R^d)$ and define $C_{\P}:=(\E_{\P}[|X|^p])^{\frac1p}.$ 
We define the ball $B_k(\P)$ around $\P$ of radius $k>0$ with respect to the Wasserstein metric
\begin{align}\label{eq:wass}
\mathcal{W}_p (\P, \Q):=\Big( \inf_{\pi \in \Pi(\P, \Q)} \E_\pi[|X-Y|^p] \Big)^{1/p},
\end{align}
where $\Pi(\P, \Q)$ is the set of probability measures on $\R^d\times \R^d$ such that $X_*\pi=\P$ and $Y_*\pi=\Q$. Here we denote by $X:\R^d\times \R^d \to \R^d$ and $Y:\R^d\times \R^d\to \R^d$ the canonical projections to the first and second coordinate respectively, by a slight abuse of notation (which is standard in the optimal transport literature). The expression $f(X,Y)_*\pi$ or $f_*\pi$ stands for the push-forward measure of $\pi$ under the function $(x,y)\mapsto f(x,y)$, i.e. $f(X,Y)_*\pi(A)=f_*\pi(A):=\pi(f(X,Y)\in A)$ for all Borel sets $A$ of $\R^d\times \R^d$.  An application of the triangle inequality yields 
\begin{eqnarray}
\label{triangle}
\E_{\Q}[|X|^p]^{\frac1p}=\mathcal{W}_p(\Q, \delta_0) \leq \mathcal{W}_p(\P, \delta_0) + \mathcal{W}_p(\Q, \P)\leq C_{\P}+k<\infty
\end{eqnarray}
for all $\Q \in B_k(\P)$.

\section{Financial market}\label{sec:finance}
We consider a one period financial market with $d+1$ traded assets.
The returns $R^i$, $i \in \{0,\ldots,d\}$ represent the profit (or loss) created tomorrow from
investing one dollar's worth of asset $i$ today.
For notational simplicity we assume that  the num\'{e}raire asset has a constant price, i.e. $R^0=0$. 
An agent can trade in the market 
using    the portfolio weights $w=(w^0,\ldots,w^d)\in \R^{d+1}$ where $w^i$ is  the fraction of initial wealth $\x$ held in asset $i$, recalling that $\x\neq 0.$  So, 
$\sum_{i=0}^d w^i=1.$ 
The portfolio value at time one is given by 
\begin{align}\label{eq:V}
\begin{split}
V^{\x,w}&:=\x + \x\sum_{i=1}^{d} w^i R^i.
\end{split}
\end{align}

As stated in the Introduction, we model the distribution of $R:=(R^1, \dots, R^d)$ through a probability measure $\P$, i.e. the distribution of $R$ is equal to the distribution of the identity map $X$ under $\P$. By a slight abuse of notation we then write $V^{\x,w}= \x+\x\langle w, X\rangle$ for $w\in \R^d$. We can now state our definition of efficient markets relative to the prior $\P$:

\begin{definition}\label{def:na}
We say that the market has \emph{no arbitrage} $\mathrm{NA}(\P)$, if 
\begin{align*}
\forall \x\neq 0: \quad  V^{\x,w}\ge \x \quad \P\mathrm{-a.s.} \qquad \Rightarrow\qquad V^{\x,w} =\x \quad \P\mathrm{-a.s.}
\end{align*}
\end{definition}

A similar definition is used in \cite{RS06}[(3) and Proposition 1]. Definition \ref{def:na} is sometimes also called relative arbitrage with respect to the num\'{e}raire portfolio.
Using \eqref{eq:V} and $\x\neq 0$ we obtain that $\mathrm{NA}(\P)$ is equivalent to 
\begin{align*}
\langle  w, X \rangle \ge 0 \quad \P\text{-}\mathrm{a.s.} \qquad \Rightarrow\qquad \langle w, X\rangle=0 \quad \P\text{-}\mathrm{a.s.}
\end{align*}

We also define the following natural generalization to a set of priors $\mathcal{Q}$, going back to the seminal work of \cite{bouchard2015arbitrage}:
\begin{definition}
For a set $\mathcal{Q}$ of probability measures on $\R^d$ we say that quasi-sure no arbitrage NA($\mathcal{Q}$) holds, if for all $w\in \R^d$ 
\begin{align*}
\langle w, X\rangle\ge  0\quad \mathcal{Q}\mathrm{-q.s.} \qquad \Rightarrow\qquad \langle w, X\rangle =0 \quad \mathcal{Q}\mathrm{-q.s.}
\end{align*}
\end{definition}

The next proposition shows that the quasi-sure notion of no arbitrage is very weak when applied to Wasserstein balls $B_k(\P)$. Indeed, it holds independently if $\mathrm{NA}(\P)$ is true or not:
\begin{proposition}\label{prop:na}
$\mathrm{NA}(B_k(\P))$ holds for any $\P\in \mathcal{P}_p(\R^d)$ and any $k>0$.
\end{proposition}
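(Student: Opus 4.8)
The plan is to show that if $\langle w, X\rangle \ge 0$ holds $B_k(\P)$-q.s., then in particular $\langle w, X\rangle \ge 0$ holds $\P$-a.s., and then to construct for any such $w \ne 0$ a measure $\Q \in B_k(\P)$ that puts positive mass on the set $\{\langle w, X\rangle < 0\}$, unless $\langle w, X\rangle = 0$ $\P$-a.s. already. Since $\P \in B_k(\P)$, the q.s.\ hypothesis immediately gives $\langle w, X\rangle \ge 0$ $\P$-a.s., so it suffices to prove: for every $w \ne 0$ with $\langle w, X\rangle \ge 0$ $\P$-a.s.\ but $\P(\langle w, X\rangle > 0) > 0$, there exists $\Q \in B_k(\P)$ with $\Q(\langle w, X\rangle < 0) > 0$; this contradicts the q.s.\ implication and forces $\langle w, X\rangle = 0$ $\P$-a.s., i.e.\ $\mathrm{NA}(B_k(\P))$.

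The construction is a small, local perturbation of $\P$ in the direction $-w/|w|$. Fix $w \ne 0$ as above and set $e = w/|w|$. Since $\P(\langle w, X\rangle > 0) > 0$, there is a Borel set $A$ with $0 < \P(A) < \infty$ (indeed $\P(A) \le 1$) on which $\langle w, X\rangle \ge \delta$ for some $\delta > 0$; shrinking $A$ we may also assume $\P(A)$ is as small as we like, say $\P(A) = \eta$. Define the transport map $T(x) = x - (\delta/|w| + \varepsilon)\,e$ for $x \in A$ and $T(x) = x$ otherwise, for a small $\varepsilon > 0$, and let $\Q = T_*\P$. Then on $A$ we have $\langle w, T(x)\rangle = \langle w,x\rangle - (\delta + \varepsilon|w|) \le -\varepsilon|w| < 0$, so $\Q(\langle w, X\rangle < 0) \ge \Q(T(A)) \ge \P(A) = \eta > 0$ (using that $T$ moves $A$ into the region $\{\langle w,\cdot\rangle<0\}$, disjoint from the unmoved part). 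On the other hand, the coupling $\pi = (\mathrm{id}, T)_*\P$ lies in $\Pi(\P,\Q)$ and
\begin{align*}
\mathcal{W}_p(\P,\Q)^p \le \E_\pi[|X-Y|^p] = \int_A (\delta/|w| + \varepsilon)^p \, d\P = (\delta/|w| + \varepsilon)^p \, \eta.
\end{align*}
Choosing $\eta$ small enough (for fixed $\delta, \varepsilon$) makes the right-hand side smaller than $k^p$, so $\Q \in B_k(\P)$, as desired.

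The only genuinely delicate point is the measurability and the existence of the set $A$: one must check that $\{\langle w, X\rangle \ge \delta\}$ has positive $\P$-measure for some $\delta > 0$ (which follows from $\P(\langle w, X\rangle > 0) > 0$ by continuity of measure along $\delta \downarrow 0$) and that one can extract from it a Borel subset of arbitrarily small positive measure (immediate unless $\P$ has an atom carrying all the mass, in which case one simply keeps the atom and instead shrinks $\varepsilon$ and $\delta/|w|$ together — here one uses that $\langle w, X\rangle \ge \delta$ still holds on the atom, so any $(\delta/|w| + \varepsilon)$ with the stated sign works, and one can take $\varepsilon, \delta \to 0$ to make $\mathcal{W}_p(\P,\Q)^p \le (\delta/|w|+\varepsilon)^p \P(\{\text{atom}\}) < k^p$). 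Everything else is a routine estimate with the explicit coupling $\pi$, and since $k > 0$ is arbitrary but fixed, no largeness of $k$ is needed — the perturbation can always be made small in Wasserstein distance while still reaching $\{\langle w, X\rangle < 0\}$.
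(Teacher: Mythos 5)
Your argument stops short of the statement you need to prove. From the quasi-sure hypothesis you derive (for $w\neq 0$) that $\langle w, X\rangle = 0$ $\P$-a.s., and you then declare this to be $\mathrm{NA}(B_k(\P))$. But the definition requires $\langle w, X\rangle = 0$ \emph{quasi-surely}, i.e. under every $\Q\in B_k(\P)$, and $\P$-a.s.\ equality is strictly weaker: for $\P=\delta_0$ one has $\langle w, X\rangle=0$ $\P$-a.s.\ for every $w$, while the ball contains perturbed measures under which $\langle w, X\rangle\neq 0$ with positive probability. The only way the quasi-sure conclusion can hold for the Wasserstein ball is $w=0$, and this is exactly what the paper proves: mixing an arbitrary point mass into $\P$ stays in the ball (Lemma \ref{lem:1}), so the hypothesis forces $\langle w, x\rangle\ge 0$ for every $x\in\R^d$, hence $w=0$, after which the q.s.\ conclusion is trivial. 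Your perturbation idea can be completed in the same spirit — if $w\neq0$ and $\langle w,X\rangle=0$ $\P$-a.s., translating a little mass by $-\varepsilon w/|w|$ again produces $\Q\in B_k(\P)$ with $\Q(\langle w,X\rangle<0)>0$, contradicting the hypothesis — but as written the proof establishes a weaker statement than the proposition.

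The construction itself also has two flaws. First, the inequality goes the wrong way: on $A$ you only know $\langle w,x\rangle\ge\delta$, so after the shift by $(\delta/|w|+\varepsilon)e$ you get $\langle w,T(x)\rangle\ge-\varepsilon|w|$, a \emph{lower} bound; if $\langle w,x\rangle$ is large on $A$, the image need not meet $\{\langle w,\cdot\rangle<0\}$ at all. You would need $A\subseteq\{\delta\le\langle w,X\rangle\le M\}$ and a shift exceeding $M/|w|$. Second, the atom case is not handled correctly: a deterministic map cannot split an atom, and shrinking $\delta$ and $\varepsilon$ does not help, because the shift needed to cross the hyperplane is at least $\langle w,x_0\rangle/|w|$, dictated by the atom's location, so the transport cost is at least $(\langle w,x_0\rangle/|w|)^p\,\P(\{x_0\})$, which exceeds $k^p$ when, say, $\P=\delta_{x_0}$ with $\langle w,x_0\rangle$ large and $k$ small. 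The remedy is precisely the paper's device in Lemma \ref{lem:1}: move only an arbitrarily small \emph{fraction} $\alpha$ of mass to a chosen point, i.e. use the mixture $(1-\alpha)\P+\alpha\delta_y$ (a coupling that splits mass, not a map), whose cost scales like $\alpha^{1/p}(C_\P+|y|)$ and can be made smaller than $k$ for any target $y$. With that replacement the case distinctions disappear and one recovers the paper's one-line argument.
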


\begin{proof}
Take $w\in \R^d$ such that $\langle w,X\rangle \ge 0$ $B_k(\P)\mathrm{-q.s.}$ 
Let $x\in \R^d.$ Lemma \ref{lem:1} below states that there exists $\alpha>0$ such that $\P_{x}= (1-\alpha)\P+\alpha \delta_{x}\in B_k(\P).$ 
Thus, as 
$
\langle w,X\rangle \ge 0$ $\P_{x}\text{-a.s.}$ 
holds,  we have that $\langle w, x\rangle\ge 0$. As $x\in \R^d$ was arbitrary, 
$w=0$ follows, showing the claim.
\end{proof}
We have used the following lemma:
\begin{lemma}\label{lem:1}
For any $\tilde{x}\in \R^d$  and $\alpha\in \left[0, \left(\frac{k}{C_{\P} +|\tilde{x}|}\right)^p\right)$ 
\begin{align*}
\P_{\tilde{x}}:= (1-\alpha)\P+\alpha \delta_{\tilde{x}}\in B_k(\P).
\end{align*}
\end{lemma}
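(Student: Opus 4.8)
The plan is to produce an explicit coupling of $\P$ and $\P_{\tilde x}$ and bound its transport cost by $k$. If $\alpha = 0$ then $\P_{\tilde x} = \P \in B_k(\P)$ and there is nothing to prove, so assume $0 < \alpha < (k/(C_{\P}+|\tilde x|))^p$.

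The coupling I would use keeps a $(1-\alpha)$-fraction of the mass of $\P$ in place and transports the remaining $\alpha$-fraction onto the atom at $\tilde x$. Concretely, let $\pi$ be the probability measure on $\R^d \times \R^d$ given by $\pi = (1-\alpha)\, T_{*}\P + \alpha\, S_{*}\P$, where $T(x) = (x,x)$ and $S(x) = (x,\tilde x)$; that $\pi$ is a probability measure is clear since $(1-\alpha) + \alpha = 1$. Checking the marginals: pushing $\pi$ forward under the first canonical projection gives $(1-\alpha)\P + \alpha\P = \P$, and under the second canonical projection gives $(1-\alpha)\P + \alpha\delta_{\tilde x} = \P_{\tilde x}$, so $\pi \in \Pi(\P,\P_{\tilde x})$.

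It then remains to estimate $\E_\pi[|X-Y|^p]$, where $X,Y$ are the canonical projections. On the part $T_*\P$ (concentrated on the diagonal) the integrand vanishes, while on $S_*\P$ it equals $|x-\tilde x|^p$, so
\[
\mathcal{W}_p(\P,\P_{\tilde x})^p \le \E_\pi[|X-Y|^p] = \alpha\, \E_{\P}[|X-\tilde x|^p].
\]
Minkowski's inequality in $L^p(\P)$ bounds $\E_{\P}[|X-\tilde x|^p]^{1/p} \le \E_{\P}[|X|^p]^{1/p} + |\tilde x| = C_{\P} + |\tilde x|$, whence
\[
\mathcal{W}_p(\P,\P_{\tilde x}) \le \alpha^{1/p}\,(C_{\P}+|\tilde x|) < k,
\]
where the last step uses $\alpha^{1/p} < k/(C_{\P}+|\tilde x|)$. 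Hence $\P_{\tilde x} \in B_k(\P)$.

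There is essentially no genuine obstacle here: the entire content lies in choosing the right coupling — moving exactly an $\alpha$-fraction of $\P$ onto $\delta_{\tilde x}$ and leaving the rest untouched — after which the estimate is immediate from Minkowski's inequality, and the precise threshold $(k/(C_{\P}+|\tilde x|))^p$ on $\alpha$ is dictated by exactly this computation.
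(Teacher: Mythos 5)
Your proof is correct and follows essentially the same route as the paper: the coupling $(1-\alpha)T_*\P+\alpha S_*\P$ you construct is exactly the paper's $(1-\alpha)f_*\P+\alpha(\P\otimes\delta_{\tilde x})$, and the cost estimate via Minkowski's inequality yielding $\alpha^{1/p}(C_{\P}+|\tilde x|)$ is identical. The explicit marginal check and the separate treatment of $\alpha=0$ are minor additions, not a different argument.
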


\begin{proof} Let $\alpha>0$ and  $f(x,y):=(x,x)$. Choosing the coupling 
$\pi:= (1-\alpha) f_*\P+ \alpha (\P\otimes \delta_{\tilde{x}})\in \Pi(\P, \P_{\tilde{x}})$ we see that 
\begin{align*}
\mathcal{W}_p(\P_{\tilde{x}},\P)\le \left(\alpha \E_{\P}[|X-\tilde{x}|^p]\right)^{1/p}\le \alpha^{1/p} ((\E_{\P}[|X|^p])^{1/p}+|\tilde{x}|)= \alpha^{1/p} (C_{\P}+|\tilde{x}|).
\end{align*}
Setting the last expression less or equal to $k$ and solving for $\alpha$ proves the claim.  
\end{proof}

\begin{definition}
For $\Q \in \mathcal{P}_p(\R^d)$ and $\mathcal{Q}\subseteq \mathcal{P}_p(\R^d)$ we define the support of $\Q$ and the quasi-sure support of $\mathcal{Q}$ 
\begin{align*}
\mathrm{supp}(\Q)&:=\bigcap  \left\{ A \subset \mathbb{R}^{d},\; \mathrm{closed}, \;\Q(A) =1\right\}, \\
\mathrm{supp}(\mathcal{Q}) &:=\bigcap\left\{ A \subset \mathbb{R}^{d},\; \mathrm{closed},\; \Q(A)=1 \; \forall \mathbb{Q} \in \mathcal{Q} \right\}.
\end{align*}
We denote the affine hull of a set $A\subseteq \R^d$ by $\mathrm{Aff}(A)$.
\end{definition}

\begin{lemma}
We have $\mathrm{supp}(B_k(\P))= \R^d.$ 
\end{lemma}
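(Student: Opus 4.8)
The plan is to show both inclusions $\mathrm{supp}(B_k(\P)) \subseteq \R^d$ (trivial) and $\R^d \subseteq \mathrm{supp}(B_k(\P))$, where the latter is the content. By definition, $\mathrm{supp}(B_k(\P))$ is the intersection of all closed sets $A$ with $\Q(A) = 1$ for every $\Q \in B_k(\P)$; so to prove $\mathrm{supp}(B_k(\P)) = \R^d$ it suffices to show that the only closed set $A$ with $\Q(A) = 1$ for all $\Q \in B_k(\P)$ is $A = \R^d$ itself, or equivalently, that for every $x \in \R^d$ and every closed set $A$ not containing $x$, there is a measure $\Q \in B_k(\P)$ with $\Q(A) < 1$.

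First I would fix $x \in \R^d$ and invoke Lemma \ref{lem:1}: taking $\tilde x = x$ and any $\alpha \in \bigl(0, (k/(C_\P + |x|))^p\bigr)$, the measure $\P_x := (1-\alpha)\P + \alpha \delta_x$ lies in $B_k(\P)$. Since $\alpha > 0$, we have $\P_x(\{x\}) \ge \alpha > 0$. Now suppose $A$ is any closed set with $\Q(A) = 1$ for all $\Q \in B_k(\P)$; in particular $\P_x(A) = 1$, hence $\P_x(A^c) = 0$, which forces $x \notin A^c$, i.e. $x \in A$. As $x \in \R^d$ was arbitrary, every such closed set $A$ equals $\R^d$, and therefore the intersection defining $\mathrm{supp}(B_k(\P))$ is $\R^d$.

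This argument is essentially immediate once Lemma \ref{lem:1} is available, so there is no real obstacle; the only point requiring a little care is the logical manipulation of the intersection defining the quasi-sure support — namely recognizing that to show the intersection is all of $\R^d$ one shows each point survives in every admissible closed set. I would phrase the proof compactly along exactly the lines above, perhaps noting explicitly that $\P_x \in B_k(\P)$ charges every neighbourhood of $x$ (indeed the singleton $\{x\}$) with positive mass, so $x$ cannot be excluded by any closed $\Q$-null complement.
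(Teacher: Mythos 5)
Your proof is correct and uses exactly the paper's idea: invoke Lemma \ref{lem:1} to place $\P_x=(1-\alpha)\P+\alpha\delta_x$ in $B_k(\P)$, so the atom at $x$ forces $x$ into every closed set of full measure under all of $B_k(\P)$. The paper phrases this via the inclusion $\mathrm{supp}(B_k(\P))\supseteq\bigcup_{x}\mathrm{supp}(\P_x)\supseteq\R^d$, which is just the definitional unwinding you carry out explicitly.
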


\begin{proof}
For any $x \in \R,$ taking the $\P_{x}\in B_k(\P)$ from Lemma \ref{lem:1}  we see that 
\begin{align*}
\mathrm{supp}(B_k(\P)) \supseteq \bigcup_{x\in \R^d}\text{supp}(\P_{x}) \supseteq \bigcup_{x\in \R^d} \{x\} =\R^d.
\end{align*}

\end{proof}

The next proposition asserts that there exists a measure $\P^\ast$ in the Wasserstein ball satisfying the no arbitrage condition  and having full affine hull support. The measure $\P^\ast$ will be a key ingredient in the proof of Theorem \ref{thm:main}. 

\begin{proposition}\label{prop:past}
There exists $\P^\ast\in B_k(\P)$ such that $\mathrm{Aff}(\mathrm{supp}(\P^\ast))=\R^d$ and NA$(\P^\ast)$ holds.
\end{proposition}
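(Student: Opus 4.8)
The plan is to obtain $\P^\ast$ as an arbitrarily small convex perturbation of $\P$ by a reference measure whose support is all of $\R^d$; such a perturbation is genuinely needed, since $\P\in B_k(\P)$ but $\P$ itself need not have full affine support nor satisfy $\mathrm{NA}(\P)$. Fix any $\nu\in\mathcal{P}_p(\R^d)$ with $\mathrm{supp}(\nu)=\R^d$ --- for instance the standard Gaussian $\mathcal{N}(0,I_d)$, which lies in $\mathcal{P}_p(\R^d)$ for every $p$ --- and, for $\alpha\in(0,1)$, set
\[
\P^{\alpha}:=(1-\alpha)\,\P+\alpha\,\nu .
\]
Then $\P^{\alpha}\in\mathcal{P}_p(\R^d)$ as a convex combination of elements of $\mathcal{P}_p(\R^d)$.

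First I would check that $\P^{\alpha}\in B_k(\P)$ once $\alpha$ is small enough. Mimicking the coupling in the proof of Lemma \ref{lem:1}, let $f(x):=(x,x)$ and $\pi:=(1-\alpha)f_*\P+\alpha(\P\otimes\nu)\in\Pi(\P,\P^{\alpha})$. Using $|x-y|^p\le 2^{p-1}(|x|^p+|y|^p)$ one obtains
\[
\mathcal{W}_p(\P^{\alpha},\P)^p\le \E_\pi[|X-Y|^p]=\alpha\,\E_{\P\otimes\nu}[|X-Y|^p]\le \alpha\,2^{p-1}\bigl(C_{\P}^p+\E_\nu[|X|^p]\bigr),
\]
which is at most $k^p$ as soon as $\alpha\le k^p/(2^{p-1}(C_{\P}^p+\E_\nu[|X|^p]))$. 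Fix such an $\alpha$ and set $\P^\ast:=\P^{\alpha}\in B_k(\P)$.

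It remains to verify the two required properties. Since $\alpha\in(0,1)$ we have $\mathrm{supp}(\P^\ast)=\mathrm{supp}(\P)\cup\mathrm{supp}(\nu)=\R^d$, hence $\mathrm{Aff}(\mathrm{supp}(\P^\ast))=\R^d$. For the no-arbitrage property, let $w\in\R^d$ with $\langle w,X\rangle\ge 0$ $\P^\ast$-a.s.; then the closed half-space $\{x\in\R^d:\langle w,x\rangle\ge 0\}$ has full $\P^\ast$-measure and therefore contains $\mathrm{supp}(\P^\ast)=\R^d$, which forces $w=0$ and hence $\langle w,X\rangle=0$ $\P^\ast$-a.s. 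Thus $\mathrm{NA}(\P^\ast)$ holds.

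An alternative construction, avoiding reference measures with full support, mixes $\P$ with Dirac masses at the vertices $v_0,\dots,v_d$ of a non-degenerate simplex centered at the origin (so the $v_j$ are affinely independent with $\sum_{j=0}^d v_j=0$): a finite-atom analogue of Lemma \ref{lem:1} keeps the mixture in $B_k(\P)$ for small mixing weights, the $v_j$ already have affine hull $\R^d$, and if $\langle w,X\rangle\ge 0$ holds $\P^\ast$-a.s. then $\langle w,v_j\rangle\ge 0$ for all $j$ while $\sum_{j=0}^d\langle w,v_j\rangle=\langle w,0\rangle=0$, so $\langle w,v_j\rangle=0$ for every $j$ and hence $w=0$. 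I do not expect a genuine obstacle in this proof: the only point needing any care is the apparent tension between perturbing $\P$ enough (to enlarge its support and destroy arbitrages) and little enough (to remain in $B_k(\P)$), but since $\mathcal{W}_p(\P^{\alpha},\P)=O(\alpha^{1/p})$ this tension is vacuous, the lone requirement being that the perturbing measure have finite $p$-th moment.
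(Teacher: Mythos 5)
Your main construction is correct and takes a genuinely different route from the paper. The paper perturbs $\P$ by Dirac masses: it sets $\P^\ast=\frac{1}{2d}\sum_{i=1}^d(\P_{-e_i}+\P_{e_i})$ with $\P_{\pm e_i}=(1-\alpha)\P+\alpha\delta_{\pm e_i}$ from Lemma \ref{lem:1}, uses convexity of $\mathcal{W}_p$ to stay in $B_k(\P)$, reads off $\mathrm{supp}(\P^\ast)\supseteq\{\pm e_i\}$ for the affine-hull claim, and obtains $\mathrm{NA}(\P^\ast)$ by citing the classical criterion that $0$ lies in the (relative) interior of the convex hull of the support. You instead mix $\P$ with a fixed full-support measure $\nu\in\mathcal{P}_p(\R^d)$; your coupling bound $\mathcal{W}_p(\P^\alpha,\P)^p\le\alpha\,2^{p-1}\bigl(C_\P^p+\E_\nu[|X|^p]\bigr)$ is valid, and since $\mathrm{supp}(\P^\ast)=\R^d$ you get the affine-hull property in a stronger form and $\mathrm{NA}(\P^\ast)$ by the elementary observation that a closed half-space of full measure must contain $\R^d$, with no external citation. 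What each approach buys: yours is self-contained and makes the no-arbitrage step trivial; the paper's finite-atom perturbation stays entirely within the machinery of Lemma \ref{lem:1} and, more importantly, its explicit form is reused later (the inclusion \eqref{eq:ei} is invoked in Lemma \ref{Rio} to force $w^\ast=0$, and the computation of $L^\ast$ in Proposition \ref{Kbound} exploits the explicit mixture with $\delta_{\pm e_i}$), so the specific choice is not incidental to the rest of the paper. Your sketched alternative with simplex vertices $v_0,\dots,v_d$ summing to $0$ is essentially the paper's argument, with the convex-hull criterion replaced by the explicit summation trick; it is also correct.
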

Proposition \ref{prop:past} is proved  in a one-period setup in \cite{Bay17}[Lemma 2.2] and in the multi-period one in \cite{BC19}[Theorem 3.29]. The proof in our context is very simple and is given for sake of completeness. 
\begin{proof}
Denote the $i$th unit vector by $e_i=(0, \dots,0,1,0,\dots,0)$ and consider the probability measure 
\begin{align*}
\P^\ast:=\frac{1}{2d} (\P_{-e_1}+\P_{e_1}+\P_{-e_2}+\dots +\P_{e_d}),
\end{align*}
where $\P_{\pm e_i}\in B_k(\P)$ is defined as in Lemma \ref{lem:1} for $\tilde{x}=\pm e_i$ and $\alpha\in \left(0, \left(\frac{k}{C_{\P} +1}\right)^p\right).$ 
By convexity of the Wasserstein distance (see e.g. \cite[Theorem 4.8]{villani2008optimal}) and $\P_{\pm e_i}\in B_k(\P)$ we conclude that $\P^\ast\in B_k(\P)$. Furthermore 
\begin{align}\label{eq:ei}
\text{supp}(\P^\ast)\supseteq \{\pm e_i: i=1, \dots, d\},
\end{align}
so that $\mathrm{Aff}(\mathrm{supp}(\P^\ast))=\R^d.$ As \eqref{eq:ei} also implies that $0$ belongs to the (relative) interior of the convex hull of $\text{supp}(\P^\ast)$,  
we conclude that NA$(\P^\ast)$ holds true, see \cite{JS98}[Theorem 3g)].
\end{proof}

\begin{lemma}
\label{Rio}
Let $\mathbb{P}^* $ be as in Proposition \ref{prop:past}. There exists $\beta^\ast\in (0,1]$ such that for any $w \in \mathbb{R}^d \setminus \{0\}$,
\begin{align}
\label{alphaone}
\mathbb{P}^* \left( \langle w ,X\rangle  < -\beta^\ast |w|\right) \geq  \beta^\ast.
\end{align}
\end{lemma}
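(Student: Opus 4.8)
The plan is to exploit the explicit atomic structure of $\mathbb{P}^\ast$ furnished by the construction in Proposition \ref{prop:past}. Recall that $\mathbb{P}^\ast = \frac{1}{2d}\bigl(\P_{-e_1}+\P_{e_1}+\dots+\P_{-e_d}+\P_{e_d}\bigr)$, where each $\P_{\pm e_i} = (1-\alpha)\P + \alpha\,\delta_{\pm e_i}$ for a \emph{fixed} $\alpha \in \bigl(0,(\frac{k}{C_\P+1})^p\bigr)$. Hence $\mathbb{P}^\ast$ assigns mass at least $\frac{\alpha}{2d}$ to each of the $2d$ points $\pm e_i$; the absolutely continuous part coming from $(1-\alpha)\P$ can only increase these masses and so plays no role in the lower bound we seek. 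This is the only structural input needed.

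By positive homogeneity of \eqref{alphaone} in $w$ (both the event and the constant scale linearly with $|w|$), it suffices to treat unit vectors, so assume $|w|=1$. Since $\sum_{i=1}^{d} w_i^2 = 1$, a pigeonhole argument gives an index $i_0$ with $|w_{i_0}| \ge 1/\sqrt{d}$. Set $\tilde{x} := -\mathrm{sgn}(w_{i_0})\,e_{i_0} \in \{-e_{i_0},\,e_{i_0}\}$, so that $\langle w,\tilde{x}\rangle = -|w_{i_0}| \le -1/\sqrt{d}$.

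Finally I would set $\beta^\ast := \min\{\tfrac{1}{2\sqrt{d}},\,\tfrac{\alpha}{2d}\}$, which lies in $(0,1]$ because $0<\alpha<1$ and $d\ge 1$, and which is independent of $w$. Then $\langle w,\tilde{x}\rangle \le -1/\sqrt{d} < -\beta^\ast = -\beta^\ast|w|$, so $\tilde{x}$ belongs to the event $\{\langle w,X\rangle < -\beta^\ast|w|\}$, whence
\[
\mathbb{P}^\ast\bigl(\langle w,X\rangle < -\beta^\ast|w|\bigr) \;\ge\; \mathbb{P}^\ast(\{\tilde{x}\}) \;\ge\; \frac{\alpha}{2d} \;\ge\; \beta^\ast .
\]
There is no genuine obstacle here; the only points requiring a little care are the reduction to unit vectors by scaling, the pigeonhole choice of coordinate, and picking $\beta^\ast$ small enough — and uniform in $w$ — that both the ``magnitude'' bound $1/\sqrt{d}$ on $\langle w,\tilde x\rangle$ and the ``mass'' bound $\alpha/(2d)$ on $\mathbb{P}^\ast(\{\tilde x\})$ are respected simultaneously.
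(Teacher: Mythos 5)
Your proof is correct, but it takes a genuinely different route from the paper. You exploit the explicit atomic structure of $\P^\ast$: each of the $2d$ points $\pm e_i$ carries mass at least $\alpha/(2d)$, and a pigeonhole/Cauchy--Schwarz argument on a unit vector $w$ produces an atom $\tilde x$ with $\langle w,\tilde x\rangle\le -1/\sqrt{d}$, so $\beta^\ast:=\min\{\tfrac{1}{2\sqrt d},\tfrac{\alpha}{2d}\}$ works; all steps check out (the homogeneity reduction, the strict inequality $-1/\sqrt d<-\beta^\ast$, and the mass bound), and your argument even yields an explicit, quantitative $\beta^\ast$. The paper instead argues softly: it defines the sets $A_n$ of unit vectors violating the bound at level $1/n$, assumes for contradiction that all are nonempty, extracts a convergent subsequence on the unit sphere, applies Fatou's lemma to get $\langle w^\ast,X\rangle\ge 0$ $\P^\ast$-a.s., and then invokes $\mathrm{NA}(\P^\ast)$ together with the fact that the support contains $\pm e_i$ to force $w^\ast=0$, a contradiction. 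The trade-off is that the paper's compactness argument uses only the qualitative properties of $\P^\ast$ (no arbitrage and full-dimensional support), so it is the template for the general ``quantitative no-arbitrage'' result it cites, whereas your proof is tied to the specific construction of $\P^\ast$ in Proposition \ref{prop:past} but is more elementary, avoids Fatou and the contradiction entirely, and makes the constant explicit. One phrasing to fix: the mass $(1-\alpha)\P$ inside each $\P_{\pm e_i}$ is not an ``absolutely continuous part'' in general ($\P$ may itself have atoms); it is simply additional nonnegative mass, which is all your lower bound needs.
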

Condition \ref{alphaone} is called the quantitative no arbitrage condition and was proved in \cite{BC19}[Proposition 3.35]. Again, the proof is easy in our context and is reported for sake of completeness. 
\begin{proof}
We introduce for $n\geq 1$
$$
A_{n}:=\left\{ w \in  \R^d:\; |w|=1,\ \mathbb{P}^* \left(\langle w ,X\rangle < -\frac1n\right) < \frac1n  \right\}
$$
 and set $n_{0}:=\inf\{n \geq 1:A_{n}=\emptyset\}$ 
with the convention that $\inf \emptyset=+\infty$.  We prove by
contradiction that  $n_{0}<\infty$.
Assume that $n_{0}=\infty$ and choose $w_{n} \in A_n$ for all $n\ge 1$.
By passing to a subsequence we can assume that $w_{n}$ tends to  some $w^{*}\in \R^d$ with $ |w^* |=1$. 
Then   $\{\langle w^{*} ,X \rangle <0 \} \subseteq \liminf_{n} \left\{ \langle w_n ,X\rangle <   -{1}/{n} \right\}$ and Fatou's Lemma implies that $\mathbb{P}^* \left(\langle w^{*} ,X\rangle <0\right)  \leq 0. $ 
So $\langle w^{*} ,X\rangle \geq 0$ $\mathbb{P}^* $-a.s. As NA$(\P^\ast)$ holds (see Proposition \ref{prop:past}), $\langle w^{*} ,X\rangle = 0$ $\mathbb{P}^* $-a.s. and  
 \eqref{eq:ei} implies that  $w^{*}=0$,
which contradicts $|w^*|=1$. Thus $n_{0}<\infty$ and we can set  $\beta^\ast={1}/{n_{0}} \in (0,1]$. By definition of $A_{n_{0}}$, \eqref{alphaone} holds true.
\end{proof}
\section{Discussion of Theorem \ref{thm:main} and its assumptions} \label{sec:discussion}
All proofs not stated in this section are given in Appendix \ref{sec:proofs}. 
The first lemma shows that $\lim_{k \to \infty} u(k,w) =-\infty
$  for all $w\in \R^d \setminus\{0\}$ and justifies why we study in Theorem  \ref{thm:main} 
$\lim_{k \to \infty}  \mathrm{arg\ max}_{w \in D} u(k,w)$ and not 
  $\mathrm{arg\ max}_{w \in D} \lim_{k \to \infty} u(k,w)$.
\begin{lemma}
\label{lem:-infty}
Assume that $\lim_{x\to -\infty} U(x)=-\infty$. Then 
$
\lim_{ k \to \infty}u(k,w)=-\infty
$ 
for all $w\in \R^d \setminus\{0\}$ satisfying $\E_{\P}[U(\x+\x\langle w,X\rangle)]<\infty$.
\end{lemma}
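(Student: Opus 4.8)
The plan is to bound the infimum defining $u(k,w)$ in \eqref{eq:prob} from above by its value at a single, explicitly chosen competitor $\Q_k\in B_k(\P)$: a measure obtained from $\P$ by adding a point mass far away in the direction that drives the portfolio value $\x+\x\langle w,X\rangle$ to $-\infty$. The perturbed measures $\P_{\tilde x}=(1-\alpha)\P+\alpha\delta_{\tilde x}$ of Lemma \ref{lem:1} are exactly what is needed. I would first dispose of a trivial case: since $\P\in B_k(\P)$, one always has $u(k,w)\le m_w:=\E_{\P}[U(\x+\x\langle w,X\rangle)]$, so if $m_w=-\infty$ the conclusion is immediate, and I may assume $m_w\in\R$.

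Next, fix $w\in\R^d\setminus\{0\}$ and choose the worst direction: let $v:=-\mathrm{sign}(\x)\,w/|w|$, a unit vector with $\x\langle w,v\rangle=-|\x|\,|w|<0$, so that $\x+\x\langle w,tv\rangle=\x-t\,|\x|\,|w|$ for every $t>0$. Applying Lemma \ref{lem:1} with $\tilde x=tv$ (hence $|\tilde x|=t$) and mixing weight $\alpha=\tfrac14$, the measure $\Q_k:=\tfrac34\P+\tfrac14\,\delta_{tv}$ lies in $B_k(\P)$ whenever $\tfrac14<\bigl(k/(C_{\P}+t)\bigr)^{p}$, i.e. whenever $C_{\P}+t<4^{1/p}k$. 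I would therefore set $t=t(k):=2^{1/p}k-C_{\P}$, which is positive and satisfies $t(k)\to+\infty$ as $k\to+\infty$ while keeping $\Q_k\in B_k(\P)$; testing \eqref{eq:prob} against $\Q_k$ then yields
\[
u(k,w)\ \le\ \E_{\Q_k}\bigl[U(\x+\x\langle w,X\rangle)\bigr]\ =\ \tfrac34\,m_w+\tfrac14\,U\bigl(\x-t(k)\,|\x|\,|w|\bigr)
\]
for all sufficiently large $k$.

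To conclude, note that $t(k)\to+\infty$ together with $|\x|\,|w|>0$ gives $\x-t(k)|\x||w|\to-\infty$, so the standing hypothesis $\lim_{x\to-\infty}U(x)=-\infty$ forces $U(\x-t(k)|\x||w|)\to-\infty$; since $m_w$ is finite, the right-hand side above tends to $-\infty$, and hence $\limsup_{k\to\infty}u(k,w)=-\infty$, which is the claim. The only delicate point — and it is a mild one — is that the mixing weight $\alpha$ must be kept bounded away from $0$ as the atom $tv$ escapes to infinity; this is possible precisely because the admissible range $\bigl[0,(k/(C_{\P}+|\tilde x|))^{p}\bigr)$ in Lemma \ref{lem:1} widens as $k\to\infty$, which is also the structural reason why the result is genuinely asymptotic in $k$. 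Beyond the assumed divergence of $U$ at $-\infty$, neither concavity nor monotonicity of $U$ enters the argument.
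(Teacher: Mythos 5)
Your proof is correct and follows essentially the same route as the paper: both test the infimum against a mixture $(1-\alpha)\P+\alpha\,\delta_{\tilde x}$ from Lemma \ref{lem:1} whose atom escapes to $-\infty$ along the direction $-\mathrm{sign}(\x)\,w$, with the mixing weight kept bounded away from $0$ so that the term $\alpha\, U(\x-c\,k)$ drives the bound to $-\infty$. The only differences are cosmetic: you fix $\alpha=\tfrac14$ and let the atom sit at distance $2^{1/p}k-C_{\P}$ (and treat the case $\E_{\P}[U(\x+\x\langle w,X\rangle)]=-\infty$ separately), whereas the paper places the atom at distance $k/|w|$ and uses a $k$-dependent weight $\alpha_{x_k}$ bounded below.
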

Let us  single out \eqref{mino} in the statement of Theorem \ref{thm:main}.
\begin{assumption}\label{ass:1}
There exists $\underline{x}>0$ and $C_1>0$ such that $U(-\underline{x})<0$ and 
\begin{align*}
U(x)\ge -C_1(1+|x|^p)  \qquad\text{ for all }x\le-\underline{x},
\end{align*}
\end{assumption}
The following lemma shows that Assumption \ref{ass:1} is necessary to guarantee that $u(k,w)$ is well-defined.
\begin{lemma}
\label{lem:ill}
Assume that $U$ is non-decreasing and $\lim_{x\to -\infty} U(x)=-\infty$. If Assumption \ref{ass:1} is not satisfied, then $u(k,w)=-\infty$ 
for all $w\in \R^d \setminus\{0\}$ satisfying $\E_{\P}[U(\x+\x\langle w,X\rangle)]<\infty$.
\end{lemma}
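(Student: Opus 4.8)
The plan is to show that when Assumption \ref{ass:1} fails, one can construct measures $\Q_n \in B_k(\P)$ along which $\E_{\Q_n}[U(\x+\x\langle w,X\rangle)] \to -\infty$, so that the infimum defining $u(k,w)$ is $-\infty$. Fix $w \neq 0$ with $\E_{\P}[U(\x+\x\langle w,X\rangle)]<\infty$. The failure of Assumption \ref{ass:1}, given that $U$ is non-decreasing with $\lim_{x\to-\infty}U(x)=-\infty$ (hence $U(-\underline x)<0$ holds automatically for $\underline x$ large), means precisely that for every constant $C>0$ there exist points $x$ arbitrarily far out in the left tail with $U(x) < -C(1+|x|^p)$; equivalently $\limsup_{x\to-\infty} \frac{-U(x)}{1+|x|^p} = +\infty$. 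So I would pick a sequence $x_n \to -\infty$ with $-U(x_n) \ge n(1+|x_n|^p)$.

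The core construction uses Lemma \ref{lem:1}: I would perturb $\P$ by placing a small atom of mass $\alpha_n$ at a point $\tilde x_n \in \R^d$ chosen so that $\x + \x\langle w, \tilde x_n\rangle = x_n$, i.e. on a line where the portfolio value is very negative. Concretely, since $w\neq 0$, pick $\tilde x_n := \lambda_n \frac{w}{|w|^2}$ with $\lambda_n$ chosen so that $\langle w,\tilde x_n\rangle = \lambda_n$, giving $\x+\x\lambda_n = x_n$; note $|\tilde x_n| = |\lambda_n|/|w|$ grows like $|x_n|$. By Lemma \ref{lem:1}, $\Q_n := (1-\alpha_n)\P + \alpha_n \delta_{\tilde x_n} \in B_k(\P)$ provided $\alpha_n < \big(k/(C_\P + |\tilde x_n|)\big)^p$. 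Then
\begin{align*}
\E_{\Q_n}[U(\x+\x\langle w,X\rangle)] = (1-\alpha_n)\E_{\P}[U(\x+\x\langle w,X\rangle)] + \alpha_n U(x_n).
\end{align*}
The first term is bounded (it is at most the finite number $\E_{\P}[U(\x+\x\langle w,X\rangle)]$, and bounded below by a constant as $\alpha_n \to 0$ using the monotone/integrable structure — more simply it converges to $\E_{\P}[U(\x+\x\langle w,X\rangle)]$). The second term is $\le -\alpha_n \, n (1+|x_n|^p)$. The constraint forces $\alpha_n$ to decay roughly like $|\tilde x_n|^{-p} \sim |x_n|^{-p}$, so $\alpha_n n(1+|x_n|^p) \sim n \cdot \text{const}$, which tends to $+\infty$; hence $\alpha_n U(x_n) \to -\infty$ and so $\E_{\Q_n}[U(\x+\x\langle w,X\rangle)] \to -\infty$, giving $u(k,w) = -\infty$.

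The main obstacle is the bookkeeping that makes the two competing limits balance: the atom mass $\alpha_n$ must vanish fast enough to stay in $B_k(\P)$ but the utility at $x_n$ must blow up fast enough (superlinearly in $n$ relative to $1/\alpha_n$) to dominate. This is exactly why one extracts the sequence $x_n$ with the extra factor $n$: choosing $\alpha_n := \tfrac12 (k/(C_\P+|\tilde x_n|))^p$, one has $\alpha_n (1+|x_n|^p)$ bounded below by a positive constant (since $|\tilde x_n| = |x_n - \x|/(|\x|\,|w|)$ is comparable to $|x_n|$), so $\alpha_n|U(x_n)| \ge \alpha_n n (1+|x_n|^p) \ge cn \to \infty$. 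A minor technical point is ensuring the $\P$-part does not itself drift to $-\infty$; since $\alpha_n\to 0$ and $\E_\P[U(\x+\x\langle w,X\rangle)]<\infty$, dominated/monotone convergence (using that $U$ is bounded above on the relevant half-line by its value at a point, or directly $U(\x+\x\langle w,X\rangle) \le U(\x+\x\langle w,X\rangle)^+ + $ const with the positive part integrable) handles this cleanly. I would also remark that this recovers, as a converse companion, the role of \eqref{mino} in Theorem \ref{thm:main}: without it the objective is identically $-\infty$, so $w_k$ is not even well-defined.
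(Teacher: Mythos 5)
Your proposal is correct and follows essentially the same route as the paper's proof: extract a sequence $x_n\to-\infty$ with $U(x_n)\le -n(1+|x_n|^p)$ from the failure of Assumption \ref{ass:1}, place an atom of mass $\alpha_n\asymp \bigl(k/(C_{\P}+|\tilde x_n|)\bigr)^p$ at $\tilde x_n=\frac{w}{|w|^2}\,\frac{x_n-\x}{\x}$ (admissible by Lemma \ref{lem:1}), and observe that $\alpha_n\, n(1+|x_n|^p)\to\infty$ while the baseline term stays bounded above. The only cosmetic fix is to bound that baseline term by $\max\bigl(0,\E_{\P}[U(\x+\x\langle w,X\rangle)]\bigr)$ (an upper bound is all that is needed, even if the expectation is negative or $-\infty$), which is exactly what the paper does.
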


We continue our discussion with a few remarks on the assumptions on asymptotic elasticity.  From an economics point of view, asymptotic elasticity can be interpreted as the ratio of the marginal utility $U'(x)$ and average utility,  $\frac{U(x)}{x}.$  
In the classical continuous time case of a fixed probability measure $\P,$ a utility function with domain equal to $\R$ must have a Reasonable Asymptotic Elasticity (RAE) at $\pm \infty$, i.e.
\begin{align*}
\liminf_{x\to -\infty} \frac{x U'(x)}{U(x)}>1 \quad \text{and}\quad \limsup_{x\to +\infty} \frac{x U'(x)}{U(x)}<1,
\end{align*}
in order to guarantee that the utility maximisation problem admits a solution, see \cite{Schach01}.
Having a RAE at $+ \infty$ means that the marginal utility should be smaller than the average utility for large positive financial positions, while a RAE at $- \infty$ means that marginal utility should be larger than the average utility for large negative positions. 
In discrete time it is enough to postulate RAE at $+ \infty$ or $-\infty$, see \cite{RS05}. 
RAE at $+ \infty$ or $-\infty$ provides control on $U$ at $+ \infty$ or $-\infty$ and concavity provides the other, see  Lemma \ref{lem:bc2}. 
In the case of multiple priors it is known that these conditions are not sufficient, see \cite{BC16} and \cite{CF23}. 

We now state a number of implications of concavity and RAE. 
\begin{lemma}
\label{lem:bc2}
Assume that $U$ is concave, non-decreasing and differentiable and that there exist positive constants $\underline{x}$ and $\overline{x}$ such that 
$U(-\underline x)<0$ and  $U(\overline x)>0$. Then
for all $x\in \R$ and $\lambda\geq 1$
\begin{align}
\label{bajka--}
U(\lambda x) & \leq  \lambda( U(x) + C),
\end{align}
where $C:=U^-(-\underline x) + U^+(\overline x)>0$. 
\end{lemma}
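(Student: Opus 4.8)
The plan is to derive the bound from a single application of concavity at a cleverly chosen point, followed by an elementary monotonicity estimate to control the resulting zero-order term. First I would dispose of the case $\lambda=1$, where the inequality reads $U(x)\le U(x)+C$ and is immediate once we know $C>0$; and $C>0$ is clear because $U(-\underline x)<0$ forces $U^-(-\underline x)>0$ while $U^+(\overline x)\ge 0$ by definition of the positive part. So from now on assume $\lambda>1$ and fix $x\in\R$.

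The key observation is that $x$ always lies on the segment joining $\lambda x$ and $0$, \emph{irrespective of the sign of $x$}: one has the identity
\[
x=\tfrac{1}{\lambda}(\lambda x)+\bigl(1-\tfrac{1}{\lambda}\bigr)\cdot 0,
\]
with weights $\tfrac1\lambda,\,1-\tfrac1\lambda\in[0,1]$ since $\lambda\ge 1$. Applying concavity of $U$ at this convex combination yields $U(x)\ge \tfrac1\lambda U(\lambda x)+\bigl(1-\tfrac1\lambda\bigr)U(0)$, which I would rearrange into
\[
U(\lambda x)\le \lambda U(x)-(\lambda-1)U(0).
\]
Note this already handles $x\ge 0$ and $x<0$ uniformly, and differentiability is not actually used here.

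It then remains to absorb the term $-(\lambda-1)U(0)$ into $\lambda C$. For this I would use that $U$ is non-decreasing, hence $U^-$ is non-increasing, and that $-\underline x\le 0$; this gives $-U(0)\le U^-(0)\le U^-(-\underline x)\le C$, and therefore $-(\lambda-1)U(0)\le(\lambda-1)C\le\lambda C$, the last step because $C\ge 0$. Substituting back into the displayed inequality gives $U(\lambda x)\le \lambda U(x)+\lambda C=\lambda\bigl(U(x)+C\bigr)$, which is the claim.

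The argument is short, and the only mild subtlety is the bookkeeping around the sign of $U(0)$: the constant $C$ must dominate $-U(0)$, and this is exactly the point where monotonicity of $U$ (equivalently, of $U^-$) is invoked together with $\underline x>0$. There is no real obstacle beyond choosing the right convex combination — once one writes $x$ as above, concavity does essentially all the work in one line.
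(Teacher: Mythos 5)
Your proof is correct. The paper itself omits the argument, deferring to Lemma 5.1 of \cite{CF23}, where the standard route is to use concavity through the derivative (bounds of the type $xU'(x)\le U(x)-U(0)$) and/or a case distinction on the sign and size of $x$; your single convex-combination identity $x=\tfrac1\lambda(\lambda x)+(1-\tfrac1\lambda)\cdot 0$ does the same job in one stroke, uniformly in the sign of $x$, and rearranges directly to $U(\lambda x)\le \lambda U(x)-(\lambda-1)U(0)$. The only remaining point is exactly the one you address: $-U(0)\le U^-(0)\le U^-(-\underline x)\le C$ by monotonicity of $U$ and $-\underline x\le 0$, so the zero-order term is absorbed into $\lambda C$. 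Note that your argument uses neither differentiability nor $U(\overline x)>0$ (the latter only enters through the definition of $C$ and the trivial case $\lambda=1$, where $C>0$ already follows from $U(-\underline x)<0$), so in fact you prove the inequality under slightly weaker hypotheses than stated — which is harmless here, and arguably cleaner than the cited proof.
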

\begin{proof} 
The proof is similar to the proof of \cite{CF23}[Lemma 5.1] and thus omitted. 
\end{proof}

\begin{lemma}\label{lem:schach}
Assume that $U$ is concave, non-decreasing and differentiable. 
Assume that $\mbox{AE}_{+\infty}(U)<1$ and $\lim_{x\to \infty} U(x) >0$. Then there exist constants $\overline{x}>0$ and $\gamma <1$ 
such that $U(\overline{x})>0$ and for all $x\geq \overline{x}$  and $\lambda\geq 1$
\begin{align}\label{bajka+}
U(\lambda x) & \leq  \lambda^{\gamma} U(x). 
\end{align}
Moreover,  $\mbox{AE}_{+\infty}(U)$ is the infimum of ${\gamma}$ such that \eqref{bajka+} holds true. \\
Assume that  $\mbox{AE}_{-\infty}(U)>1$.  Then there exist constants $\underline{x}>0$ and $\gamma\rq{} >1$ 
such that $U(-\underline{x})<0$ and for all $x\leq - \underline{x}$  and $\lambda\geq 1$
\begin{align}\label{bajka-}
U(\lambda x) & \leq  \lambda^{\gamma\rq{}} U(x).
\end{align}
Moreover,  $\mbox{AE}_{-\infty}(U)$ is the supremum of $\gamma\rq{}$ such that \eqref{bajka-} holds true. 
\end{lemma}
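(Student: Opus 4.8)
The statement to prove is Lemma \ref{lem:schach}, which translates the asymptotic elasticity conditions $\mbox{AE}_{+\infty}(U)<1$ and $\mbox{AE}_{-\infty}(U)>1$ into the power-type growth bounds \eqref{bajka+} and \eqref{bajka-}, together with the characterization of $\mbox{AE}_{\pm\infty}(U)$ as an infimum/supremum of admissible exponents. This is a classical statement in the spirit of \cite{Schach01, RS05}, so the plan is to imitate those arguments while being careful about the fact that $U$ is defined on all of $\R$ and may change sign.

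\textit{Proof proposal.} I would treat the two halves symmetrically, so let me focus on \eqref{bajka+}; the other is analogous after reflecting $x\mapsto -x$ and reversing the relevant inequalities. Since $\mbox{AE}_{+\infty}(U)=\limsup_{x\to+\infty} xU'(x)/U(x)<1$, I would first fix $\gamma$ with $\mbox{AE}_{+\infty}(U)<\gamma<1$, and then pick $\overline{x}>0$ large enough that (a) $U(\overline{x})>0$ (possible since $\lim_{x\to\infty}U(x)>0$ and $U$ is non-decreasing, hence $U>0$ eventually), and (b) $xU'(x)/U(x)\le \gamma$ for all $x\ge \overline{x}$ — for (b) I need $U(x)>0$ on $[\overline{x},\infty)$, which holds by the same token, so the ratio is well-defined and the $\limsup$ bound gives (b) after possibly enlarging $\overline{x}$. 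The key computation is then: for $x\ge\overline{x}$, consider $g(\lambda):=\ln U(\lambda x)-\gamma\ln\lambda$ for $\lambda\ge 1$. Then $g'(\lambda)=\frac{xU'(\lambda x)}{U(\lambda x)}-\frac{\gamma}{\lambda}=\frac1\lambda\left(\frac{\lambda x U'(\lambda x)}{U(\lambda x)}-\gamma\right)\le 0$ since $\lambda x\ge\overline{x}$. Hence $g$ is non-increasing, so $g(\lambda)\le g(1)$, i.e. $\ln U(\lambda x)-\gamma\ln\lambda\le\ln U(x)$, which is exactly $U(\lambda x)\le\lambda^\gamma U(x)$. (One should note $U>0$ throughout the region to make the logarithms legitimate; differentiability of $U$ gives differentiability of $g$.)

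For the characterization — that $\mbox{AE}_{+\infty}(U)$ is the infimum of $\gamma$ for which \eqref{bajka+} holds for some $\overline{x}$ — the ``$\ge$'' direction is what I just proved (every $\gamma>\mbox{AE}_{+\infty}(U)$ is admissible). For ``$\le$'', I would suppose \eqref{bajka+} holds for some $\gamma$ and $\overline{x}$ and derive $\mbox{AE}_{+\infty}(U)\le\gamma$: rewriting \eqref{bajka+} as $\frac{U(\lambda x)-U(x)}{\lambda x - x}\le \frac{(\lambda^\gamma-1)U(x)}{(\lambda-1)x}$ for $x\ge\overline x$, $\lambda>1$, and letting $\lambda\downarrow 1$, the left side tends to $U'(x)$ (or at least the right-derivative; but $U$ is differentiable) while the right side tends to $\frac{\gamma U(x)}{x}$, giving $xU'(x)/U(x)\le\gamma$ for all $x\ge\overline x$, hence $\mbox{AE}_{+\infty}(U)\le\gamma$. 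Taking the infimum over admissible $\gamma$ finishes it. The $-\infty$ statement is handled the same way: set $\gamma'$ with $1<\gamma'<\mbox{AE}_{-\infty}(U)$, pick $\underline x$ with $U(-\underline x)<0$ and $xU'(x)/U(x)\ge\gamma'$ for $x\le-\underline x$ (note $U<0$ there, so dividing by $U(x)$ flips the inequality appropriately), and run the analogous monotonicity argument on $h(\lambda):=\ln(-U(\lambda x))-\gamma'\ln\lambda$ for $x\le-\underline x$, using $-U>0$ on that half-line.

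The main subtlety — the only place real care is needed — is the sign bookkeeping near the origin and the fact that the ratio $xU'(x)/U(x)$ is only meaningful where $U$ has a fixed sign; this is precisely why the hypotheses $\lim_{x\to\infty}U(x)>0$ and $U(-\underline x)<0$ (equivalently $\lim_{x\to-\infty}U(x)=-\infty$, using concavity, non-decrease cannot force $U\to-\infty$ but $U(-\underline x)<0$ plus concavity does) are imposed: they guarantee that $[\overline x,\infty)$ lies in $\{U>0\}$ and $(-\infty,-\underline x]$ in $\{U<0\}$, so all logarithms above are well-defined and all the inequality manipulations keep their direction. Everything else is the routine Gronwall-type / logarithmic-derivative argument of \cite{Schach01}, and since the paper already says the proof of the closely related Lemma \ref{lem:bc2} is ``similar to \cite{CF23}'' and omitted, I would expect to cite \cite{Schach01, RS05, CF23} and present only the short computation above.
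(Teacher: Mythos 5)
Your argument is correct and is essentially the proof the paper points to: the paper disposes of this lemma by citing \cite{KS99}[Lemma 6.3] and \cite{CF23}[Proposition 5.3], whose proofs are exactly your logarithmic-derivative/monotonicity computation for \eqref{bajka+}--\eqref{bajka-} together with the difference-quotient argument for the infimum/supremum characterization. The only spot to tighten is the existence of $\underline{x}$ with $U(-\underline{x})<0$ under the sole hypothesis $\mbox{AE}_{-\infty}(U)>1$: this is immediate because for $x<0$ one has $xU'(x)\le 0$, so a ratio $xU'(x)/U(x)$ exceeding $1$ forces $U(x)<0$ for all sufficiently negative $x$ --- a cleaner justification than the concavity remark in your final parenthetical.
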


\begin{proof}
The first assertion is proved in  \cite{KS99}[Lemma 6.3]. The second one can be proved similarly, see also \cite{CF23}[Proposition 5.3]. 
\end{proof}

\begin{lemma}\label{lem:bc2}
Assume that $U$ is concave, non-decreasing and differentiable. Assume that either $\mbox{AE}_{+\infty}(U)<1$ and $\lim_{x\to \infty}U(x)>0$ or 
$\mbox{AE}_{-\infty}(U)>1$. \\
Then there exist some strictly positive constants $\underline{x},\, \overline{x}, \,\underline{\gamma},\, \overline{\gamma}$ satisfying $\underline{\gamma}\leq 1 \leq \overline{\gamma}$ and $\underline{\gamma}<\overline{\gamma}$ and such that 
$U(-\underline{x})<0$ and  $U(\overline x)>0$ and 
for all $x\in \R$ and $\lambda\ge 1$
\begin{align}\label{conc}
U(\lambda x) & \leq  \lambda^{\underline{\gamma}}  \left(U\left(x \right)+{C}\right) \quad \mbox{ and } \quad
U(\lambda x)  \leq  \lambda^{\overline{\gamma}}  \left(U\left(x \right)+{C}\right),
\end{align}
where $C:=U^-(-\underline x) + U^+(\overline x)>0$. 
\end{lemma}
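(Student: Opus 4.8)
The plan is to obtain \eqref{conc} by combining Lemma~\ref{lem:schach}, which controls $U$ near $+\infty$ under condition~(iii) and near $-\infty$ under condition~(ii), with the linear estimate \eqref{bajka--} of the preceding lemma. The key observation is that \eqref{bajka--} is already one of the two inequalities in \eqref{conc} --- the one carrying the exponent $1$ --- so only the other inequality requires an argument, and that argument amounts to upgrading the one-sided power bound furnished by Lemma~\ref{lem:schach} to an estimate valid for all $x\in\R$. Throughout I would assume $\lim_{x\to\infty}U(x)>0$ (this is part of condition~(iii), and in case~(ii) the complementary situation means $U$ is bounded above, i.e.\ condition~(i)), so that there is $\overline x>0$ with $U(\overline x)>0$; likewise, since $U$ is concave, non-decreasing and non-constant, $U'$ is non-negative, non-increasing and positive at some point $x_0$, so $U(x_0)-U(x)\ge U'(x_0)(x_0-x)\to\infty$ as $x\to-\infty$, whence $U(x)\to-\infty$ and there is $\underline x>0$ with $U(-\underline x)<0$.

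Next I would fix the exponents. In case~(iii) I take $\overline x$ and $\gamma\in(0,1)$ from Lemma~\ref{lem:schach}, so that \eqref{bajka+} holds, and set $\underline\gamma:=\gamma$, $\overline\gamma:=1$. In case~(ii) I take $\underline x$ and $\gamma'>1$ from Lemma~\ref{lem:schach}, so that \eqref{bajka-} holds, and set $\underline\gamma:=1$, $\overline\gamma:=\gamma'$. In both cases $0<\underline\gamma\le1\le\overline\gamma$ and $\underline\gamma<\overline\gamma$, and with $C:=U^-(-\underline x)+U^+(\overline x)>0$ the inequality in \eqref{conc} carrying the exponent $1$ is exactly \eqref{bajka--} (which applies since $U(-\underline x)<0$ and $U(\overline x)>0$). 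It then remains to prove, for all $x\in\R$ and $\lambda\ge1$, the estimate $U(\lambda x)\le\lambda^{\overline\gamma}(U(x)+C)$ in case~(ii) and $U(\lambda x)\le\lambda^{\underline\gamma}(U(x)+C)$ in case~(iii).

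Case~(ii) is short: if $U(x)+C\ge0$, then $U(\lambda x)\le\lambda(U(x)+C)\le\lambda^{\overline\gamma}(U(x)+C)$ by \eqref{bajka--} and $\overline\gamma\ge1$; if $U(x)+C<0$, then $U(x)<-C\le U(-\underline x)$, so monotonicity of $U$ forces $x<-\underline x$, and \eqref{bajka-} gives $U(\lambda x)\le\lambda^{\gamma'}U(x)\le\lambda^{\overline\gamma}(U(x)+C)$ since $C\ge0$. For case~(iii) I would distinguish $x<0$ and $x\ge0$. For $x<0$ one has $\lambda x\le x$, hence $U(\lambda x)\le U(x)$: if $U(x)+C\ge0$ this yields $U(\lambda x)\le U(x)+C\le\lambda^{\underline\gamma}(U(x)+C)$, and if $U(x)+C<0$ then $U(\lambda x)\le\lambda(U(x)+C)\le\lambda^{\underline\gamma}(U(x)+C)$ by \eqref{bajka--} and $\underline\gamma\le1$. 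For $x\ge0$ one first checks that $U(x)+C\ge U(\overline x)>0$ (since $x\ge-\underline x$ gives $U(x)+U^-(-\underline x)\ge0$). If $\lambda x\le\overline x$, then $U(\lambda x)\le U(\overline x)\le U(x)+C\le\lambda^{\underline\gamma}(U(x)+C)$; if $\lambda x>\overline x$ and $x\ge\overline x$, then \eqref{bajka+} gives $U(\lambda x)\le\lambda^{\gamma}U(x)\le\lambda^{\underline\gamma}(U(x)+C)$; and if $\lambda x>\overline x$ but $0\le x<\overline x$, then writing $\lambda x=(\lambda x/\overline x)\,\overline x$ with $\lambda x/\overline x>1$ and applying \eqref{bajka+} at the point $\overline x$ gives $U(\lambda x)\le(\lambda x/\overline x)^{\gamma}U(\overline x)\le\lambda^{\gamma}U(\overline x)\le\lambda^{\underline\gamma}(U(x)+C)$, using $(x/\overline x)^{\gamma}\le1$ and $U(\overline x)\le U(x)+C$. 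This exhausts all cases.

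The main difficulty is the last subcase of case~(iii), namely $x\ge0$ with $\gamma<1$: there \eqref{bajka--} is useless (because $\lambda^{\underline\gamma}\le\lambda$ and $U(x)+C\ge0$ make its right-hand side smaller than the linear one), while \eqref{bajka+} only applies on $[\overline x,\infty)$, so one genuinely has to rescale the argument down to the threshold $\overline x$ and absorb the surplus factor $(x/\overline x)^{\gamma}\le1$, which is where positivity of $\gamma$ is used. Everything else is bookkeeping with monotonicity of $U$ and the definition of $C$.
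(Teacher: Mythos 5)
Your proof is correct and follows essentially the same route as the paper: the exponent-$1$ inequality in \eqref{conc} is taken directly from \eqref{bajka--}, while the power inequality is obtained by patching \eqref{bajka+} (resp.\ \eqref{bajka-}) from Lemma~\ref{lem:schach} over the whole real line through a case analysis that uses monotonicity of $U$, the definition of $C$, and \eqref{bajka--} when $U(x)+C<0$. The only differences are cosmetic: in case (ii) you split on the sign of $U(x)+C$ and reuse \eqref{bajka--} where the paper treats the regions $x\ge 0$ and $-\underline{x}\le x\le 0$ by a separate concavity chord estimate and a monotonicity computation, and you make explicit (via non-constancy of $U$ and $\lim_{x\to\infty}U(x)>0$) the existence of $\underline{x}$ with $U(-\underline{x})<0$ and of $\overline{x}$ with $U(\overline{x})>0$, which the paper's argument also uses but leaves implicit.
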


\begin{corollary}\label{cor:bc1}
Assume that $U$ is non decreasing and satisfies one of the inequalities in \eqref{conc}. Set $\hat  \gamma=\underline{\gamma}$  if the left hand side of \eqref{conc} holds true and 
$\hat \gamma=\overline{\gamma}$  if the right hand side of \eqref{conc} holds true. Then there exists some $\hat C>0$ such that 
\begin{align}
\label{eq:acp}U(x)  \leq \hat C(|x|^{\hat  \gamma}+1) \quad \mbox{for all } x\in \R.
\end{align}
Assume that there exist  constants $\underline{x}>0$ and $\gamma\rq{} >1$ 
such that $U(-\underline{x})<0$ and for all $x\leq - \underline{x}$  and $\lambda\geq 1,$ 
$U(\lambda x)  \leq  \lambda^{\gamma\rq{}} U(x).$ 
Then there exists some $C\rq{}>0$ such that 
\begin{align}
\label{eq:aep-}
U(x)  \leq -C\rq{}|x|^{\gamma\rq{}} \mbox{for all }x\le -\underline{x}.
\end{align}
However, the converses do not hold.
\end{corollary}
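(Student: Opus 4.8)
The statement to prove is Corollary~\ref{cor:bc1}, which has two parts plus the remark that the converses fail.

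\medskip

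\noindent\textbf{Plan for the first implication, \eqref{eq:acp}.}
The plan is to exploit the inequality in \eqref{conc} by choosing the scaling factor $\lambda$ to normalize the argument. Suppose the left-hand inequality holds with $\hat\gamma=\underline\gamma$ (the right-hand case with $\hat\gamma=\overline\gamma$ is identical). For $x$ with $|x|\ge 1$, I would write $x=\lambda\tilde x$ where $\lambda=|x|\ge 1$ and $\tilde x = x/|x|\in\{-1,+1\}$, so that \eqref{conc} gives
\begin{align*}
U(x)\le |x|^{\hat\gamma}\bigl(U(x/|x|)+C\bigr)\le |x|^{\hat\gamma}\bigl(\max\{U(1),U(-1)\}+C\bigr).
\end{align*}
Since $U$ is non-decreasing, $\max\{U(1),U(-1)\}=U(1)$, a finite constant. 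For $|x|<1$, again by monotonicity $U(x)\le U(1)$. Combining the two regimes, $U(x)\le \hat C(|x|^{\hat\gamma}+1)$ with $\hat C:=\max\{U(1)+C,\,U(1)\}$, say, or any convenient choice making both bounds hold. This is routine; the only mild care needed is handling the sign of $x/|x|$ and the region $|x|<1$ separately.

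\medskip

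\noindent\textbf{Plan for the second implication, \eqref{eq:aep-}.}
Here I am given $U(\lambda x)\le \lambda^{\gamma'}U(x)$ for all $x\le-\underline x$, $\lambda\ge1$, with $U(-\underline x)<0$. The idea is the same rescaling trick but now the sign works in my favour because $U(-\underline x)<0$. For $x\le-\underline x$, set $\lambda=|x|/\underline x=-x/\underline x\ge 1$ and note $x=\lambda\cdot(-\underline x)$. Then
\begin{align*}
U(x)=U(\lambda\cdot(-\underline x))\le \lambda^{\gamma'}U(-\underline x)=\Bigl(\frac{|x|}{\underline x}\Bigr)^{\gamma'}U(-\underline x)=-C'|x|^{\gamma'},
\end{align*}
where $C':=-U(-\underline x)/\underline x^{\gamma'}>0$. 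This gives exactly \eqref{eq:aep-}. The key point is that multiplying the negative quantity $U(-\underline x)$ by the positive power $\lambda^{\gamma'}$ preserves the upper bound direction.

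\medskip

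\noindent\textbf{Plan for the non-converse remark and anticipated obstacle.}
To see that the converses fail, I would exhibit simple counterexamples. For the failure of the converse to \eqref{eq:acp}: a function satisfying the polynomial growth bound $U(x)\le\hat C(|x|^{\hat\gamma}+1)$ need not satisfy the homogeneity-type inequality \eqref{conc}; for instance one can take a concave non-decreasing $U$ that oscillates (within the polynomial envelope) in its local behaviour near $-\infty$, or more simply observe that \eqref{conc} constrains the \emph{shape} (a submultiplicativity in the dilation parameter) while \eqref{eq:acp} only constrains the size, and a function like $U(x)=|x|^{\hat\gamma}\mathds{1}_{\{x<0\}}\cdot(\text{bounded oscillation})$ can be arranged to violate the former. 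For \eqref{eq:aep-}: a function with $U(x)\le -C'|x|^{\gamma'}$ near $-\infty$ need not satisfy $U(\lambda x)\le\lambda^{\gamma'}U(x)$ for \emph{all} $\lambda\ge1$ and all $x\le-\underline x$ simultaneously — one can perturb $-|x|^{\gamma'}$ by a lower-order term that breaks the exact scaling. I expect the main (modest) obstacle to be producing a clean, fully-concave and non-decreasing counterexample rather than an ad hoc one, since concavity is quite rigid; a safe route is to build the counterexample piecewise-linearly and check concavity by monotonicity of slopes. Given that the rest of the corollary is elementary, I would keep the non-converse discussion brief, perhaps relegating the explicit examples to a one-line sketch, since they are not used elsewhere in the paper.
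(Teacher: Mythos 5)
Your proof is correct and follows essentially the same route as the paper: the same rescaling trick of choosing $\lambda$ so that the argument of $U$ reduces to a fixed base point (you use $\pm 1$ where the paper uses $\overline{x}$, and $-\underline{x}$ in the second part exactly as the paper does), with monotonicity handling small $|x|$; just take the final constant positive, e.g.\ $\hat C=\max\{U(1)+C,\,U(1),\,1\}$, so that $\hat C>0$ as required. The paper likewise gives no formal construction for the claim that the converses fail, so your brief counterexample sketch matches its level of detail.
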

\begin{proof}
We prove that \eqref{eq:acp} holds true. 
Let $x\geq \overline{x}$. Choosing $\lambda=x/\overline{x} \geq 1$ in \eqref{conc} 
$$U(x)\leq \left(\frac{x}{\overline{x}} \right)^{\hat \gamma} (U(\overline{x})+C)= |x|^{\hat \gamma} \frac{2U(\overline{x})+U^-(-\underline x)}{\overline{x}^{\hat \gamma}}.$$
If $x\leq \overline{x}$ then $U(x)\leq U(\overline{x}). $
Thus trivially, the first assertion holds true for $\hat C:= (2U(\overline{x})+U^-(-\underline x))/\overline{x}^{\hat \gamma}+U(\overline{x}).$\\
For the second assertion let $x\leq - \underline{x}$. Choosing $\lambda=-x/\underline{x} \geq 1$ in $U(\lambda x)  \leq  \lambda^{\gamma\rq{}} U(x)$ 
$$U(x)\leq \left(\frac{-x}{\underline{x}} \right)^{\gamma\rq{}} U(-\underline{x})= -|x|^{\gamma\rq{}} \frac{U^-(-\underline x)}{\underline{x}^{\gamma\rq{}}}$$
and we set $C\rq{}:={U^-(-\underline x)}/{\underline{x}^{\gamma\rq{}}}.$
\end{proof}

\begin{remark}
\label{rem2}
If  $\mbox{AE}_{-\infty}(U)>1$ and Assumption \ref{ass:1} holds true, i.e.   for all $x\leq - \underline{x},$   
$U(x)  \ge -C_1(1+|x|^p)$, Lemma \ref{lem:schach} and  \eqref{eq:aep-} imply that $\gamma' \leq p$ and taking {supremum} over $\gamma'$ we conclude that 
$$1<\mbox{AE}_{-\infty}(U)\le p.$$
Thus in case (ii) of Theorem \ref{thm:main}, we must have that $\mbox{AE}_{-\infty}(U)\in (1,p]$.
\end{remark}

\section{Existence of bounded optimal solutions of $u(\x)$} \label{sec:existence}

We are now in a position to show that under the assumptions made in Theorem \ref{thm:main}, we can restrict to weights $w$ in a compact set $K\subseteq \R^d$ and that $
w_k=\mathrm{arg\ max}_{w \in  D} u(k,w)
$ exists. 

\subsection{Existence of bounded optimal solutions under RAE}

The following lemma holds true without assuming that $U$ is concave.

\begin{proposition}\label{Kbound}
Assume that $U$ is non-decreasing and that there exist strictly positive constants $x^*,\, C,\, \underline{\gamma},\, \overline{\gamma}$ satisfying $\underline{\gamma}\leq 1 \leq \overline{\gamma}$ and $\underline{\gamma}<\overline{\gamma}$ and such that 
$U(-x^*)\leq -(1+ C)$ and 
for all $x\in \R$ and $\lambda\ge 1$
\begin{align}\label{concl}
U(\lambda x) & \leq  \lambda^{\underline{\gamma}}  \left(U\left(x \right)+C\right) \quad \mbox{ and } \quad 
U(\lambda x)  \leq  \lambda^{\overline{\gamma}}  \left(U\left(x \right)+C\right).
\end{align}
Finally assume that there exists some $w^\ast \in D$ such that 
$$\sup_{\mathbb{Q} \in B_{k}(\mathbb{P} )} \mathbb{E}_{\mathbb{Q}}\left[U^-(x_0+ x_0 \langle w^\ast, X \rangle)\right] <\infty.$$
\noindent 
Then there exists $K=K(x_0,w^\ast)>0$, such that $|w^\ast|\leq K$ and 
\begin{align}
\label{eq:okae-}
u(\x)=\sup_{ w\in D,\  | w |\leq K}u(k,w).
\end{align}
\end{proposition}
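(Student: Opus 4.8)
The plan is to show that the map $w \mapsto u(k,w)$ tends to $-\infty$ as $|w| \to \infty$ along $D$, uniformly enough that the supremum in \eqref{eq:valeur} is attained (or at least approached) on a bounded set. First I would fix the reference measure $\mathbb{P}^\ast \in B_k(\mathbb{P})$ from Proposition \ref{prop:past} together with the quantitative no-arbitrage constant $\beta^\ast \in (0,1]$ from Lemma \ref{Rio}. For any $w \in \R^d \setminus \{0\}$ and any $\mathbb{Q} \in B_k(\mathbb{P})$ we have $u(k,w) \le \mathbb{E}_{\mathbb{Q}}[U(\x + \x\langle w, X\rangle)]$; specializing to $\mathbb{Q} = \mathbb{P}^\ast$ and using \eqref{alphaone}, on the event $\{\langle w, X\rangle < -\beta^\ast |w|\}$ (which has $\mathbb{P}^\ast$-probability at least $\beta^\ast$) the argument $\x + \x\langle w, X\rangle$ is either very negative (if $\x > 0$) or very positive (if $\x < 0$). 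I would treat the two sign cases of $\x$ in parallel: in one case the negativity is controlled from above by \eqref{eq:acp} applied with the relevant exponent, and in the other by the growth bound coming from \eqref{concl}; in both cases one gets $u(k,w) \le \beta^\ast \cdot (\text{something} \to -\infty \text{ as } |w|\to\infty) + \mathbb{E}_{\mathbb{P}^\ast}[U^+(\dots)]$ on the complementary event.

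The key estimate is then to bound $\mathbb{E}_{\mathbb{P}^\ast}[U^+(\x + \x\langle w, X\rangle)]$ from above by something growing only \emph{polynomially} in $|w|$ — specifically like $|w|^{\overline{\gamma}}$ via \eqref{eq:acp} and the fact that $\E_{\mathbb{P}^\ast}[|X|^p] < \infty$ (note $\overline{\gamma}$ may exceed $p$, so one must be slightly careful; but since we only need $U^+$ and the bound $U(x) \le \hat C(|x|^{\hat\gamma}+1)$, this is where I would invoke the hypothesis $\E_{\mathbb{P}^\ast}[|X|^{\overline\gamma}] $ or rather handle it through Hölder/Jensen — actually the cleanest route uses that by Corollary \ref{cor:bc1} we have $U(x) \le \hat C(|x|^{\hat\gamma}+1)$ and then $\E_{\mathbb{P}^\ast}[(1+|\langle w,X\rangle|)^{\hat\gamma}]$ is finite and grows polynomially in $|w|$). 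Meanwhile the negative contribution on $\{\langle w,X\rangle < -\beta^\ast|w|\}$ is at most $\beta^\ast U(\x - |\x|\beta^\ast|w|)$ (using monotonicity of $U$ and the sign of $\x$), and by the lower growth control — here one uses that $U$ at $-\infty$ behaves at least like $-C|x|^{\underline\gamma}$ is \emph{not} what we want; rather we want an \emph{upper} bound on $U$ at $-\infty$, and \eqref{conc}/\eqref{concl} with exponent $\overline\gamma > 1$ gives $U(\lambda x) \le \lambda^{\overline\gamma}(U(x)+C)$, so choosing $x = -x^\ast$ and $\lambda = |\x|\beta^\ast|w|/x^\ast$ yields $U(\x - |\x|\beta^\ast|w|) \le -c|w|^{\overline\gamma}$ for large $|w|$ since $U(-x^\ast) \le -(1+C) < -C$. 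Comparing the two: the negative term dominates like $-c|w|^{\overline\gamma}$ while the positive term is only $O(|w|^{\overline\gamma})$ — so this naive comparison is delicate, and one must instead use the \emph{two} exponents in \eqref{concl}: bound $U^+$ using the $\underline\gamma \le 1$ inequality (so $U^+$ grows \emph{sublinearly}, like $|w|^{\underline\gamma}$) and bound the negative term from above using the $\overline\gamma$ inequality (so it decreases like $-|w|^{\overline\gamma}$ with $\overline\gamma > 1$). Then the negative term genuinely wins.

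Assembling: there is a constant $K_0$ such that for $|w| \ge K_0$, $w \in D$, one has $u(k,w) \le u(\x) - 1$ (say), using that $u(\x) \ge u(k,w^\ast) > -\infty$ by the integrability hypothesis on $w^\ast$ together with \eqref{concl} to control $\E_{\mathbb{Q}}[U^-(\x+\x\langle w^\ast,X\rangle)]$ uniformly over $\mathbb{Q} \in B_k(\mathbb{P})$ — indeed $u(k,w^\ast) \ge -\sup_{\mathbb{Q}} \E_{\mathbb{Q}}[U^-(\x+\x\langle w^\ast,X\rangle)] > -\infty$. Setting $K := \max(K_0, |w^\ast|)$ gives \eqref{eq:okae-}. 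I would also check separately that $|w^\ast| \le K$ is automatic by this choice and that the case $w = 0$ (if $0 \in D$) is trivially included. \emph{Main obstacle:} the bookkeeping around which of the two exponents $\underline\gamma, \overline\gamma$ to apply to the positive versus negative parts, and handling both signs of $\x$; in particular one must make sure that when $\x < 0$ the "bad" event for no-arbitrage pushes the wealth to $+\infty$ and there the relevant control is the $\underline\gamma$-bound on $U^+$ from \eqref{concl}, so that $u(k,w)$ is bounded above by the (sublinear in $|w|$) positive contribution minus a term that still must be shown to diverge — this requires a separate argument, namely using a \emph{different} adverse measure $\mathbb{Q}$ (e.g. one from Lemma \ref{lem:1} concentrated to push $\langle w, X\rangle$ negative) rather than $\mathbb{P}^\ast$. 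I expect the cleanest writeup picks, for each $w$, an adverse measure tailored to the sign of $\x$ so that the wealth is driven to $-\infty$, making $U$'s behavior at $-\infty$ (controlled above by \eqref{concl} with exponent $\overline\gamma$) do the work, and only needs the polynomial upper bound \eqref{eq:acp} on the complementary event.
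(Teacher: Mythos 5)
Your proposal follows essentially the paper's own route: bound $u(k,w)\le \E_{\P^\ast}[U(\x+\x\langle w,X\rangle)]$ with $\P^\ast$ from Proposition \ref{prop:past}, split off $U^+$ and control it through the $\underline{\gamma}$-inequality in \eqref{concl} (growth of order $|w|^{\underline{\gamma}}$, integrable since $\underline{\gamma}\le 1\le p$), drive the wealth below $-x^\ast$ on a quantitative no-arbitrage event and apply the $\overline{\gamma}$-inequality there to get a term of order $-|w|^{\overline{\gamma}}$ (the paper gets $-|w|^{\eta\overline{\gamma}}$), and finally compare with $u(k,w^\ast)\ge -\sup_{\Q\in B_k(\P)}\E_\Q[U^-(\x+\x\langle w^\ast,X\rangle)]>-\infty$ to produce $K=\max(K_0,K_1,|w^\ast|)$. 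The one point where you hesitate --- the sign of $\x$ --- does not actually require a separate argument or a different adverse measure: the paper takes the bad event $B=\{\mathrm{sign}(\x)\langle w,X\rangle<-\beta^\ast|w|\}$, and since Lemma \ref{Rio} applies to the nonzero vector $\mathrm{sign}(\x)w$, one has $\P^\ast(B)\ge\beta^\ast$ and on $B$ the wealth satisfies $\x+\x\langle w,X\rangle\le|\x|(1-\beta^\ast|w|)$ for either sign of $\x$. That said, your fallback device is itself a complete alternative: taking $\P_{\tilde x}$ from Lemma \ref{lem:1} with $\tilde x=-\mathrm{sign}(\x)\,w/|w|$ (so $|\tilde x|=1$ and $\alpha$ can be chosen uniformly in $w$) gives $u(k,w)\le (1-\alpha)\,C|w|^{\underline{\gamma}}-\alpha\, c\,|w|^{\overline{\gamma}}$ for both signs of $\x$, without invoking $\P^\ast$ or the quantitative no-arbitrage condition at all. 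Two minor points: the paper's extra exponent $\eta\in(0,1)$ with $\underline{\gamma}<\eta\overline{\gamma}$ is there only so that the rescaled wealth on $B$ can be pushed below $-x^\ast$ even when $x^\ast>\beta^\ast$; your direct scaling from $-x^\ast$ (choosing $\lambda\ge 1$ with $-\lambda x^\ast$ equal to the wealth bound) achieves the same with the full exponent $\overline{\gamma}$, and either works since only $\underline{\gamma}<\overline{\gamma}$ is needed. In the final assembly, compare with $u(k,w^\ast)$ (finite by hypothesis) rather than with ``$u(\x)-1$'', since finiteness of $u(\x)$ is not known at that stage; as you also invoke $u(k,w^\ast)>-\infty$, this is a cosmetic repair.
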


\begin{proof}
We show the following: there exists $K=K(x_0, w^\ast)>0$, such that $|w^\ast|\leq K$ and 
for all $w \in D $ with $| w |\geq K,$
\begin{align}
\label{petitjoli}
u(k,w)=\inf_{\mathbb{Q} \in B_{k}(\mathbb{P} )} \mathbb{E}_{\mathbb{Q}}\left[U(x_0+ x_0 \langle w, X \rangle)\right] \leq \inf_{\mathbb{Q} \in B_{k}(\mathbb{P} )} \mathbb{E}_{\mathbb{Q}}\left[U(x_0+ x_0 \langle w^\ast, X \rangle)\right]=u(k,w^\ast).
\end{align} 
The statement of the proposition then follows immediately. The proof of \eqref{petitjoli} follows ideas of \cite{RS05} in the monoprior case and \cite[Proposition 3.24]{BC16} in the robust case. Let $$\P^\ast=\frac{1}{2d} (\P_{-e_1}+\P_{e_1}+\P_{-e_2}+\dots +\P_{e_d}) \in B_k(\P)$$ be as in Proposition \ref{prop:past}, where we choose 
$\alpha:=\alpha_{\pm e_i}= (\frac{k}{2(C_{\P}+1)})^p$ 
 for each $\P_{\pm e_i}$ (recall the construction in Lemma \ref{lem:1}). We calculate
\begin{align}\label{duracell}\nonumber
L^*&:= \mathbb{E}_{\P^\ast}\left[U^+\left(1+| X | \right)\right] =\frac{1}{2d}\sum_{i=1}^d 2\left(\left(1- \alpha \right)\mathbb{E}_{\P}\left[U^+\left(1+| X | \right)\right]  + \alpha  U^+(2)\right)\\
& = \left(1- \alpha \right)\mathbb{E}_{\P}\left[U^+\left(1+| X | \right)\right]  + \alpha  U^+(2)<\infty.
\end{align}
Indeed,  using \eqref{eq:acp}
\begin{align}\label{eq:aec}
U^+\left(1+| X | \right) & \leq \hat C(1+ |1+X|^{\underline \gamma}) \mathds{1}_{\{U(1+| X |) \ge 0\}}  \le \hat C (2+ |X|^{ \underline \gamma})
\end{align}
and as $ \underline \gamma<1 \leq p$ and  $\E_{\P}[|X|^{ \underline \gamma}]\leq1+\E_{\P}[|X|^p]=1+C_{\P}^p<\infty.$ 

As  $\underline{\gamma}<\overline{\gamma}$ there exists some  $0<\eta<1$ such that  $\underline{\gamma}<\eta \overline{\gamma}.$ Fix some $ x_0 \in \mathbb{R}$ and take $w \in \R^d$ such that  
$$| w |\geq K_0 (x_0):=\max \left(1,\frac1{|x_0 |^{\frac{1}{\eta}}}, \frac1{\beta^*} \left(1 + \frac{x^*}{|x_0|} \right),
\left(\frac{1+x^*}{\beta^*}\right)^{{\frac{1}{1-\eta}}}
 \right).$$
Let $\underline \lambda:=|x_0| | w |$ and $\overline \lambda: = |x_0|| w |^{\eta}.$ Then, %
$\underline \lambda=|x_0| | w |\geq  \overline \lambda= |x_0| | w |^{\eta}\geq 1.$ 

We start with the estimation of $U^+.$ 
Using that $U$ is non-decreasing, applying the Cauchy-Schwarz inequality and then \eqref{concl}, we get that 
\begin{align*}
 & U(x_0+ x_0 \langle w, X \rangle)  \leq  U(|x_0|+ |x_0| |w||X| ) = U\left(\underline\lambda \left(\frac{|x_0|}{\underline \lambda}+ \frac{|x_0|}{\underline \lambda} |w||X | \right)\right) \\ 
 & \quad \le \underline \lambda ^{\underline \gamma} \left( U\left(\frac{|x_0|}{\underline \lambda}+ \frac{|x_0|}{\underline \lambda} | w|| X | \right)+ C \right)
  \leq |x_0 |^{\underline \gamma} | w |^{\underline \gamma} \left( U^+\left(1+| X | \right) + 
 C  \right), 
\end{align*}
as  $| w | \ge 1$. Thus 
\begin{align}\label{eq:laurence4}
\begin{split}
\mathbb{E}_{\P^\ast}\left[
\mathds{1}_{\{U(x_0+ x_0 \langle w, X \rangle) \geq 0\}}
U(x_0+ x_0 \langle w, X \rangle)
\right]
&\le |x_0 |^{\underline \gamma} | w |^{\underline \gamma} \left( L^* + 
  C  \right), 
\end{split}
\end{align}
For the estimation of $U^-$ we introduce the event
\begin{eqnarray}
\label{Bset}
B:=\left\{ \mbox{sign} (x_0)\langle w , X \rangle < -\beta^*\right| w |\},
\end{eqnarray}
where $\mbox{sign} (x)=+1$ if $x\geq 0$ and  $\mbox{sign} (x)=-1$ if $x<0.$ \\
Recall that  $\overline \lambda \ge 1$.  
We start with inequalities on $B$. Recall that $w\neq0$. 
\begin{align}
\label{eq:laurence1}
\begin{split}
\x+ \x \langle w, X \rangle &\leq \overline \lambda \left( \frac1{| w |^{\eta}}+ \frac{\mbox{sign} (x_0)}{| w |^{\eta}}\langle w , X \rangle \right) \leq \overline \lambda\left( \frac1{| w |^{\eta}}- \beta^* | w |^{1-\eta} \right).
\end{split}
\end{align} 
As $|w| \geq 1$ and $\eta>0$, $|w|^{\eta} \geq 1$ and 
$ \frac1{| w |^{\eta}}- \beta^* | w |^{1-\eta} \leq 1 -  \beta^* | w |^{1-\eta}.$ As we have assumed that $|w|\geq \left(\frac{1+x^*}{\beta^*}\right)^{{\frac{1}{1-\eta}}}$, 
we get that 
$$\frac1{| w |^{\eta}}- \beta^* | w |^{1-\eta} \leq -x^*.$$
So using  that $U$ is increasing, \eqref{eq:laurence1} and \eqref{concl}, we get that 
\begin{align*}
&U(x_0+ x_0 \langle w, X \rangle)\mathds{1}_{B}
 {\le}  U\left(\overline \lambda\left( \frac1{| w |^{\eta}}- \beta^* | w |^{1-\eta} \right)\right)\mathds{1}_{B}\\
  &  \quad \le  |x_0|^{\overline \gamma} | w |^{\eta \overline \gamma} \left(U\left(\frac1{| w |^{\eta}}- \beta^* | w |^{1-\eta}\right) + C \right)\mathds{1}_{B}  \le  |x_0|^{\overline \gamma} | w |^{\eta \overline \gamma} \left( U(-x^*)+ C \right)\mathds{1}_{B}. 
\end{align*}
\noindent In conclusion, recalling that $U(-x^*)\leq -(1+ C),$ we obtain that 
\begin{align}\label{eq:laurence6}
\begin{split}
  U(x_0+ x_0 \langle w, X \rangle)\mathds{1}_{B} & \le   - |x_0|^{\overline \gamma} | w |^{\eta \overline \gamma} \mathds{1}_{B}.
  \end{split}
\end{align}
Now, we show that  $B \subseteq \{\x+ \x \langle w, X \rangle \leq - x^*\}.$
Indeed, on $B$ we have that 
\begin{align*}
\x+ \x \langle w, X \rangle &\leq  |x_0| \left( 1+ \mbox{sign} (x_0)\langle w , X \rangle \right) \le |x_0| \left(1- \beta^* | w |\right) \le -x^*,
 \end{align*}
 as we have assumed that $|w| \geq \frac1{\beta^*} \left(1 + \frac{x^*}{|x_0|} \right)$. 
 This shows the claim. Clearly then $B\subseteq \{U(x_0+ x_0 \langle w, X \rangle)< 0\}$ follows, as $U$ is non decreasing and $U(-x^*)<0$. 
 So recalling \eqref{eq:laurence6} 
\begin{align*}
U(x_0+ x_0 \langle w, X \rangle)\mathds{1}_{\{U(x_0+ x_0 \langle w, X \rangle)< 0\}} & \le  - |x_0|^{\overline \gamma} | w |^{\eta \overline \gamma} \mathds{1}_{B}.
\end{align*}
Recalling \eqref{alphaone} and that $w\neq0$, we have $\P^\ast(B )>\beta^*$. Combining these with the estimate \eqref{eq:laurence4}, we have for $| w |\geq K_0(x_0)$
\begin{align*}
u(k,w) \leq \mathbb{E}_{\P^\ast}\left[U(x_0+ x_0 \langle w, X \rangle)\right] & \leq |x_0 |^{\underline \gamma} | w |^{\underline \gamma} \left( L^* + 
 C  \right)   -{\beta^*} |x_0|^{\overline \gamma} | w |^{\eta \overline \gamma}.
\end{align*}
Thus, \eqref{petitjoli} holds, if 
\begin{align*}
& |x_0 |^{\underline \gamma} | w |^{\underline \gamma} \left( L^* + 
  C  \right) -\frac{\beta^*}2|x_0|^{\overline \gamma} | w |^{\eta \overline \gamma}  \leq  0\\
 & -\frac{\beta^*}2|x_0|^{\overline \gamma} | w |^{\eta \overline \gamma}    \leq -\sup_{\mathbb{Q} \in B_{k}(\mathbb{P} )} \mathbb{E}_{\mathbb{Q}}\left[U^-(x_0+ x_0 \langle w^\ast, X \rangle)\right] \leq u(k,w^\ast).
\end{align*}
As $\eta \overline{\gamma} -\underline{\gamma}>0$, this holds true if $|w| \geq K_1(x_0, w^\ast)$, where 
\begin{align*}
K_1(x_0, w^\ast):=\max
\left(
\Bigg( \frac{2\left( L^* +   C \right)}{\beta^*|x_0|^{\overline \gamma-\underline \gamma}} \right)&^{\frac1{\eta \overline{\gamma} -\underline{\gamma}}},\\
&\left( \frac{2\sup_{\mathbb{Q} \in B_{k}(\mathbb{P} )} \mathbb{E}_{\mathbb{Q}}\left[U^-(x_0+ x_0 \langle w^\ast, X \rangle)\right] 
  }{\beta^*| x_0 |^{\overline\gamma}} \right)^{  \frac{1}{\eta \overline\gamma}}
\Bigg).
\end{align*}
We define $K(x_0,w^\ast):= \max\left(K_0(x_0),K_1(x_0, w^\ast),|w^\ast|\right).$ This shows \eqref{petitjoli} and concludes the proof.
\end{proof}

\begin{corollary}\label{cor:rae-ex}
In the setting of Proposition \ref{Kbound} we have the following: if $U$ is continuous, $
w_k=\mathrm{arg\ max}_{w \in  D} u(k,w)
$ exists, $w_k \in  D$  and $|w_k|\leq K.$ 
\end{corollary}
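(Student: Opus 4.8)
The plan is to reduce the claim to the classical fact that an upper semicontinuous function on a nonempty compact set attains its maximum, using the reduction \eqref{eq:okae-} from Proposition \ref{Kbound} to pass from $D$ to a compact subset. Concretely, I would set $D_K:=D\cap\{w\in\R^d:|w|\le K\}$ with $K=K(\x,w^\ast)$ as in Proposition \ref{Kbound}. Since $D$ is closed and $\{|w|\le K\}$ is compact, $D_K$ is compact, and it is nonempty because $w^\ast\in D$ with $|w^\ast|\le K$. By \eqref{eq:okae-} we have $u(\x)=\sup_{w\in D_K}u(k,w)$, so it suffices to show that $w\mapsto u(k,w)$ is upper semicontinuous on $D_K$: then it attains its supremum at some $w_k\in D_K\subseteq D$ with $|w_k|\le K$, and $u(k,w_k)=u(\x)=\sup_{w\in D}u(k,w)$, i.e.\ $w_k$ is the asserted $\mathrm{arg\ max}$.

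For the upper semicontinuity I would argue as follows. Fix $\Q\in B_k(\P)$. Since both inequalities in \eqref{concl} hold, Corollary \ref{cor:bc1} applies with $\hat\gamma=\underline\gamma$ and gives a constant $\hat C$ with $U(y)\le\hat C(1+|y|^{\underline\gamma})$ for all $y\in\R$. As $\underline\gamma\le1$, the elementary inequality $(a+b)^{\underline\gamma}\le a^{\underline\gamma}+b^{\underline\gamma}$ together with Cauchy--Schwarz yields, uniformly over $|w|\le K$,
\begin{align*}
U(\x+\x\langle w,X\rangle)\le \hat C\big(1+|\x|^{\underline\gamma}+(|\x|K)^{\underline\gamma}|X|^{\underline\gamma}\big)=:g,
\end{align*}
and $\E_\Q[g]<\infty$ because $\Q\in\mathcal{P}_p(\R^d)$ and $\underline\gamma\le p$, so $\E_\Q[|X|^{\underline\gamma}]\le1+\E_\Q[|X|^p]<\infty$. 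Hence, for $w_n\to w$ in $D_K$, continuity of $U$ gives $U(\x+\x\langle w_n,X\rangle)\to U(\x+\x\langle w,X\rangle)$ pointwise, and the reverse Fatou lemma (with dominating function $g$) yields $\limsup_n\E_\Q[U(\x+\x\langle w_n,X\rangle)]\le\E_\Q[U(\x+\x\langle w,X\rangle)]$. Thus each $w\mapsto\E_\Q[U(\x+\x\langle w,X\rangle)]$ is upper semicontinuous on $D_K$, and therefore so is the infimum $u(k,\cdot)=\inf_{\Q\in B_k(\P)}\E_\Q[U(\x+\x\langle\cdot,X\rangle)]$.

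Finally I would invoke the Weierstrass extreme value theorem for upper semicontinuous functions: $u(k,\cdot)$ attains its maximum on the nonempty compact set $D_K$, and this maximum is $>-\infty$ since $u(k,w^\ast)\ge-\sup_{\Q\in B_k(\P)}\E_\Q[U^-(\x+\x\langle w^\ast,X\rangle)]>-\infty$ by the standing hypothesis of Proposition \ref{Kbound}; this produces $w_k$ with the stated properties. I expect the only genuine obstacle to be the upper-semicontinuity step, and within it the point that one needs a single $\Q$-integrable dominating function valid for all $w$ in the ball $\{|w|\le K\}$ at once; this is exactly what the polynomial growth bound \eqref{eq:acp} with exponent $\underline\gamma\le p$ supplies, after which interchanging limit and integral via reverse Fatou and the stability of upper semicontinuity under infima are routine.
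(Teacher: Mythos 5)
Your proposal is correct and takes essentially the same approach as the paper: reduce to the compact set $\{w\in D:\ |w|\le K\}$ via \eqref{eq:okae-}, dominate $U$ from above uniformly in $|w|\le K$ by an integrable function obtained from the growth bound \eqref{eq:acp} with exponent $\underline{\gamma}\le p$, pass to the limit for each fixed $\Q$ (reverse Fatou), and only then take the infimum over $\Q\in B_k(\P)$. The paper simply unrolls your ``upper semicontinuity plus Weierstrass'' packaging into an explicit maximizing-sequence argument, splitting $U$ into $U^+$ (dominated convergence) and $U^-$ (Fatou), which is the same analytic content.
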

\begin{proof}
Take a sequence of maximizers $(w_n)_n$ of 
\begin{align*}
u(\x)= \sup_{w \in D,\, | w |\leq K}
\inf_{\mathbb{Q} \in B_{k}(\mathbb{P} )} \mathbb{E}_{\mathbb{Q}}\left[U(x_0+ x_0 \langle w, X \rangle)\right],
\end{align*}
see \eqref{eq:okae-}. 
Then, as $|w_n|\leq K$,  after taking a subsequence $\lim_{n\to \infty} w_n=\hat{w},$ $|\hat{w}|\leq K$ and   $ \hat{w}\in  D$, as $D$ is closed. 
Then, for any $\Q\in B_k(\P)$,
\begin{align*}
u(\x)
&= \limsup_{n\to \infty} \inf_{\Q\in B_k(\P)} \E_{\Q}[U(\x+\x\langle w_n, X\rangle)]\\
&\le \limsup_{n\to \infty}\E_{\Q}[U^+(\x+\x\langle w_n, X\rangle)] -\liminf_{n\to \infty}\E_{\Q}[U^-(\x+\x\langle w_n, X\rangle)]\\
& \le\limsup_{n\to \infty}\E_{\Q}[U^+(\x+\x\langle w_n, X\rangle)] -\E_{\Q}[U^-(\x+\x\langle \hat{w}, X\rangle)],
\end{align*}
applying the Fatou lemma for $U^-$ which is continuous. 
Now using \eqref{eq:acp}  and the Cauchy-Schwarz inequality  we get that 
\begin{align*}
U^+(\x+\x\langle w_n, X\rangle))  & = (U(\x+\x\langle w_n, X\rangle) \mathds{1}_{U(\x+\x\langle w_n, X\rangle) \ge 0}\\
& \leq  \hat C (|\x+\x\langle w_n, X\rangle|^{\underline{\gamma}} + 1) \mathds{1}_{U(\x+\x\langle w_n, X\rangle) \ge 0}\\
& \leq \hat C(2^{\underline{\gamma}-1} |\x|^{\underline{\gamma}}(1+ K^{\underline{\gamma}}|X|^{\underline{\gamma}}) +1).
\end{align*}
Recall that $\underline{\gamma}<1 \leq p$ and that $\E_{\Q}[|X|^p]<\infty$ for all $\Q\in B_k(\P),$ see  \eqref{triangle}. So, 
$\E_{\Q}[|X|^{\underline{\gamma}}]\leq1+\E_{\Q}[|X|^p]<\infty.$ The dominated convergence theorem together with continuity of $U^+$ shows that 
$$\limsup_{n\to \infty}\E_{\Q}[U^+(\x+\x\langle w_n, X\rangle)] =\E_{\Q}[U^+(\x+\x\langle \hat{w}, X\rangle)].$$
Thus, $u(\x)\leq \E_{\Q}[U(\x+\x\langle \hat{w}, X\rangle)].$ 
Taking the infimum over all  $\Q\in B_k(\P)$, we find that 
$u(\x) 
\le \inf_{\mathbb{Q} \in B_{k}(\mathbb{P} )} \E_{\Q}[U(\x+\x\langle \hat{w}, X\rangle)] \le u(x_0)
$
as $\hat{w} \in D$ and $\hat{w}$ is an optimizer for $u(x_0)=\max_{w \in  D} u(k,w)$.
\end{proof}

\subsection{Existence of optimal solutions when $U$ is bounded from above}
\begin{lemma}
\label{lem:bounded}
Assume that $U$ is non-decreasing, continuous and bounded from above, as well as $\lim_{x\to -\infty} U(x)=-\infty.$ 
We also assume that there exists some $w^\ast \in  D$ such that $u(k, w^\ast)>-\infty$.  
Then there exists $K=K(x_0,w^\ast)>0$, such that $|w^\ast| \leq K$ and 
\begin{align}
\label{eq:okb}
u(\x)=\sup_{w \in D,  | w |\leq K }u(k,w).
\end{align}
Furthermore, $
w_k=\mathrm{arg\ max}_{w \in  D} u(k,w)
$ exists, $w_k \in  D$ and $|w_k|\leq K.$ 
\end{lemma}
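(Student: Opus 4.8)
The plan is to mirror the two-step argument used for Proposition~\ref{Kbound} and Corollary~\ref{cor:rae-ex}, exploiting that boundedness of $U$ from above replaces the role played there by the elasticity/concavity bounds \eqref{concl}. \textbf{Step 1} shows that $u(k,w)\to-\infty$ as $|w|\to\infty$, so that the supremum in \eqref{eq:valeur} can be restricted to a compact set containing $w^\ast$; \textbf{Step 2} extracts a convergent maximizing sequence and passes to the limit via an upper-semicontinuity argument for $w\mapsto u(k,w)$.

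For Step 1, write $M:=\sup_x U(x)<\infty$ and pick $\bar x>0$ with $U(x)<0$ for all $x\le-\bar x$, which exists since $\lim_{x\to-\infty}U(x)=-\infty$. Let $\P^\ast\in B_k(\P)$ and $\beta^\ast\in(0,1]$ be as in Proposition~\ref{prop:past} and Lemma~\ref{Rio}, and for $w\ne 0$ set $B:=\{\mathrm{sign}(\x)\langle w,X\rangle<-\beta^\ast|w|\}$. On $B$ one has $\x+\x\langle w,X\rangle\le|\x|(1-\beta^\ast|w|)$, while off $B$ one uses $U\le M$. Since $\P^\ast\in B_k(\P)$ and $U^+\le M^+$ (so $\E_{\P^\ast}[U(\x+\x\langle w,X\rangle)]$ is well defined in $[-\infty,M^+]$), monotonicity of $U$ together with $\P^\ast(B)\ge\beta^\ast$ from \eqref{alphaone} gives, for $|w|\ge\frac1{\beta^\ast}(1+\bar x/|\x|)$ — which forces $|\x|(1-\beta^\ast|w|)\le-\bar x$, hence makes the integrand on $B$ nonpositive —
\[
u(k,w)\le\E_{\P^\ast}\big[U(\x+\x\langle w,X\rangle)\big]\le M^++\beta^\ast\,U\big(|\x|(1-\beta^\ast|w|)\big).
\]
The right-hand side tends to $-\infty$ as $|w|\to\infty$, so (using $u(k,w^\ast)>-\infty$) there is $K_1$ with $u(k,w)\le u(k,w^\ast)$ for all $|w|\ge K_1$. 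Put $K:=K(\x,w^\ast):=\max(K_1,|w^\ast|)$; then $|w^\ast|\le K$ and, by \eqref{eq:valeur}--\eqref{eq:prob}, $u(\x)=\sup_{w\in D}u(k,w)=\sup_{w\in D,\,|w|\le K}u(k,w)$, which is \eqref{eq:okb}.

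For Step 2, take a maximizing sequence $(w_n)_n\subseteq D$ with $|w_n|\le K$; since $\{w\in D:|w|\le K\}$ is compact ($D$ closed), a subsequence converges to some $\hat w\in D$ with $|\hat w|\le K$. Fix $\Q\in B_k(\P)$. Along the subsequence $u(k,w_n)\le\E_\Q[U(\x+\x\langle w_n,X\rangle)]$, and writing $U=U^+-U^-$ and applying bounded convergence to $U^+$ (continuous, bounded by $M^+$) and Fatou to $U^-$ (continuous, nonnegative), one gets
\begin{align*}
u(\x)=\lim_n u(k,w_n)&\le\limsup_n\E_\Q[U^+(\x+\x\langle w_n,X\rangle)]-\liminf_n\E_\Q[U^-(\x+\x\langle w_n,X\rangle)]\\
&\le\E_\Q[U(\x+\x\langle\hat w,X\rangle)].
\end{align*}
Taking the infimum over $\Q\in B_k(\P)$ gives $u(\x)\le u(k,\hat w)$, while $\hat w\in D$ gives $u(k,\hat w)\le u(\x)$; hence $\hat w$ maximizes, so one may set $w_k:=\hat w\in D$ with $|w_k|\le K$. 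The argument is essentially routine once Proposition~\ref{prop:past} and Lemma~\ref{Rio} are in hand; the only points requiring a little care are that $\E_{\P^\ast}[U(\x+\x\langle w,X\rangle)]$ and $\E_\Q[U(\x+\x\langle\hat w,X\rangle)]$ are well defined (this is exactly where boundedness of $U$ from above enters, in place of the controls \eqref{concl} needed in Proposition~\ref{Kbound}) and the inequality direction $\P^\ast(B)\,U(|\x|(1-\beta^\ast|w|))\le\beta^\ast\,U(|\x|(1-\beta^\ast|w|))$, valid because the integrand is nonpositive for $|w|$ large. I do not anticipate a genuine obstacle.
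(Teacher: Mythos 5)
Your proof is correct, but Step 1 follows a genuinely different route from the paper. Where you bound $u(k,w)$ directly via the quantitative no-arbitrage condition of Lemma \ref{Rio} --- splitting over the event $B=\{\mathrm{sign}(\x)\langle w,X\rangle<-\beta^\ast|w|\}$, using $\P^\ast(B)\ge\beta^\ast$ and $U\le M$ off $B$ to get the explicit estimate $u(k,w)\le M^+ +\beta^\ast U\bigl(|\x|(1-\beta^\ast|w|)\bigr)\to-\infty$ --- the paper argues by contradiction: it assumes there are $w_n\in D$ with $|w_n|\ge n$ and $u(k,w_n)>u(k,w^\ast)$, extracts a limiting direction $\hat w=\lim_n w_n/|w_n|$, and applies the reverse Fatou lemma on the event $\{\x\langle\hat w,X\rangle<0\}$, using only the qualitative consequence $\P^\ast(\langle w,X\rangle<0)>0$ of \eqref{alphaone}. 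Your version is closer in spirit to the proof of Proposition \ref{Kbound}, is more quantitative (it yields an explicit threshold $K_1$ and a rate of divergence of $u(k,w)$), and avoids the subsequence extraction and reverse Fatou in this step; the paper's version is softer in that it never needs the uniform constant $\beta^\ast$ in the wealth bound, only positivity of the probability of loss in each direction. Your Step 2 is essentially the paper's argument: a maximizing sequence in the compact set $\{w\in D:|w|\le K\}$, a convergent subsequence, and a limsup estimate for fixed $\Q\in B_k(\P)$; you split $U=U^+-U^-$ and use bounded convergence plus Fatou, while the paper applies the reverse Fatou lemma to $U$ directly (legitimate since $U$ is bounded above), which is an equivalent computation. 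All the delicate points (well-definedness of the expectations under boundedness from above, the direction of the inequality $\P^\ast(B)\,U(\cdot)\le\beta^\ast U(\cdot)$ for a nonpositive argument, and finiteness of $u(\x)$ via $u(k,w^\ast)>-\infty$) are handled correctly.
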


\begin{proof}
We follow the same ideas as in the proof of Proposition \ref{Kbound}. In particular we show that 
there exists $K_1=K_1(\x)>0$, such that 
for all $w \in D$ with   $| w |\geq K_1,$
\begin{align}
\label{eq:uptopb}
u(k,w)\leq \inf_{\mathbb{Q} \in B_{k}(\mathbb{P} )} \mathbb{E}_{\mathbb{Q}}\left[U(x_0+ x_0 \langle w^\ast, X \rangle)\right]=u(k,w^\ast)
\end{align}
and \eqref{eq:okb} follows immediately choosing $K=K(\x,w^\ast)=\max(K_1(\x),|w^\ast|)$. 
Take $\P^\ast\in B_k(\P)$ as in Proposition \ref{prop:past}. Then \eqref{alphaone} implies that 
\begin{align*}
\P^\ast(\langle w, X\rangle <0)>0\quad \text{for all }w\in \R^d\setminus \{0\}.
\end{align*}
To prove \eqref{eq:uptopb}, we argue by contradiction: assume that for all $n \in \N$  there exists  $w_n \in D$ with $| w_n |\geq n$ and 
$u(k,w_n) > u(k,w^\ast).$ 
Then $\lim_{n\to \infty}|w_n|= \infty$. After taking a subsequence $\lim_{n\to \infty} w_n/|w_n|=\hat{w}$, and $|\hat{w}|=1$. 
On the other hand, $U$ is bounded from above by a constant $C>0$, and so
\begin{align*}
 \limsup_{n\to \infty} u(k,w_n) 
&\le \limsup_{n\to \infty}\E_{\P^\ast}[U(\x+\x\langle w_n, X\rangle)]\\
&\le \limsup_{n\to \infty}\E_{\P^\ast}[U(\x+\x|w_n| \langle \frac{w_n}{|w_n|}, X\rangle)\mathbf{1}_{\{\x\langle \hat{w}, X\rangle {<}0\}}] +C.
\end{align*}
Applying the reverse Fatou lemma to the last expression above and recalling that $U$ is continuous and $\lim_{x\to -\infty} U(x)=-\infty$ shows that 
$
 \limsup_{n\to \infty} u(k,w_n) =-\infty,
$
a contradiction to $ \limsup_{n\to \infty} u(k,w_n) \geq u(k,w^\ast)>-\infty$. \\
We now prove that $w_k$ exists. 
Take a sequence of maximizers $(w_n)_n$ of 
\begin{align*}
u(\x)= \sup_{w\in D,\ | w |\leq K }
\inf_{\mathbb{Q} \in B_{k}(\mathbb{P} )} \mathbb{E}_{\mathbb{Q}}\left[U(x_0+ x_0 \langle w, X \rangle)\right].
\end{align*}
Then, as $|w_n|\leq K$,  after taking a subsequence $\lim_{n\to \infty} w_n=\hat{w}$, where $|\hat{w}|\leq K$ and   $\hat{w} \in  D$, as $D$ is closed.
Then, for any $\Q\in B_k(\P)$,
\begin{align*}
u(\x)
&= \limsup_{n\to \infty} \inf_{\Q\in B_k(\P)} \E_{\Q}[U(\x+\x\langle w_n, X\rangle)]\\
&\le \limsup_{n\to \infty}\E_{\Q}[U(\x+\x\langle w_n, X\rangle)] \le \E_{\Q}[U(\x+\x\langle \hat{w}, X\rangle)],
\end{align*}
applying the reverse Fatou lemma and using that $U$ is continuous as before.
So taking the infimum over all  $\Q\in B_k(\P)$, we find that  
$u(\x)  \le \inf_{\mathbb{Q} \in B_{k}(\mathbb{P} )} \E_{\Q}[U(\x+\x\langle \hat{w}, X\rangle)] \le u(x_0)$ 
as $\hat{w} \in D$ and $\hat{w}$ is an optimizer for $u(x_0)=\max_{w \in  D} u(k,w)$.\\
\end{proof}

\section{Proof of Theorem \ref{thm:main}} \label{sec:proof}

To provide better intuition, it is helpful to rewrite the optimization problem $u(k, w)$ in a different way. 

\begin{lemma}\label{lem:villani}
Fix a Polish probability space $(\Omega, \mathcal{F},\mu)$ without atom. Then 
\begin{align}
\label{eq:Z}
u(k, w)= \inf_{X^\mu\sim \P, \, Z^\mu \in B^\mu_k } \E_{\mu} [ U(\x+\x\langle w,X^\mu+ Z^\mu\rangle)],
\end{align}
where $X^\mu:\Omega\to \R^d$ is a measurable map, the notation $X^\mu\sim \P$ means that $X^\mu$ has law $\P$ under $\mu$ and $ B^\mu_k$ is the set of measurable 
maps $Z^\mu:\Omega\to \R^d$ such that 
\begin{align*}
\|Z^\mu\|_{L_{\mu}^p(\R^d)} := (\E_\mu[|Z^\mu|^p])^{1/p}  \leq k.
\end{align*}
\end{lemma}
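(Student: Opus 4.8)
The plan is to prove the two inequalities in \eqref{eq:Z} separately, by matching each $\Q\in B_k(\P)$ with an admissible pair $(X^\mu,Z^\mu)$ on $(\Omega,\mathcal{F},\mu)$ and, conversely, each admissible pair with a measure in the Wasserstein ball, in such a way that the two expectations agree.

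For the inequality ``right-hand side of \eqref{eq:Z} $\le u(k,w)$'', I would fix $\Q\in B_k(\P)$. Since $\mathcal{W}_p(\P,\Q)\le k$ and the cost $(x,y)\mapsto|x-y|^p$ is nonnegative and lower semicontinuous, an optimal coupling $\pi\in\Pi(\P,\Q)$ exists, i.e. $\E_\pi[|X-Y|^p]=\mathcal{W}_p(\P,\Q)^p\le k^p$; see e.g. \cite[Theorem 4.1]{villani2008optimal}. As $(\Omega,\mathcal{F},\mu)$ is atomless and Polish, it is isomorphic modulo $\mu$-null sets to $([0,1],\mathrm{Leb})$, hence carries a measurable map $(X^\mu,Y^\mu)\colon\Omega\to\R^d\times\R^d$ with $(X^\mu,Y^\mu)_*\mu=\pi$. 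Setting $Z^\mu:=Y^\mu-X^\mu$ gives $X^\mu\sim\P$, $X^\mu+Z^\mu=Y^\mu\sim\Q$, and $\|Z^\mu\|_{L^p_\mu(\R^d)}^p=\E_\pi[|X-Y|^p]\le k^p$, so $Z^\mu\in B^\mu_k$. Therefore
\begin{align*}
\inf_{X^\mu\sim\P,\,Z^\mu\in B^\mu_k}\E_\mu[U(\x+\x\langle w,X^\mu+Z^\mu\rangle)]\le\E_\mu[U(\x+\x\langle w,Y^\mu\rangle)]=\E_\Q[U(\x+\x\langle w,X\rangle)],
\end{align*}
and taking the infimum over $\Q\in B_k(\P)$ gives the claimed inequality. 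For the reverse inequality, I would take arbitrary $X^\mu\sim\P$ and $Z^\mu\in B^\mu_k$, set $\Q:=(X^\mu+Z^\mu)_*\mu$, and note that $\pi:=(X^\mu,X^\mu+Z^\mu)_*\mu\in\Pi(\P,\Q)$ satisfies
\begin{align*}
\mathcal{W}_p(\P,\Q)^p\le\E_\pi[|X-Y|^p]=\E_\mu[|Z^\mu|^p]\le k^p,
\end{align*}
so $\Q\in B_k(\P)$; since $\E_\Q[U(\x+\x\langle w,X\rangle)]=\E_\mu[U(\x+\x\langle w,X^\mu+Z^\mu\rangle)]$ by the change-of-variables formula, we get $u(k,w)\le\E_\mu[U(\x+\x\langle w,X^\mu+Z^\mu\rangle)]$, and taking the infimum over admissible pairs finishes the proof.

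The only non-formal point—hence the main obstacle—is the realization step: one must invoke that an atomless Polish probability space is rich enough to support a random vector with a prescribed law on $\R^d\times\R^d$, which is exactly where atomlessness (equivalently, isomorphism with $([0,1],\mathrm{Leb})$) enters. I would also record that both sides of \eqref{eq:Z} are read in $[-\infty,+\infty]$ with the same convention as in the definition of $u(k,w)$, since the two integrands are literally identical after the change of variables, so no integrability assumption on $U$ is needed for the identity itself.
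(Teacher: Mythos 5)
Your proposal is correct and follows essentially the same route as the paper: for each measure in $B_k(\P)$ you pass to a (near-)optimal coupling and realize it on the atomless space via the measurable-isomorphism/realization argument (the paper cites Villani, Ch.~1 for exactly this step), and conversely you push an admissible pair $(X^\mu,Z^\mu)$ forward to a measure $(X^\mu+Z^\mu)_*\mu\in B_k(\P)$. The only cosmetic difference is that the paper phrases the first inequality through $\epsilon$-optimizers of $u(k,w)$ while you fix an arbitrary $\Q$ and take the infimum afterwards, which is logically equivalent.
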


\begin{proof}
We only show the ``$\ge$"-inequality; the ``$\le$"-inequality follows very similarly.
We fix $\epsilon>0.$ Then there exists $\Q_{\epsilon}\in B_k(\P)$  such that 
$$u(k, w)=\inf_{\Q\in B_k(\P)} \E_\Q[U(\x+\x\langle w, X\rangle)] \geq \E_{\Q_{\epsilon}}[U(\x+\x\langle w, X\rangle)]-\epsilon.$$
There exists some  coupling $\pi_{\epsilon} \in \Pi(\P, \Q_{\epsilon})$  
satisfying $(\E_{\pi_{\epsilon}}[|Y-X|^p])^{1/p}\le k$ and thus
\begin{align*}
u(k, w)&\ge  \E_{\pi_{\epsilon}}[U(\x+\x\langle w, Y\rangle)]-\epsilon= \E_{\pi_{\epsilon}}[U(\x+\x\langle w, X+(Y-X)\rangle)]-\epsilon.
\end{align*}
Applying the [Villani, Ch.1, the measurable isomorphism p.7]\footnote{As we have not assumed that $(\R^d\times \R^d,(X, Y-X)_*\pi_{\epsilon})$ is atomless, the map $T_{\epsilon}$ is not necessary an isomorphism.}, we can find a measurable map $T_{\epsilon}: (\Omega, \mu) \to (\R^d\times \R^d, (X, Y-X)_*\pi_{\epsilon})$ such that ${T_{\epsilon}}_*\mu=(X, Y-X)_*\pi_{\epsilon}$. By a slight abuse of notation we call its first component $X^\mu_{\epsilon}$ and its second component $Z^\mu_{\epsilon}$. By definition, $X^\mu_{\epsilon}$ has law $\P$ under $\mu$ and 
$ (\E_\mu[|Z^\mu_{\epsilon}|^p])^{1/p}=(\E_{\pi_{\epsilon}}[|Y-X|^p])^{1/p}\le k.$  We then write
\begin{align*}
u(k, w)&\ge \E_{\pi_{\epsilon}}[U(\x+\x\langle w, X+(Y-X)\rangle)]-\epsilon = \E_\mu[U(\x+\x\langle w, X_{\epsilon}^\mu+Z_{\epsilon}^\mu \rangle)]-\epsilon \\
&\ge \inf_{X^\mu\sim \P,  \, Z^\mu \in B^\mu_k } \E_{\mu} [ U(\x+\x\langle w,X^\mu+ Z^\mu \rangle)]-\epsilon.
\end{align*}
This concludes the proof.
\end{proof}

The following observation is crucial:
\begin{lemma}\label{lem:2}
Fix $w\neq 0$ and $\epsilon>0$. Assume that $U$ is non-decreasing. Then there exists an $\epsilon$-optimizer $(X^\epsilon, Z^\epsilon)$ of \eqref{eq:Z} such that $X^\epsilon\sim \P$ under $\mu,$ $Z^\epsilon \in B^\mu_k$ and
\begin{align}
\label{eq:zepsi}
Z^\epsilon=-\mathrm{sign}(\x)|Z^\epsilon|  \frac{w}{|w|}. 
\end{align}
In particular
\begin{align}\label{eq:absolute}
\begin{split}
u(k,w)&=\inf_{X^\mu\sim \P, \, Z^\mu \in B^\mu_k } \E_{\mu} [ U(\x+\x\langle w,X^\mu+ Z^\mu\rangle)] \\
&= \inf_{X^\mu\sim \P, \, Z^\mu \in B^\mu_k } 
\E_{\mu} [ U(\x+\x\langle w,X^\mu\rangle - |\x||Z^\mu||w|)].
\end{split}
\end{align}
\end{lemma}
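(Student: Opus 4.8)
The plan is to start from a feasible pair $(X^\mu,Z^\mu)$ for \eqref{eq:Z} whose objective lies within $\epsilon$ of $u(k,w)$ (such a pair exists by Lemma \ref{lem:villani} and the definition of the infimum), and then to replace $Z^\mu$ by the ``aligned'' vector field
\begin{align*}
\tilde Z:=-\mathrm{sign}(\x)\,|Z^\mu|\,\frac{w}{|w|},
\end{align*}
which is well defined since $w\neq 0$, is measurable, points in the fixed direction $-\mathrm{sign}(\x)\frac{w}{|w|}$, and satisfies $|\tilde Z|=|Z^\mu|$ pointwise, hence $\|\tilde Z\|_{L^p_\mu(\R^d)}=\|Z^\mu\|_{L^p_\mu(\R^d)}\le k$. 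Thus $(X^\mu,\tilde Z)$ is again feasible for \eqref{eq:Z} and $\tilde Z$ has the form \eqref{eq:zepsi}; the only thing to verify is that this substitution does not increase the objective. Note that it is crucial here that in \eqref{eq:Z} the constraints on $X^\mu$ and $Z^\mu$ are imposed separately (on the law of $X^\mu$ and on the $L^p_\mu$-norm of $Z^\mu$), so that $Z^\mu$ may be altered pointwise while $X^\mu$ is kept fixed.

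To compare objectives, I would compute directly
\begin{align*}
\x\langle w,\tilde Z\rangle=-\mathrm{sign}(\x)\,\x\,|Z^\mu|\,\frac{\langle w,w\rangle}{|w|}=-|\x|\,|Z^\mu|\,|w|,
\end{align*}
and, from the Cauchy--Schwarz inequality $|\langle w,Z^\mu\rangle|\le |w|\,|Z^\mu|$, distinguish the two cases $\x>0$ and $\x<0$ to obtain in both of them
\begin{align*}
\x\langle w,Z^\mu\rangle\ge -|\x|\,|Z^\mu|\,|w|=\x\langle w,\tilde Z\rangle.
\end{align*}
Adding $\x+\x\langle w,X^\mu\rangle$ and using that $U$ is non-decreasing yields $U(\x+\x\langle w,X^\mu+Z^\mu\rangle)\ge U(\x+\x\langle w,X^\mu+\tilde Z\rangle)$ $\mu$-a.s.; taking $\E_\mu$ shows that the objective of $(X^\mu,\tilde Z)$ is at most that of $(X^\mu,Z^\mu)$. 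Since $(X^\mu,\tilde Z)$ is feasible, its objective is also $\ge u(k,w)$, so it is an $\epsilon$-optimizer of the announced form, which proves the first assertion.

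The identity \eqref{eq:absolute} then follows by comparing the infimum in \eqref{eq:Z}, call it $I_1=u(k,w)$, with the last infimum, call it $I_2$. For $I_1\le I_2$: given any feasible $(X^\mu,Z^\mu)$, the pair $(X^\mu,\tilde Z)$ is feasible for $I_1$ and, by the first displayed identity above, $\E_\mu[U(\x+\x\langle w,X^\mu+\tilde Z\rangle)]=\E_\mu[U(\x+\x\langle w,X^\mu\rangle-|\x||Z^\mu||w|)]$, so $I_1$ is bounded above by every value appearing in $I_2$. For $I_1\ge I_2$: the Cauchy--Schwarz estimate above together with monotonicity of $U$ gives $U(\x+\x\langle w,X^\mu+Z^\mu\rangle)\ge U(\x+\x\langle w,X^\mu\rangle-|\x||Z^\mu||w|)$ for every feasible pair, so every value in $I_1$ dominates the corresponding value in $I_2$. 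There is no serious obstacle in this argument; the only points demanding attention are the bookkeeping with $\mathrm{sign}(\x)$ (the construction is symmetric in the sign of $\x$, but that sign flips the orientation of $\tilde Z$) and the observation that only the direction of $Z^\mu$, not its modulus, is changed, so that the $L^p_\mu$-ball constraint is automatically preserved.
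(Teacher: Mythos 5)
Your proposal is correct and follows essentially the same route as the paper: replace an $\epsilon$-optimal $Z^\mu$ by the aligned field $-\mathrm{sign}(\x)|Z^\mu|\tfrac{w}{|w|}$, note that the $L^p_\mu$-norm is unchanged, and use Cauchy--Schwarz together with monotonicity of $U$ to show the objective does not increase, with the same two-sided comparison yielding \eqref{eq:absolute}.
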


\begin{proof}
Take any $\epsilon$-optimizer $( X^\epsilon,\tilde{Z}^\epsilon)$ of \eqref{eq:Z}, i.e. $X^\epsilon \sim \P$ under $\mu$, $\tilde{Z}^\epsilon \in B^\mu_k$ and 
\begin{align*}
\E_{\mu} [ U(\x+\x\langle w, X^\epsilon+ \tilde{Z}^\epsilon \rangle)]\le \inf_{X^\mu\sim \P, \, Z^\mu \in B^\mu_k } \E_{\mu} [ U(\x+\x\langle w,X^\mu+ Z^\mu \rangle)]+\epsilon.
\end{align*}
Define 
\begin{align}\label{eq:zstar}
Z^\epsilon:=-\text{sign}(\x)|\tilde{Z}^\epsilon| \frac{w}{|w|}.
\end{align}
Then trivially $|Z^\epsilon|=|\tilde{Z}^\epsilon|$ and \eqref{eq:zepsi} holds true. Moreover,  
\begin{align}
\nonumber
&\inf_{X^\mu\sim \P, \, Z^\mu \in B^\mu_k } \E_{\mu} [ U(\x+\x\langle w,X^\mu+ Z^\mu \rangle)]+\epsilon \ge  \E_\mu[U(\x+\x\langle w, X^\epsilon+\tilde{Z}^\epsilon\rangle)]\nonumber\\
\label{eq:disp}
&  \stackrel{\text{(CS)}}{\ge} \E_\mu[U(\x+\x\langle w,  X^\epsilon \rangle-|\x||w||\tilde{Z}^\epsilon|)]
=  \E_\mu[U(\x+\x\langle w,X^\epsilon+ Z^\epsilon \rangle)],
\end{align}
as $\langle w, Z^\epsilon \rangle=-\text{sign}(\x)|\tilde{Z}^\epsilon| |w|$. 
This shows that $( X^\epsilon, Z^\epsilon)$ is also an $\epsilon$-minimizer. Next we show \eqref{eq:absolute}. 
As $X^\epsilon \sim \P$ and $\tilde{Z}^\epsilon \in B^\mu_k$ using \eqref{eq:disp} and letting $\epsilon\to 0$ show
\begin{align*}
&\inf_{X^\mu\sim \P, \, Z^\mu \in B^\mu_k } \E_{\mu} [ U(\x+\x\langle w,X^\mu+ Z^\mu\rangle)]\\
&\qquad\ge \inf_{X^\mu\sim \P, \, Z^\mu \in B^\mu_k } \E_\mu[U(\x+\x\langle w, X^\mu\rangle-|\x||w||Z^\mu| \rangle)].
\end{align*}
Next we show the reverse inequality 
by the same argument as above: we choose an $\epsilon$-optimizer $(X^\epsilon,{Z}^\epsilon)$ for 
the second infimum in \eqref{eq:absolute}  and set $Z^\ast:=-\text{sign}(\x)|{Z}^\epsilon| \frac{w}{|w|}.$ 
This yields
\begin{align*}
&\inf_{X^\mu\sim \P, \, Z^\mu \in B^\mu_k } \E_\mu[U(\x+\x\langle w, X^\mu\rangle-|\x||w||Z^\mu | \rangle)] +\epsilon\\
&\qquad\ge \E_\mu[U(\x+\x\langle w,  X^\epsilon \rangle-|\x||w||Z^\epsilon| \rangle)]= \E_\mu[U(\x+\x\langle w, X^\epsilon+Z^\ast \rangle)]\\
&\qquad\ge \inf_{X^\mu\sim \P, \, Z^\mu \in B^\mu_k } \E_{\mu} [ U(\x+\x\langle w, X^\mu+Z^\mu \rangle )].
\end{align*}
Taking $\epsilon\to 0$ concludes the proof.
\end{proof}
The following proposition is central for the proof of Theorem \ref{thm:main}.
\begin{proposition}\label{prop:main}
Let $U$ be concave, non-decreasing, differentiable, non-constant and let Assumption \ref{ass:1} hold.
Fix a compact subset $K\subseteq  D$ and $\hat{w} \in D$. Then the following holds: for any $\delta>0$ there exists $k_0=k_0(x_0, \delta, K,\hat{w}) \in \N$ such that for all $w\in K$ satisfying $|w|-|\hat{w}|\ge \delta$  and for all $k\ge k_0$ we have 
\begin{align}
\label{strictineq}
u(k,w)&=\inf_{\Q\in B_k(\P)} E_\Q[U(\x+\x\langle w, X\rangle)]< \inf_{\Q\in B_k(\P)} E_\Q[U(\x+\x\langle \hat{w}, X\rangle)]=u(k,\hat{w}).
\end{align}
\end{proposition}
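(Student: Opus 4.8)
The plan is to bound $u(k,\hat w)$ from below and $u(k,w)$ from above by explicit functions of $k$ and then to check that the upper bound drops below the lower bound once $k$ is large, uniformly over all $w\in K$ with $|w|\ge|\hat w|+\delta$; in fact the difference in \eqref{strictineq} will be shown to tend to $+\infty$. For the \textbf{lower bound on $u(k,\hat w)$} I would use the reformulation \eqref{eq:absolute} together with Jensen's inequality (concavity of $U$): for any admissible pair $X^\mu\sim\P$, $Z^\mu\in B^\mu_k$ one has $\E_\mu[U(\x+\x\langle\hat w,X^\mu\rangle-|\x||Z^\mu||\hat w|)]\ge U\bigl(\E_\mu[\x+\x\langle\hat w,X^\mu\rangle-|\x||Z^\mu||\hat w|]\bigr)$; since $\E_\mu[X^\mu]=\E_\P[X]=:m$ and $\E_\mu[|Z^\mu|]\le(\E_\mu[|Z^\mu|^p])^{1/p}\le k$ (here $p\ge1$), monotonicity of $U$ gives $\E_\mu[U(\cdots)]\ge U(A_k)$ with $A_k:=\x+\x\langle\hat w,m\rangle-|\x||\hat w|k$, so $u(k,\hat w)\ge U(A_k)$. (Assumption \ref{ass:1} and monotonicity yield $U(x)\ge-C(1+|x|^p)$, so all these expectations are well-defined and Jensen is licit.)

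For the \textbf{upper bound on $u(k,w)$} I would insert the deterministic control $Z^\mu\equiv-\mathrm{sign}(\x)\,k\,w/|w|$ into \eqref{eq:absolute} (equivalently, test with $\Q=\mathrm{Law}_\P(X-\mathrm{sign}(\x)k\,w/|w|)\in B_k(\P)$), giving $u(k,w)\le\E_\P[U(\x+\x\langle w,X\rangle-|\x|k|w|)]$. Fix $\epsilon,\eta\in(0,1)$ and split the expectation over $\{|X|\le\epsilon k\}$ and its complement. On $\{|X|\le\epsilon k\}$ the Cauchy--Schwarz inequality bounds the argument by $P_k:=|\x|-(1-\epsilon)|\x|k|w|$; since $\P(|X|>\epsilon k)\le C_{\P}^p/(\epsilon k)^p\to0$, once $k$ is large enough that $P_k<0$ and $\P(|X|\le\epsilon k)\ge1-\eta$ this part is $\le(1-\eta)U(P_k)$. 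On $\{|X|>\epsilon k\}$ I bound $U$ above by a fixed tangent line $U(\cdot)\le U(y_0)+U'(y_0)(\cdot-y_0)$, where $y_0$ is chosen with $U'(y_0)>0$ (possible since $U$ is non-constant); because $\E_\P[|X|\mathbf 1_{\{|X|>\epsilon k\}}]\to0$ this part is $\le C^{**}$ for all large $k$, with $C^{**}$ depending only on $x_0$ and $K$. Altogether $u(k,w)\le(1-\eta)U(P_k)+C^{**}$.

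For the \textbf{comparison}: note first $U(x)\to-\infty$ as $x\to-\infty$ (from concavity, monotonicity and $U$ non-constant). If $\hat w=0$ then $u(k,\hat w)=U(\x)$ while $P_k\to-\infty$ uniformly over $|w|\ge\delta$, so $(1-\eta)U(P_k)+C^{**}\to-\infty<U(\x)$ and we are done. If $\hat w\ne0$ then $A_k,P_k\to-\infty$ and $\lambda_k:=P_k/A_k\to(1-\epsilon)|w|/|\hat w|$; choosing $\epsilon,\eta$ so small that $(1-\eta)(1-\epsilon)(|\hat w|+\delta)>|\hat w|$ gives $(1-\eta)\lambda_k\ge1+c_0$ for some $c_0>0$ and all large $k$, uniformly in $w$ (using $|w|\ge|\hat w|+\delta$). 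Concavity in the scaling form $U(\lambda_k A_k)\le\lambda_k U(A_k)-(\lambda_k-1)U(0)$ (valid as $\lambda_k\ge1$ for large $k$) then yields
\[
(1-\eta)U(P_k)+C^{**}\;\le\;(1-\eta)\lambda_k\,U(A_k)+O(1)\;=\;U(A_k)+\bigl((1-\eta)\lambda_k-1\bigr)U(A_k)+O(1),
\]
the $O(1)$ absorbing $-(1-\eta)(\lambda_k-1)U(0)+C^{**}$ (bounded since $\lambda_k$ is bounded for large $k$). As $\bigl((1-\eta)\lambda_k-1\bigr)U(A_k)\le c_0\,U(A_k)\to-\infty$ while the other terms stay bounded, the right-hand side is $<U(A_k)\le u(k,\hat w)$ for all $k\ge k_0$. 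Every threshold here depends only on $x_0,\delta,K,\hat w$ (through $M:=\max_{w\in K}|w|$, $C_{\P}$ and $m$) and the fixed data $y_0$, $U(0)$ of $U$, not on the individual $w$, which proves \eqref{strictineq}.

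\textbf{Where the difficulty lies.} The delicate point is that $U$ may decay at $-\infty$ at very different rates — essentially linearly when $U'(-\infty)<\infty$, polynomially otherwise — so estimating $U(P_k)$ \emph{in absolute size} would give a bound either too weak or too strong to compare with $U(A_k)$. This is circumvented by comparing $U(P_k)$ to $U(A_k)$ only \emph{relatively}, through the scaling inequality above; the surviving condition then involves nothing but the ratio $|P_k|/|A_k|\to(1-\epsilon)|w|/|\hat w|$, which exceeds $1$ precisely because $|w|\ge|\hat w|+\delta$ once the mass loss $\eta$ is chosen small enough. Making all the constants uniform over $w\in K$, and the harmless special case $\hat w=0$, are the remaining bookkeeping.
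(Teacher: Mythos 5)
Your upper bound on $u(k,w)$ (testing the infimum with the deterministic shift $\Q=\mathrm{Law}_{\P}(X-\mathrm{sign}(\x)k\,w/|w|)$, then splitting on $\{|X|\le\epsilon k\}$) is sound, and the same test measure appears in the paper's proof in \eqref{last?}. The argument breaks at the lower bound on $u(k,\hat w)$. You invoke Jensen's inequality in the form $\E_\mu[U(Y)]\ge U(\E_\mu[Y])$; for a \emph{concave} $U$ Jensen runs the other way, $\E_\mu[U(Y)]\le U(\E_\mu[Y])$, so this step is invalid. Worse, the conclusion you need, $u(k,\hat w)\ge U(A_k)$ with $A_k=\x+\x\langle \hat w,m\rangle-|\x||\hat w|k$, is not merely unproved but false in general: the adversary is not restricted to deterministic shifts and can concentrate the transport on a small-probability event. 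Writing $V(t):=-U(-t)$, Assumption \ref{ass:1} only forces $V(t)\le C_1(1+t^p)$, which (for $p>1$, after smoothing) allows a convex $V$ that is nearly flat on long stretches $[t_j,(1-\theta)t_{j+1}]$ and then climbs with slope of order $t_{j+1}^{p-1}$ to reach $V(t_{j+1})\asymp t_{j+1}^p$, with $t_{j+1}$ growing much faster than $t_j$. For $k\asymp \gamma t_{j+1}$ with $\gamma$ small, moving a \emph{fixed fraction} of the mass of $\P$ a distance of order $t_{j+1}$ stays within the Wasserstein budget $k$ and drives $\E_\Q[U(\x+\x\langle\hat w,X\rangle)]$ down to order $-t_{j+1}^p$, whereas $U(A_k)=-V(\mathrm{const}\cdot k)$ sits in the flat region and is only of order $-V(t_j)$, incomparably larger. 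So the chain $u(k,w)\le(1-\eta)U(P_k)+C^{**}<U(A_k)\le u(k,\hat w)$ cannot be closed: its last link fails along such subsequences of $k$, and since your upper bound $(1-\eta)U(P_k)+C^{**}$ is itself only of the deterministic-shift order $U(-\mathrm{const}\cdot k)$, no choice of $\epsilon,\eta$ repairs the comparison.

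This is exactly the difficulty the paper's proof is designed around: it never lower-bounds the robust value $u(k,\hat w)$ by a deterministic benchmark. Instead it fixes an $\epsilon$-optimal adversarial pair $(\hat X(k),\hat Z(k))$ for $\hat w$ (Lemma \ref{lem:2}), compares $w$ and $\hat w$ \emph{along the same pair} via the gradient inequality of concavity, obtaining $U(A_k(w))-U(A_k(\hat w))\le \frac{\delta}{|\hat w|}\bigl(U(A_k(\hat w))-U(G_k)\bigr)$, and then only needs (a) $\E_\mu[U(A_k(\hat w))]\to-\infty$, which is an \emph{upper} bound obtained precisely with your deterministic shift, and (b) $\E_\mu[U(G_k)]$ bounded below, which works because $G_k$ contains no $Z$-term, so Assumption \ref{ass:1} applies directly with $\hat X(k)\sim\P$. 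Your relative-scaling idea at the end is in the right spirit, but it is applied to the wrong pair of quantities; to make a two-sided-benchmark scheme work you would need a lower bound on the robust value $u(k,\hat w)$ comparable to $U(-ck)$ uniformly in $k$, and the construction above shows no such bound follows from the stated hypotheses.
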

\begin{proof}
Assume first that $\hat{w}\neq 0$.
By Lemma \ref{lem:2} we can rewrite the claim as 
\begin{align*}
&\inf_{X^\mu\sim \P, \, Z^\mu \in B^\mu_k } \E_{\mu} [ U(\x+\x\langle w,X^\mu\rangle - |\x||Z^\mu||w|)]= u(k,w)\\
&< u(k,\hat{w})  
=\inf_{X^\mu\sim \P, \, Z^\mu \in B^\mu_k } \E_{\mu} [ U(\x+\x\langle \hat{w},X^\mu\rangle -|\x||Z^\mu||\hat{w}|)].
\end{align*}
Let $(\hat{X}(k),\hat{Z}(k))$ be an $\epsilon$-optimizer for $u(k,\hat{w})$ given by Lemma \ref{lem:2}, so that by definition
\begin{align*}
&\inf_{X^\mu\sim \P, \, Z^\mu \in B^\mu_k } \E_{\mu} [ U(\x+\x\langle w,X^\mu\rangle - |\x||Z^\mu||w|)]\\
&\qquad- \inf_{X^\mu\sim \P, \, Z^\mu \in B^\mu_k } \E_{\mu} [ U(\x+\x\langle \hat{w},X^\mu\rangle  -|\x||Z^\mu||\hat{w}|)]\\
&\le \E_{\mu} [ U(\x+\x\langle w,\hat{X}(k)\rangle -|\x| |\hat{Z}(k)||w|)- U(\x+\x\langle \hat{w},\hat{X}(k)\rangle - |\x||\hat{Z}(k)||\hat{w}|)]+\epsilon.
\end{align*}
To simplify notation we set 
\begin{align*}
A_k(w)&:=\x+\x\langle w,\hat{X}(k)\rangle -|\x| |\hat{Z}(k)||w|.
\end{align*} 
Fix $\delta>0$. 
Using this notation, the claim follows if we show that there exists $k_0\in \N$ such that for all $w\in K$ satisfying $|w|-|\hat{w}|\ge \delta$  and for all $k\ge k_0$ we have 
\begin{align}\label{eq:aim}
\E_{\mu} [ U(A_k(w))- U(A_k(\hat{w}))]< -\epsilon.
\end{align}
Let $$G_k:=\x+\x\langle \hat{w},\hat{X}(k) \rangle+\x\langle|\hat{w}| \frac{\hat{w}-w}{\delta}, \hat{X}(k)\rangle.$$ 
By concavity of $U$
\begin{align}\label{eq:ableitung}
\begin{split}
U(A_k(w))- U(A_k(\hat{w})) & \leq (A_k(w)-A_k(\hat{w}))U\rq{}(A_k(\hat{w})) \\
U(A_k(\hat{w}))- U(G_k)   & \geq (A_k(\hat{w})-G_k)U\rq{}(A_k(\hat{w})).
\end{split}
\end{align}
Recall that $|w|-|\hat{w}|\ge \delta$ by assumption, and thus
\begin{align}\label{eq:ableitung2}
\begin{split}
A_k(\hat{w})-G_k & =  - |\x||\hat{Z}(k)||\hat{w}|+\x\langle|\hat{w}| \frac{w-\hat{w}}{\delta}, \hat{X}(k)\rangle\\
& = \frac{|\hat{w}|}{\delta}(- |\x||\hat{Z}(k)|\delta+\x\langle {w-\hat{w}}, \hat{X}(k)\rangle)\\
A_k(w)-A_k(\hat{w}) &=  \x\langle w -\hat{w},\hat{X}(k)\rangle -|\x||\hat{Z}(k)|(|w|-|\hat{w}|) \\
& \le \x\langle w -\hat{w},\hat{X}(k)\rangle -|\x||\hat{Z}(k)|\delta = \frac{\delta}{|\hat{w}|}(A_k(\hat{w})-G_k ). 
\end{split}
\end{align}
As $U^{\prime}\ge 0$ it follows from \eqref{eq:ableitung} and \eqref{eq:ableitung2} that 
\begin{align*}
U(A_k(w))- U(A_k(\hat{w})) & \leq  \frac{\delta}{|\hat{w}|}(A_k(\hat{w})-G_k )U\rq{}(A_k(\hat{w}))  \leq \frac{\delta}{|\hat{w}|}(U(A_k(\hat{w}))- U(G_k)). 
\end{align*}
To show \eqref{eq:aim} it thus remains to argue that the expectation of the last expression becomes small for large $k\in \N$; in fact we will show that it diverges to $-\infty$. For this we first note that by definition of $(\hat{X}(k),\hat{Z}(k))$ and Lemma \ref{lem:2} we have
\begin{align*}
\E_{\mu} [ U(A_k(\hat{w}))] &\le \inf_{X^\mu\sim \P, \, Z^\mu \in B^\mu_k } \E_{\mu} [ U(\x+\x\langle \hat{w},X^\mu\rangle - |\x||Z^\mu||\hat{w}|)]+\epsilon
=u(k,\hat{w}) +\epsilon. 
\end{align*}
Let $\varphi(x):= x-k\,\text{sign}(\x)\tfrac{\hat{w}}{|\hat{w}|}$ and define $\Q^*:= \varphi_*\P.$ Using the coupling 
$
\pi = (x\mapsto (x, \varphi(x))_*\P\in \Pi(\P, \Q^*)$
yields the upper bound
$$
\mathcal{W}_p(\P,\Q^*)\le \E_{\P}\Big[\big|k\frac{\hat{w}}{|\hat{w}|}\big|^p\Big]^{1/p}=k
$$
and thus $\Q^*\in B_k(\P)$. In particular
\begin{align}
\nonumber
& u(k, \hat{w})  \le  \E_{\Q^*} [ U(\x+\x\langle \hat{w},X\rangle)]= \E_{\P} [ U(\x+ \x \langle \hat{w},X\rangle -k|\x||\hat{w}|)] \\
\label{last?}
 &  \le  U(\x+ \x \E_{\P} [\langle \hat{w},X\rangle] -k|\x||\hat{w}|)  \leq  U(\x+ |\x| |\hat{w}|\E_{\P} [|X|] -k|\x||\hat{w}|) 
\end{align}
using Jensen's and Cauchy-Schwarz inequality. Note that $\E_{\P}[|X|]\leq 1+ \E_{\P}[|X|^p]<\infty$, see \eqref{triangle} and $\lim_{x\to -\infty} U(x)= -\infty$, as $U$ is concave, non-decreasing and non-constant. 
 In conclusion, the last expression in \eqref{last?} goes to $-\infty$ when $k\to \infty$ and 
\begin{align}\label{eq:1}
\lim_{k\to \infty} \E_{\mu} [ U(A_k(\hat{w}))]=-\infty.
\end{align}
On the other hand, as we have assumed that $w \in K$ and that $U$ is non-decreasing, the Cauchy-Schwarz and triangle inequality, together with \eqref{mino} yield
\begin{align}\label{eq:2}
\begin{split}
& \E_{\mu} [ U( G_k)]\ge \E_{\mu} [ U(-|\x|-|\x| \big| \hat{w} +|\hat{w}| \frac{\hat{w}-w}{\delta}\big|| \hat{X}(k)|)]\\
& \quad {\ge} \E_{\mu} [ U(-|\x|-|\x| | \hat{w}| (1+\frac{|\hat{w}|+|w|}{\delta}) |\hat{X}(k)|)] {\ge} \E_{\mu} [U ( -|\x|-\tilde{C}|\hat{X}(k)|)]\\
&\quad {\ge} -C_1\left(1+ 2^{p-1} (|\x|^p + \tilde{C}^p \E_{\P}[|X|^p]\right)>-\infty
\end{split}
\end{align}
for a constant $\tilde{C}=\tilde{C}(x_0, \delta, K,\hat{w})>0$. Here the last inequality follows as $\hat{X}(k)\sim \P$  under $\mu$, i.e. 
$\E_{\mu}[|\hat{X}(k)|^p]=\E_{\P}[|X|^p]<\infty.$ Combining \eqref{eq:1}-\eqref{eq:2} shows that 
$
\lim_{k\to \infty} \E_{\mu}[ U(A_k(\hat{w}))- U(G_k)]=-\infty,
$
as claimed. \\
If $\hat{w}=0$, $u(k,\hat{w})=U(\x)$. Take any $w\in D\setminus\{0\}$. Then as in \eqref{last?} 
$\E_{\P}[U(\x+\x\langle w,X\rangle)] \leq  U(\x+ |\x| |{w}|\E_{\P} [|X|])<\infty$.
So, Lemma \ref{lem:-infty} shows that 
$
\lim_{ k \to \infty}u(k,w)=-\infty
$ 
and the claim follows. 

\end{proof}

\begin{corollary}
\label{coro:main}
Let $U$ be concave, non-decreasing, differentiable, non-constant and let Assumption \ref{ass:1} hold.
Let $K$ be a  compact subset satisfying $K\subseteq D$ as well as $\Dc \cap K\neq \emptyset$, and choose  $\overline{w} \in \Dc \cap K$.  Fix $\delta>0$ and let $k_0\in \N$  
as in Proposition \ref{prop:main}. If $w_k\in K$ $\forall k\ge k_0$,  we have  
\begin{align}\label{eq:estimate}
0 \leq |w_k|-|\overline{w}|< \delta\qquad\text{for all }k\ge k_0.
\end{align}
Thus $\lim_{k\to \infty}w_k\in \Dc$ exists.
\end{corollary}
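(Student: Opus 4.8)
The plan is to derive \eqref{eq:estimate} directly from the optimality of $w_k$ together with the strict inequality in Proposition \ref{prop:main}, and then upgrade the resulting uniform estimate to convergence by a compactness-plus-Cauchy argument. First I would fix $\delta>0$ and take $k_0 = k_0(x_0,\delta,K,\overline{w})\in\N$ as provided by Proposition \ref{prop:main} applied with $\hat{w}=\overline{w}$; note $\overline{w}\in\Dc\cap K\subseteq D$ and $K$ is compact, so the hypotheses of the proposition are met. The left inequality $0\le |w_k|-|\overline{w}|$ in \eqref{eq:estimate} is immediate, since $\overline{w}\in\Dc$ means $|\overline{w}|=\inf_{w\in D}|w|$ and $w_k\in D$. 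For the right inequality, suppose for contradiction that $|w_k|-|\overline{w}|\ge\delta$ for some $k\ge k_0$. Since $w_k\in K$ by assumption, Proposition \ref{prop:main} (with $w=w_k$, $\hat{w}=\overline{w}$) gives $u(k,w_k)<u(k,\overline{w})$. But $\overline{w}\in D$, so $u(k,\overline{w})\le\sup_{w\in D}u(k,w)=u(k,w_k)$ by definition of $w_k$, a contradiction. Hence $|w_k|-|\overline{w}|<\delta$ for all $k\ge k_0$, proving \eqref{eq:estimate}.

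It remains to show $\lim_{k\to\infty}w_k$ exists and lies in $\Dc$. From \eqref{eq:estimate}, letting $\delta\to 0$ we get $\lim_{k\to\infty}|w_k|=|\overline{w}|=\inf_{w\in D}|w|$; in particular $(w_k)_k$ is eventually bounded, say $|w_k|\le R$ for all large $k$. Let $w_\infty$ be any accumulation point of $(w_k)_k$, obtained along a subsequence $w_{k_j}\to w_\infty$. Then $w_\infty\in D$ since $D$ is closed, and $|w_\infty|=\lim_j|w_{k_j}|=\inf_{w\in D}|w|$, so $w_\infty\in\Dc$. This shows every accumulation point lies in $\Dc$, but does not by itself give convergence; so the remaining point is to rule out two distinct accumulation points. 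The main obstacle is precisely this: $\Dc$ need not be a singleton in general (recall Remark~(b): if $D$ is nonconvex, $\Dc$ can contain several points, as in the example $\Dc=\{w^u,-w^u\}$).

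To close this gap I would apply Proposition \ref{prop:main} once more, this time using the finitely-many-accumulation-point structure that is forced by compactness. More carefully: suppose $w_\infty$ and $w'_\infty$ were two distinct accumulation points in $\Dc$, and set $\delta_0:=|w_\infty-w'_\infty|/3>0$. Both are points of the compact set $K$ (shrink $K$ if necessary so that $K$ still contains a neighbourhood structure separating them, or simply work with the closed ball $\overline{B}_R(0)\cap D$ as the ambient compact set, which is legitimate once we know $|w_k|\le R$ eventually). Since $w_\infty,w'_\infty\in\Dc$ have the same norm, neither "dominates" the other in norm, and a direct re-run of Proposition \ref{prop:main}—comparing $u(k,w)$ for $w$ near $w_\infty$ against $u(k,\overline{w})$ with $\overline{w}=w'_\infty\in\Dc\cap K$, and using that $w_k$ oscillates between the two neighbourhoods—produces, along suitable subsequences, the contradiction $u(k,w_k)<u(k,w_k)$: whenever $w_k$ is close to $w_\infty$ it is beaten by $w'_\infty$, contradicting optimality, unless $|w_k|-|w'_\infty|<\delta$, but we can choose $\delta$ so small that this contradicts $w_k$ being close to $w_\infty$ while $|w_\infty|=|w'_\infty|$. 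Thus $(w_k)_k$ has a unique accumulation point, and being eventually bounded it converges to that point, which lies in $\Dc$. I expect the delicate part to be making this last separation argument fully rigorous without circularity—choosing the order of quantifiers on $\delta$, $k_0$, and the subsequences correctly—whereas steps establishing \eqref{eq:estimate} itself are essentially a one-line consequence of Proposition \ref{prop:main} and the definition of $w_k$.
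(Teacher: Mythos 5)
Your derivation of \eqref{eq:estimate} is exactly the paper's argument: the lower bound comes from \eqref{eq:wu1}, and the upper bound by contradiction from \eqref{strictineq} together with the optimality of $w_k$, using $w_k\in K$ and $\overline{w}\in D$. Your next step — $\lim_{k\to\infty}|w_k|=|\overline{w}|$ and every accumulation point of the bounded sequence $(w_k)_k$ lies in $\Dc$ — also matches the paper, which at that point simply extracts a convergent subsequence $w_{k_j}\to\widetilde{w}\in K$, observes $|\widetilde{w}|=|\overline{w}|$, hence $\widetilde{w}\in\Dc$, and stops; the paper offers no further argument for uniqueness of the accumulation point.

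The additional step you attempt — excluding two distinct accumulation points $w_\infty\neq w_\infty'$ in $\Dc$ by ``re-running'' Proposition \ref{prop:main} — does not work as sketched, and this is the genuine gap in your proposal. Proposition \ref{prop:main} is purely norm-based: it gives $u(k,w)<u(k,\hat{w})$ only under the hypothesis $|w|-|\hat{w}|\ge\delta$. Two distinct elements of $\Dc$ have the same (minimal) norm, and for $w_k$ close to $w_\infty$ one has $|w_k|-|w_\infty'|=|w_k|-|w_\infty|\to 0$, so the hypothesis is never met along such a subsequence, no matter how small you take $\delta$; in particular $|w_k|-|w_\infty'|<\delta$ is perfectly consistent with $w_k$ being near $w_\infty$, and the asserted contradiction $u(k,w_k)<u(k,w_k)$ never materializes. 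Indeed, nothing proved in the paper distinguishes elements of $\Dc$ of equal norm (in the symmetric example with $\Dc=\{w^u,-w^u\}$ one can have $u(k,w^u)=u(k,-w^u)$ for every $k$), so uniqueness of the limit cannot be extracted from Proposition \ref{prop:main} alone. What your argument — and the paper's own proof — actually establishes is that $|w_k|\to\inf_{w\in D}|w|$ and that all accumulation points of $(w_k)_k$ lie in $\Dc$, equivalently $\mathrm{dist}(w_k,\Dc)\to 0$; this yields convergence of the full sequence exactly when $\Dc$ is a singleton (e.g. $D$ convex), and otherwise only convergence along a subsequence, which is all the paper's proof delivers as well.
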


\begin{proof}
Fix $\delta>0$ and take $k\ge k_0.$ Assume that  $|w_k|-|\overline{w}|\ge \delta$. Then, as $w_k \in K$,
\eqref{strictineq} implies $u(k,w_k) < u(k,\overline{w}),$  which contradicts the definition of $w_k$. Thus, $|w_k|-|\overline{w}|< \delta$ and \eqref{eq:estimate} is proved as for all $w \in D$, $|w|\geq |\overline{w}|$, see  \eqref{eq:wu1}. \\
As  $\delta>0$ was arbitrary, $\lim_{k\to \infty} |w_k|=|\overline{w}|.$
As $(w_k)_k \subseteq K$, there exists a subsequence of $(w_k)_k$ (which we still denote by $(w_k)_k$), converging to some $\widetilde{w} \in K$. By continuity of $w\mapsto |w|$ we have that 
$
|\widetilde{w}|=|\lim_{k\to \infty} w_k|=\lim_{k\to \infty} |w_k|=|\overline{w}|.$ 
So, $\widetilde{w} \in \Dc$ and the claim follows.
\end{proof}

\noindent {\bf Proof of Theorem \ref{thm:main}.}\\
We assume that $U: \R \to \R$ is non-decreasing, non-constant, differentiable, concave and satisfies  \eqref{mino}. First note that 
$U^-(x)\le C_1\rq{}(1+|x|^{p})$  for all $x\in \R$ with the constant $C_1\rq{}:=C_1 -U(-\underline{x})>0. $
Let $w \in D.$ 
 \begin{align*} 
 u(k, w) & 
 \ge -\sup_{\Q\in B_k(\P)} \E_\Q[U^-(\x+\x\langle w, X\rangle)]  \ge -C_1\rq{}\sup_{\Q\in B_k(\P)} \E_\Q[1+|\x+\x\langle w, X\rangle|^{p}]. 
 \end{align*}
Using the Cauchy-Schwarz inequality, we get that 
$$1+|\x+\x\langle w, X\rangle|^{p} \leq 1+ |\x|^p(1 + |w||X|)^p \leq 1+ |\x|^p2^{p-1}(1 + |w|^p|X|^p).$$
 So, using  \eqref{triangle} we obtain
 \begin{align} 
 \label{eq:finim}
 \begin{split}
 u(k, w) & \ge -\sup_{\Q\in B_k(\P)} \E_\Q[U^-(\x+\x\langle w, X\rangle)] \\
& \ge -C_1\rq{} -C_1\rq{} |\x|^p2^{p-1}(1 + |w|^p\sup_{\Q\in B_k(\P)} \E_\Q[|X|^p])  >-\infty.
\end{split}
 \end{align}
As $D$ is non-empty and closed, $\Dc \neq \emptyset$ and we can choose some $\overline w\in \Dc \subset D.$

Assume first that $U$ is bounded from above. We can apply Lemma \ref{lem:bounded} as $U$ is  non-decreasing, continuous, $\lim_{x\to -\infty} U(x)=-\infty$ and 
$ u(k, \overline w)>  -\infty,$ see \eqref{eq:finim}. 
Thus  $w_k$ exists, $|w_k|\leq K=K(\x,\overline w),$ where $|\overline w|\leq K$. So, Corollary \ref{coro:main} applies with the compact $\{w \in  D : |w| \leq K \}$.

Assume now that either $\mbox{AE}_{+\infty}(U)<1$  or 
$\mbox{AE}_{-\infty}(U)>1$, and that $U$ is not bounded from above. This implies  $\lim_{x\to\infty}U(x)>0$ and thus
 Lemma \ref{lem:bc2} applies. There   
exists some strictly positive constants $\underline{x},\, \overline{x}, \,\underline{\gamma},\, \overline{\gamma}$ satisfying $\underline{\gamma}\leq 1 \leq \overline{\gamma}$ and $\underline{\gamma}<\overline{\gamma}$ and such that 
$U(-\underline{x})<0$ and  $U(\overline x)>0$, \eqref{conc}  holds true 
with $C:=U^-(-\underline x) + U^+(\overline x)>0$. As $\lim_{x\to -\infty} U(x)=-\infty$,  there exists $\lambda^*\geq 1$ such that 
$U(- \lambda^* \underline{x})\leq -(1+ C)$. As 
$$\sup_{\mathbb{Q} \in B_{k}(\mathbb{P} )} \mathbb{E}_{\mathbb{Q}}\left[U^-(x_0+ x_0 \langle \overline w, X \rangle)\right] <\infty,$$
see \eqref{eq:finim}, we can apply 
Corollary \ref{cor:rae-ex} with $x^*=\lambda^* \underline{x}.$ Thus $w_k$ exists, $|w_k|\leq K$ and again Corollary 
\ref{coro:main} applies with the compact $\{w \in  D : |w| \leq K \}$.

\appendix
\section{Remaining proofs from Section \ref{sec:discussion}} %
\label{sec:proofs}

\noindent{\bf Proof of Lemma \ref{lem:-infty}.}\\ 
Let $w\in \R^d \setminus\{0\}$ such that  $\E_{\P}[U(\x+\x\langle w,X\rangle)]<\infty$. 
Let for all $k\in \N$, $x_k:=-k \mbox{sign} (x_0)\frac{w}{|w|^2}.$
By definition we have $\x+\x\langle w, x_k \rangle =\x -k|\x|$. 
We now define 
$$\alpha_{x_k}:=\frac{k^p}{2^{p-1}\left(\frac{k^p}{|w|^p} +C_{\P}^p\right)}\le \left(\frac{k}{C_{\P}+|x_k|}\right)^p, \quad \P_{x_k} := (1-\alpha_{x_k}) \P+\alpha_{x_k}\delta_{x_k}.$$ Then $\P_{x_k}\in B_k(\P)$ by Lemma \ref{lem:1} and we obtain
\begin{align*}
u(k,w) & \le
 \E_{\P_{x_k}} [U(\x+\x\langle w, X\rangle)] \\
 &\le  (1-\alpha_{x_k}) \E_{\P}[ U(\x+\x\langle w, X\rangle)]+ \alpha_{x_k} U(\x+\x\langle w, x_k \rangle)\\
& \le \max(0, \E_{\P}[ U(\x+\x\langle w, X\rangle)]) +\frac{k^p}{2^{p-1}\left(\frac{k^p}{|w|^p} +C_{\P}^p\right)} U(\x -k|\x|) 
\end{align*}
which tends to $ -\infty
$ for $k\to \infty$, showing the claim.\\
\mbox\\

\noindent{\bf Proof of Lemma \ref{lem:ill}.}\\ 
The proof is similar to the proof of Lemma \ref{lem:-infty}. 
Let $w\in \R^d  \setminus\{0\}$ such that   $\E_{\P}[U(\x+\x\langle w,X\rangle)]<\infty$. As $\lim_{x\to -\infty} U(x)=-\infty$, there exists some  $\underline{x}>0$ such that $U(-\underline{x})<0$ and 
if Assumption \ref{ass:1} is not satisfied, there exists a sequence $(y_n)_n$ such that $y_n \leq - \underline{x}$ and 
\begin{align}\label{eq:bound}
U(y_n) \leq - n(1+|y_n|^p)\qquad \forall n\in \N.
\end{align}
We see easily that $\inf_n y_n=-\infty$. 
Taking a subsequence if necessary we can thus assume that $(y_n)_n$ are negative, decreasing and $\lim_{n\to \infty} y_n=-\infty$. Next we define the sequence $(x_n)_n$ of vectors in $\R^d$ via 
\begin{align}
\label{bonchoix}
x_n=\frac{y_n}{|w|^2\x}\,w-\frac{w}{|w|^2},
\end{align} recalling that $w\neq 0$ and $\x\neq 0$. By definition we have $\x+\x\langle w, x_n \rangle =y_n$ for all $n\in \N$. We now define 
$$\alpha_{x_n}:=\frac{k^p}{2^{p-1}(|x_n|^p +C_{\P}^p)}\le \left(\frac{k}{C_{\P}+|x_n|}\right)^p, \qquad \P_{x_n} := (1-\alpha_{x_n}) \P+\alpha_{x_n}\delta_{x_n}.$$ Then $\P_{x_n}\in B_k(\P)$ by Lemma \ref{lem:1} and we obtain recalling \eqref{eq:bound}
\begin{align*}
u(k,w) &\le
 \E_{\P_{x_n}} [U(\x+\x\langle w, X\rangle)] \le  (1-\alpha_{x_n}) \E_{\P}[ U(\x+\x\langle w, X\rangle)]+ \alpha_{x_n} U( y_n)\\
&\le  \max(0, \E_{\P}[ U(\x+\x\langle w, X\rangle)]) -n \alpha_{x_n} (1+|y_n|^p).
\end{align*}
Now recalling \eqref{bonchoix}, $|x_n|^p\le \frac{2^{p-1}}{|w|^p} \max(\frac1{|\x|^p} ,1)(1+|y_n|^p)$  and we get that 
\begin{align*}
n \alpha_{x_n} (1+|y_n|^p) &  \ge 
n\frac{k^p(1+|y_n|^p)}{2^{p-1}\left(\frac{2^{p-1}}{|w|^p} \max\left(\frac1{|\x|^p} ,1\right))(|y_n|^p+1) +C_{\P}^p\right)}\to  +\infty
\end{align*}
when $n\to \infty.$ Thus, 
we obtain that $u(k,w)  =  -\infty,$  
 showing the claim.\\
\mbox\\

\noindent {\bf Proof of Lemma \ref{lem:bc2}.}\\
Assume that $\mbox{AE}_{+\infty}(U)<1$ and $\lim_{x\to \infty} U(x) >0$. Then \eqref{bajka+} holds true. 
Note that if $x\leq \overline{x}$  and $\lambda\geq 1$, as $U$ is non-decreasing,  \eqref{bajka+} implies that 
\begin{align}\label{bajka2}
U(\lambda x) & \leq  U(\lambda \overline{x}) \le \lambda^{\gamma} U(\overline{x}). 
\end{align}
We claim that \eqref{bajka+} and \eqref{bajka2}  imply the following: for all $x\in \R$ and $\lambda\geq 1$ 
\begin{align}
\label{bajka++}
U(\lambda x) & \leq  \lambda^{\gamma}( U(x) + C).
\end{align}
If $x \geq  \overline{x}$, \eqref{bajka++} is a direct consequence of \eqref{bajka+}. 
If  $-\underline{x}\le x\le \overline{x},$  as $C=U^-(-\underline x) + U^+(\overline x)>0,$
\eqref{bajka2} implies that 
\begin{align*}
U(\lambda x)\le  \lambda^{\gamma} U(\overline{x})=  \lambda^{\gamma} (U(-\underline{x})+C) \le  \lambda^{\gamma} (U(x)+C).
\end{align*}
If $x<-\underline{x}\le 0$, %
$U(\lambda x)<0$. We distinguish two cases: (i) $U(x)+C\ge 0$: in this case \eqref{bajka++} is trivial. (ii) $U(x)+C< 0$: recalling that $\lambda\ge \lambda^\gamma$ we conclude from \eqref{bajka--} that
\begin{align*}
U(\lambda x) & \leq \lambda (U(x) + C)\le \lambda^\gamma (U(x) + C).
\end{align*} 
This shows \eqref{bajka++}.
Recalling \eqref{bajka--}, we can choose $\underline{\gamma}:=\gamma <1=:\overline{\gamma}$ and 
\eqref{conc} is proved. 
Assume now that  $\mbox{AE}_{-\infty}(U)>1$. Then \eqref{bajka-} holds true and implies that  for all $x\leq - \underline{x}$  and $\lambda\geq 1$
\begin{align*}
U(\lambda x) & \leq  \lambda^{\gamma\rq{}} U(x)\le  \lambda^{\gamma\rq{}} (U(x)+C).
\end{align*}
Assume now that $x\geq 0$. Then, for all $\lambda\geq 1$, $-\underline x < 0 \leq x \leq \lambda x$. As $U$ is concave 
 \begin{align*}
\frac{U(\lambda x)-U(-\underline x)}{\lambda x+\underline x} & \leq  \frac{U(x)-U(-\underline x)}{ x+\underline x}.
\end{align*}
As $   \frac{\lambda x+\underline x}{ x+\underline x}\leq \lambda \leq \lambda^{\gamma\rq{}}$  and $U(-\underline x) \leq 0,$ 
 \begin{align*}
U(\lambda x) 
& \leq \lambda^{\gamma\rq{}} (U(x)+U^-(-\underline x))\le  \lambda^{\gamma\rq{}} (U(x)+C).
\end{align*}
Assume that $ - \underline{x} \leq x\leq 0.$ Then $\lambda x \leq 0 \leq \overline x$ and $U(\lambda x)
 \leq U(\overline x)=U^+(\overline x).$ 
As $U$ is non-decreasing  and $\lambda^{\gamma\rq{}} \geq 1$, we obtain that 
 \begin{align*}
\lambda^{\gamma\rq{}}( U( x) +C) 
& \geq \lambda^{\gamma\rq{}}( U( -\underline x) +C) =\lambda^{\gamma\rq{}} U^+(\overline x)\ge  U^+(\overline x).
\end{align*}
So we have proved that \eqref{bajka++} holds true with $\gamma\rq{}$ instead of $\gamma$. 
Recalling \eqref{bajka--}, we can choose $\underline{\gamma}:=1 <\gamma\rq{}=:\overline{\gamma}$ and 
\eqref{conc} is  proved. 
\section{Further results on $u(k,w)$ } \label{sec:appendix}
In this part of the appendix we dive deeper into the analysis of the max-min problem $u(k,w)$; in particular we are interested in understanding when the infimum are attained. It turns out that this is the case under some slightly stricter growth assumptions than those stated in Assumption \ref{ass:1} together with a constraint on the measure $\P$. 
\begin{assumption}\label{ass:2}
Assume that $p>1$ and that there exist $C_2>0$ and $1 \le r<p$ such that 
\begin{align*}
U^-(x)\le C_2(1+|x|^{r}) \qquad\text{ for all }x\in \R.
\end{align*}
\end{assumption}
\begin{lemma}\label{lem:ass1}
Assume that $U$ is non-decreasing and concave, Assumption \ref{ass:2} implies that  there exists $C_2\rq{}>0$ such that
\begin{align}\label{eq:growth}
|U(x)|\le C_2\rq{}(1+|x|^{r}) \qquad\text{ for all }x\in \R.
\end{align}
\end{lemma}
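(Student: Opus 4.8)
The bound on $U^-$ is given for free by Assumption \ref{ass:2}, so the entire content of the lemma is an upper bound on $U^+(x)$ of order $|x|^r$. Since $r\ge 1$, it suffices to produce an \emph{affine} majorant of $U$ on $[0,\infty)$ and a constant majorant on $(-\infty,0]$, and then absorb the affine term into $1+|x|^r$. The affine majorant on the positive half-line is exactly what concavity gives us, anchored at a point where we already control the value of $U$.

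\textbf{Step 1: the negative half-line.} For $x\le 0$, monotonicity of $U$ gives $U(x)\le U(0)$, hence $U^+(x)\le U^+(0)$, a constant. Combined with $U^-(x)\le C_2(1+|x|^r)$ this already yields $|U(x)|\le (C_2+U^+(0))(1+|x|^r)$ for all $x\le 0$.

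\textbf{Step 2: the positive half-line via concavity.} Fix the anchor $a=-1$. For any $x>0$ the point $0$ lies strictly between $a$ and $x$, so concavity of $U$ (decreasing secant slopes) gives
\[
U(x)\ \le\ (x+1)\,U(0)\ +\ x\,\bigl(-U(-1)\bigr)\ \le\ (x+1)\,|U(0)|\ +\ x\,U^-(-1).
\]
By Assumption \ref{ass:2}, $U^-(-1)\le 2C_2$, so $U(x)\le (|U(0)|+2C_2)(1+x)$ for all $x\ge 0$. Using $r\ge 1$ we have $1+x\le 2(1+x^r)$ for every $x\ge 0$ (treating $x\le 1$ and $x\ge 1$ separately), hence $U^+(x)\le 2(|U(0)|+2C_2)(1+|x|^r)$ on $[0,\infty)$; together with the $U^-$ bound this controls $|U(x)|$ there as well.

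\textbf{Step 3: conclude.} Taking $C_2'$ to be the maximum of the constants produced in Steps 1 and 2 gives $|U(x)|\le C_2'(1+|x|^r)$ for all $x\in\R$. There is no genuine obstacle here; the only point requiring a little care is writing the concavity inequality for the triple $-1<0<x$ in the correct direction and choosing a convenient anchor (here $x=-1$) at which Assumption \ref{ass:2} is directly applicable. Note that neither $p>1$ nor $r<p$ is used in this lemma, only $r\ge 1$.
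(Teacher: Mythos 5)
Your proof is correct, but it takes a different route from the paper's. The paper locates the zero crossing $y$ of $U$ and uses the midpoint concavity inequality $U(y)\ge \tfrac12\bigl(U(y+x)+U(y-x)\bigr)$ to obtain the reflection bound $U^+(y+x)\le U^-(y-x)$, and then invokes Assumption \ref{ass:2} at the far-negative reflected point $y-x$, which transfers the exact growth order of $U^-$ to $U^+$ (and would work even if $r<1$ were allowed). You instead use the three-chord inequality through the anchors $-1<0<x$ to get the affine majorant $U(x)\le (1+x)U(0)-xU(-1)$ on $[0,\infty)$, use monotonicity on $(-\infty,0]$, and absorb the linear term into $1+|x|^r$ via $r\ge 1$. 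Your argument is more elementary in that it only needs $U$ to be real-valued at one negative point (Assumption \ref{ass:2} enters only through $U^-(-1)\le 2C_2$, which you could replace by the constant $|U(-1)|$) together with $r\ge1$, and it sidesteps the existence of a zero crossing, which in the paper's argument requires $U$ to take both signs (the degenerate cases $U\le 0$ or $U$ constant have to be handled separately there); the price is that your bound on $U^+$ is only linear before the absorption step, so it genuinely uses $r\ge 1$, whereas the paper's reflection argument yields the sharper pointwise estimate $U^+(z)\le U^-(2y-z)$. Both establish \eqref{eq:growth} under the stated hypotheses.
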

\begin{proof}
Let $y\in \R$ be the smallest number satisfying $U(y)>0$. By continuity of $U$ we have $U(y)=0$. If $z\leq y,$ the monotonicity of $U$ implies that $U(z)\leq U(y)=0$ and  \eqref{eq:growth} holds true. 
If $z >y$, setting $x=z-y>0,$ concavity and monotonicity of $U$ clearly imply that  
$$0=U(y)\ge\frac{1}{2} (U(y+x)+U(y-x))=\frac{1}{2} (U^+(y+x)-U^-(y-x)),$$
so that $U^+(y+x)\le U^-(y-x)$. In particular \eqref{eq:growth} thus implies
\begin{align*}
U^+(z) = U^+(y+x) &\le U^-(y-x) \le C_2 (1+|y-x|^r) \le  C_2'(1+|z|^{r}),
\end{align*}
writing $C_2':=C_2(1+ 2^{r-1}|2y|^r+2^{r-1})>C_2.$ 
Thus, for all $z\in \R,$ \eqref{eq:growth} implies that 
\begin{align*}
|U(z)|=U^+(z)+U^-(z) \le 2C_2'(1+|z|^{r}).
\end{align*}
\end{proof}

\begin{lemma}
\label{lemmex}
Fix $w\neq 0$. Under Assumption \ref{ass:2} there exists an optimizer $\Q^\ast$ of  \eqref{eq:prob}, i.e. 
\begin{align*}
u(k,w)= \inf_{\Q\in B_k(\P)} \E_\Q[U(\x+\x\langle w, X\rangle)]=\E_{\Q^\ast}[U(\x+\x\langle w, X\rangle)].
\end{align*}
Moreover, there exists an optimizer 
 $(X^\ast,Z^\ast)$ of \eqref{eq:Z}, i.e. 
 \begin{align*}
u(k, w)= \inf_{X^\mu\sim \P, \, Z^\mu \in B^\mu_k } \E_{\mu} [ U(\x+\x\langle w,X^\mu+ Z^\mu\rangle)]= \E_{\mu} [ U(\x+\x\langle w,X^\ast+ Z^\ast\rangle)],
\end{align*}
such that $X^*\sim \P$ under $\mu,$  $Z^* \in B^\mu_k $ and 
\begin{align}
\label{eq:normast}
Z^\ast=-\mathrm{sign}(x_0)|Z^\ast| \frac{w}{|w|}.
\end{align}
\end{lemma}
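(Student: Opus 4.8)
The plan is to produce the worst-case measure $\Q^\ast$ by a compactness argument in the weak topology, then to transfer it into the formulation \eqref{eq:Z} via the measurable-isomorphism construction already used in the proof of Lemma \ref{lem:villani}, and finally to symmetrize the resulting perturbation exactly as in Lemma \ref{lem:2} so that it has the radial form \eqref{eq:normast}.

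For the existence of $\Q^\ast$, I would first note that by \eqref{triangle} every $\Q\in B_k(\P)$ satisfies $\E_\Q[|X|^p]\le (C_\P+k)^p$, so $B_k(\P)$ is tight and hence relatively weakly compact by Prokhorov's theorem; since $\mathcal{W}_p(\P,\cdot)$ is weakly lower semicontinuous, $B_k(\P)$ is also weakly closed, hence weakly compact. By Lemma \ref{lem:ass1} together with the Cauchy--Schwarz inequality, the map $x\mapsto U(\x+\x\langle w,x\rangle)$ is continuous and satisfies $|U(\x+\x\langle w,x\rangle)|\le C(1+|x|^{r})$ with $r<p$; combined with the uniform $p$-th moment bound above this makes the family $\{\,U(\x+\x\langle w,X\rangle):\Q\in B_k(\P)\,\}$ uniformly $\Q$-integrable, so that $\Q\mapsto\E_\Q[U(\x+\x\langle w,X\rangle)]$ is continuous for the weak topology on $B_k(\P)$. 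Weierstrass' theorem then gives $\Q^\ast\in B_k(\P)$ attaining the infimum in \eqref{eq:prob}.

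Next I would realize $\Q^\ast$ inside \eqref{eq:Z}. Let $\pi^\ast\in\Pi(\P,\Q^\ast)$ be an optimal coupling (which exists, see e.g.\ \cite{villani2008optimal}), so $\E_{\pi^\ast}[|X-Y|^p]^{1/p}=\mathcal{W}_p(\P,\Q^\ast)\le k$. Arguing as in the proof of Lemma \ref{lem:villani}, choose a measurable map $T^\ast:(\Omega,\mu)\to(\R^d\times\R^d,(X,Y-X)_*\pi^\ast)$ with $T^\ast_*\mu=(X,Y-X)_*\pi^\ast$, and call $X^\ast$ its first and $\tilde Z^\ast$ its second component. Then $X^\ast\sim\P$ under $\mu$, $(\E_\mu[|\tilde Z^\ast|^p])^{1/p}=\E_{\pi^\ast}[|Y-X|^p]^{1/p}\le k$, and $\E_\mu[U(\x+\x\langle w,X^\ast+\tilde Z^\ast\rangle)]=\E_{\pi^\ast}[U(\x+\x\langle w,Y\rangle)]=\E_{\Q^\ast}[U(\x+\x\langle w,X\rangle)]=u(k,w)$, so $(X^\ast,\tilde Z^\ast)$ is an optimizer of \eqref{eq:Z}. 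To obtain \eqref{eq:normast}, I would then set, exactly as in Lemma \ref{lem:2}, $Z^\ast:=-\mathrm{sign}(\x)|\tilde Z^\ast|\,w/|w|$, so that $|Z^\ast|=|\tilde Z^\ast|$, hence $Z^\ast\in B^\mu_k$ and \eqref{eq:normast} holds; since $\x+\x\langle w,X^\ast+Z^\ast\rangle=\x+\x\langle w,X^\ast\rangle-|\x||w||\tilde Z^\ast|\le\x+\x\langle w,X^\ast+\tilde Z^\ast\rangle$ by Cauchy--Schwarz, monotonicity of $U$ gives $\E_\mu[U(\x+\x\langle w,X^\ast+Z^\ast\rangle)]\le u(k,w)$, while the reverse inequality is immediate because $(X^\ast,Z^\ast)$ is admissible in \eqref{eq:Z}. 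Thus $(X^\ast,Z^\ast)$ is an optimizer of the required form.

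I expect the main obstacle to be the weak continuity of $\Q\mapsto\E_\Q[U(\x+\x\langle w,X\rangle)]$ on $B_k(\P)$: this is precisely where the strict moment gap $r<p$ in Assumption \ref{ass:2} (via Lemma \ref{lem:ass1}) is essential, since it is what converts the uniform $p$-th moment bound on $B_k(\P)$ into uniform integrability of the integrand and thereby prevents mass from escaping to infinity along a minimizing sequence. The remaining ingredients---weak compactness of $B_k(\P)$, existence of an optimal coupling, the transfer to $(\Omega,\mu)$, and the symmetrization---are routine given the earlier lemmas.
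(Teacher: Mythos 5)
Your proposal is correct and follows essentially the same route as the paper: existence of $\Q^\ast$ by a compactness argument on $B_k(\P)$ that hinges on the moment gap $r<p$, transfer to $(\Omega,\mu)$ via the measurable-map construction from the proof of Lemma \ref{lem:villani}, and the radial symmetrization exactly as in Lemma \ref{lem:2}. The only cosmetic difference is that you establish weak compactness plus uniform integrability of the integrand directly, whereas the paper packages the same mechanism as $\mathcal{W}_r$-compactness of $B_k(\P)$ (Lemma \ref{lem:compact}) combined with the growth bound \eqref{eq:growth} and the continuity of integrals under $\mathcal{W}_r$-convergence.
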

\begin{proof}
We first prove existence of optimizers for \eqref{eq:prob}. Take a sequence $(\Q_n)_n$ in $B_k(\P)$ of $1/n$-optimizers of $u(k,w)$, i.e. 
$
u(k,w)\le \E_{\Q_n}[U(\x+\x\langle w, X\rangle)]\le u(k,w)+{1}/{n}.
$ 
As $B_k(\P)$ is compact wrt. $W_r$ by Lemma \ref{lem:compact} below, there exists a convergent subsequence, still called $(\Q_n)_n$, converging to some $\Q^\ast \in B_k(\P)$ in $W_r$. By \eqref{eq:growth}  and Cauchy-Schwarz inequality 
\begin{align*}
|U(\x+\x\langle w, X\rangle)| &\le  C_2\rq{}(1+|\x+\x\langle w, X\rangle|^{r}) \\
&\le  C_2\rq{}(1+|\x|^r 2^{r-1}(1+| w|^r |X|^{r})) \le  C(1+|X|^{r})
\end{align*}
and [Villani, 2008, Def. 6.8 p108] shows that   
\begin{align*}
u(k,w)=\lim_{n\to \infty} \E_{\Q_n}[U(\x+\x\langle w, X\rangle)] =\E_{\Q^\ast} [U(\x+\x\langle w, X\rangle)].
\end{align*}
This concludes the proof of existence of optimizers for \eqref{eq:prob}. 
Using the same arguments as in the proof of Lemma \ref{lem:villani}  there exists $( X^\ast,\tilde{Z})$ such that $X^\ast\sim \P$ under $\mu,$  $\tilde{Z} \in B_k^{\mu}$ and
\begin{align}
\label{eq:opQ}
u(k,w)=\E_{\Q^\ast} [U(\x+\x\langle w, X\rangle)]=\E_\mu [U(\x+\x\langle w,  X^*+\tilde{Z}\rangle)].
\end{align}
Define $Z^\ast:=-\text{sign}(\x)|\tilde{Z}| \frac{w}{|w|}.$ 
Then trivially $|Z^\ast|=|\tilde{Z}|$ and \eqref{eq:normast} holds true. Using Lemma \ref{lem:villani} and \eqref{eq:opQ}, we get that 
\begin{align*}
u(k,w) &= \inf_{X^\mu\sim \P, \, Z^\mu \in B^\mu_k } \E_{\mu} [ U(\x+\x\langle w,X^\mu+ Z^\mu\rangle)] \\
&= \E_\mu[U(\x+\x\langle w, X^\ast+\tilde{Z}\rangle)]\\
&\stackrel{\text{(CS)}}{\ge} \E_\mu[U(\x+\x\langle w, X^\ast\rangle-|\x||w||\tilde{Z}|)]\\
&=  \E_\mu[U(\x+\x\langle w, X^\ast\rangle+\x\langle w, Z^\ast \rangle)]\\
& \ge \inf_{X^\mu\sim \P, \, Z^\mu \in B^\mu_k } \E_{\mu} [ U(\x+\x\langle w,X^\mu+ Z^\mu\rangle)].
\end{align*}
This shows that $(X^\ast,Z^\ast)$ is also a minimizer and concludes the proof.
\end{proof}

\begin{lemma}\label{lem:compact}
Let $1\le r<p$. Then the $p$-Wasserstein ball $B_k(\P)$ is compact wrt. $W_{r}$.
\end{lemma}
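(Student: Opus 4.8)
The plan is to exploit the standard characterization of relative compactness in Wasserstein spaces: a subset of $\mathcal{P}_r(\R^d)$ is relatively compact with respect to $W_r$ if and only if it is tight and has uniformly integrable $r$-th moments, i.e. $\sup_{\Q} \E_\Q[|X|^r \mathds{1}_{\{|X|>R\}}] \to 0$ as $R\to\infty$; see e.g. \cite[Definition 6.8 and Theorem 6.9]{villani2008optimal}. Since $B_k(\P)$ is moreover $W_r$-closed (the $p$-Wasserstein ball is $W_p$-closed, hence $W_r$-closed since $W_r \le W_p$, and closedness plus relative compactness gives compactness), it suffices to verify these two conditions.

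First I would establish a uniform $p$-th moment bound: by the triangle inequality \eqref{triangle}, every $\Q\in B_k(\P)$ satisfies $\E_\Q[|X|^p]^{1/p} \le C_\P + k =: M < \infty$. This single bound yields both required properties at once. For tightness, Markov's inequality gives $\Q(|X|>R) \le M^p/R^p$ uniformly in $\Q\in B_k(\P)$, so $\sup_{\Q\in B_k(\P)} \Q(|X|>R)\to 0$ as $R\to\infty$. For uniform integrability of the $r$-th moments, I would write, for any $\Q\in B_k(\P)$,
\begin{align*}
\E_\Q[|X|^r \mathds{1}_{\{|X|>R\}}] = \E_\Q\Big[|X|^p \cdot |X|^{r-p}\mathds{1}_{\{|X|>R\}}\Big] \le R^{r-p}\, \E_\Q[|X|^p] \le M^p R^{r-p},
\end{align*}
which tends to $0$ as $R\to\infty$ uniformly in $\Q$, precisely because $r<p$ forces the exponent $r-p<0$. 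Hence $B_k(\P)$ has uniformly integrable $r$-th moments.

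Combining tightness with uniform $r$-th moment integrability, \cite[Theorem 6.9]{villani2008optimal} shows $B_k(\P)$ is relatively compact in $(\mathcal{P}_r(\R^d), W_r)$. It remains to note that $B_k(\P)$ is $W_r$-closed: if $\Q_n\to\Q$ in $W_r$ with $\Q_n\in B_k(\P)$, then $\Q_n\to\Q$ weakly, and lower semicontinuity of $W_p$ with respect to weak convergence (which holds since the cost $|x-y|^p$ is lower semicontinuous and the marginals are fixed at $\P$ on one side, cf. \cite[Remark 6.12]{villani2008optimal}) gives $\mathcal{W}_p(\P,\Q)\le\liminf_n \mathcal{W}_p(\P,\Q_n)\le k$, so $\Q\in B_k(\P)$. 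A closed subset of a relatively compact set is compact, which proves the claim. The only mildly delicate point is bookkeeping the distinction between $W_r$ and $W_p$ convergence and closedness; there is no real obstacle since the uniform $p$-th moment bound does all the work, and I expect this to be the easiest lemma in the appendix.
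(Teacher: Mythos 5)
Your proof is correct and follows essentially the same route as the paper: both rest on the uniform moment bound $\sup_{\Q\in B_k(\P)}\E_\Q[|X|^p]\le (C_\P+k)^p$ from \eqref{triangle}, Prokhorov/tightness for weak precompactness, weak lower semicontinuity of $\mathcal{W}_p$ (Villani, Remark 6.12) for closedness, and the criterion of \cite[Theorem 6.9]{villani2008optimal} to upgrade weak convergence to $W_r$-convergence. The only cosmetic difference is that you verify uniform integrability of the $r$-th moments directly via the one-line bound $R^{r-p}(C_\P+k)^p$, whereas the paper checks convergence of $\E_{\Q_n}[|X|^r]$ along the subsequence with a H\"older--Markov estimate; the two are equivalent, your bound being implicit in the paper's chain of inequalities.
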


\begin{proof}
Recall that \eqref{triangle} shows that $\sup_{\Q\in B_k(\P)}\E_{\Q}[|X|^p]^{1/p}<\infty$.
Then an application of Prokhorov's theorem shows that $B_k(\P)$ is weakly precompact. Hence, for every sequence of measures $(\Q_n)_{n}$ in $B_k(\P)$ there exists a subsequence, which we also call $(\Q_n)_{n}$ and a measure $\Q$ such that $\Q_n$ converges weakly to $\Q$. As $W_p$ is weakly lower semicontinuous (see \cite[Rem. 6.12, p.109]{villani2008optimal}), this implies $\Q \in B_k(\P)$. It remains to show that $\Q_n$ converge to $\Q$ in $W_r$. By \cite[Definition 6.8 and Theorem 6.9]{villani2008optimal} it is sufficient to show that $\lim_{n\to \infty} \E_{\Q_n}[|X|^r]=\E_{\Q}[|X|^r]$. For this we note that
\begin{align*}
& \lim_{n\to \infty} \E_{\Q_n}[|X|^r] = \lim_{l \to \infty} \lim_{n\to \infty} \left(\E_{\Q_n}[|X|^r\mathds{1}_{\{|X|\le l\}}] + \E_{\Q_n}[|X|^r\mathds{1}_{\{|X|> l\}}]\right)\\
& \quad \le \limsup_{l\to \infty}  \limsup_{n\to \infty} \E_{\Q_n}[(|X|\wedge l)^r] + \limsup_{l \to \infty} \sup_{n\in \N}  \E_{\Q_n}[|X|^r\mathds{1}_{\{|X|> l\}}]\\
& \quad \le \limsup_{l\to \infty} \E_{\Q}[(|X|\wedge l)^r] + \limsup_{l \to \infty} \sup_{n\in \N}\Big(\E_{\Q_n}[|X|^p]\Big)^{r/p} \Q_n(|X|>l)^{(p-r)/p}\\
& \quad \le \E_{\Q}[|X|^r] +\limsup_{l \to \infty} \sup_{n\in \N}\Big(\E_{\Q_n}[|X|^p]\Big)^{r/p} \Bigg(\frac{\E_{\Q_n}[|X|^p]}{l^p}\Bigg)^{(p-r)/p}\\
& \quad \le \E_{\Q}[|X|^r] +\limsup_{l \to \infty}  \Bigg(\frac{\sup_{n\in \N}\E_{\Q_n}[|X|^p]}{l^{p-r}}\Bigg) \le \E_{\Q}[|X|^r] +\limsup_{l \to \infty}  \Bigg(\frac{(C_{\P}+k)^p}{l^{p-r}}\Bigg) \\
&\quad = \E_{\Q}[|X|^r],
\end{align*}
where we have used weak convergence and H\"older's inequality for the second inequality, the dominated convergence theorem and Markov's inequality for the third inequality and \eqref{triangle} and $r<p$ for the last one. 
Furthermore,
\begin{align*}
\lim_{n\to \infty} \E_{\Q_n}[|X|^r]&= \lim_{l \to \infty} \lim_{n\to \infty} (\E_{\Q_n}[|X|^r\mathds{1}_{\{|X|\le l\}}+|X|^r\mathds{1}_{\{|X|> l\}}])\\
&\ge \liminf_{l\to \infty}  \liminf_{n\to \infty} \E_{\Q_n}[(|X|\wedge l)^r]= \E_{\Q}[|X|^r].
\end{align*}
This concludes the proof.
\end{proof}

Under slightly stricter assumptions we can strenghten Lemma \ref{lem:2} in the following way:
\begin{lemma}\label{lem:Z}
Fix $w\neq 0$ and assume that $U$ is strictly increasing and satisfies Assumption \ref{ass:2}. Then any optimizer $(X^\ast,Z^\ast)$ of \eqref{eq:Z} satisfies
\begin{align*}
Z^\ast=-\mathrm{sign}(x_0)|Z^\ast| \frac{w}{|w|}.
\end{align*}
Furthermore $\|Z^*\|_{L_{\mu}^p(\R^d)}=k$ holds for any optimizer of \eqref{eq:Z}.
\end{lemma}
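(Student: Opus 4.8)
The plan is to upgrade Lemma~\ref{lem:2} by exploiting two extra ingredients available under Assumption~\ref{ass:2}: an optimizer of~\eqref{eq:Z} genuinely exists (Lemma~\ref{lemmex}), and the objective $\E_\mu[U(\x+\x\langle w,X^\mu+Z^\mu\rangle)]$ is finite for \emph{every} feasible pair by the linear growth bound~\eqref{eq:growth} of Lemma~\ref{lem:ass1}; in particular $u(k,w)$ is finite and all expectations below are well-defined. First I would fix any optimizer $(X^\ast,Z^\ast)$ and set $\tilde Z:=-\mathrm{sign}(\x)|Z^\ast|\frac{w}{|w|}$, so that $|\tilde Z|=|Z^\ast|$, hence $\tilde Z\in B^\mu_k$ and $(X^\ast,\tilde Z)$ is feasible. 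Cauchy--Schwarz gives $\x\langle w,Z^\ast\rangle\ge-|\x||w||Z^\ast|=\x\langle w,\tilde Z\rangle$ pointwise, so $\x+\x\langle w,X^\ast+Z^\ast\rangle\ge\x+\x\langle w,X^\ast+\tilde Z\rangle$ $\mu$-a.s., and since $U$ is non-decreasing the same inequality holds after applying $U$. Integrating and using optimality of $(X^\ast,Z^\ast)$ together with feasibility of $(X^\ast,\tilde Z)$, both expectations equal $u(k,w)$, so the nonnegative integrand $U(\x+\x\langle w,X^\ast+Z^\ast\rangle)-U(\x+\x\langle w,X^\ast+\tilde Z\rangle)$ integrates to $0$ and thus vanishes $\mu$-a.s. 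Injectivity of $U$ (strict monotonicity) together with $\x\ne0$ then yields $\langle w,Z^\ast\rangle=\langle w,\tilde Z\rangle=-\mathrm{sign}(\x)|w||Z^\ast|$ $\mu$-a.s.; this is the equality case of Cauchy--Schwarz, so on $\{Z^\ast\ne0\}$ one has $Z^\ast=\lambda w$ with $\lambda=-\mathrm{sign}(\x)|\lambda|$, and a one-line computation gives $Z^\ast=-\mathrm{sign}(\x)|Z^\ast|\frac{w}{|w|}=\tilde Z$ $\mu$-a.s.

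Next I would prove that the budget is saturated. Using the radial form just obtained, the optimal value rewrites as $u(k,w)=\E_\mu[U(\x+\x\langle w,X^\ast\rangle-|\x||w||Z^\ast|)]$. Suppose $\|Z^\ast\|_{L_{\mu}^p(\R^d)}=k'<k$. Pick a constant $\eta>0$ with $\E_\mu[(|Z^\ast|+\eta)^p]^{1/p}=k$ --- possible by the intermediate value theorem, since this quantity is continuous in $\eta$, equals $k'$ at $\eta=0$ and tends to $\infty$ --- and set $Z':=-\mathrm{sign}(\x)(|Z^\ast|+\eta)\frac{w}{|w|}$, so that $Z'\in B^\mu_k$ and $|Z'|=|Z^\ast|+\eta>|Z^\ast|$ at every point. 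Since $|\x||w|>0$ and $U$ is strictly increasing, $U(\x+\x\langle w,X^\ast\rangle-|\x||w||Z'|)<U(\x+\x\langle w,X^\ast\rangle-|\x||w||Z^\ast|)$ pointwise, and both sides are $\mu$-integrable by~\eqref{eq:growth} (as $X^\ast\sim\P$, $Z'\in B^\mu_k$ and $r<p$), so $\E_\mu[U(\x+\x\langle w,X^\ast\rangle-|\x||w||Z'|)]<u(k,w)$. But $(X^\ast,Z')$ is feasible in the representation~\eqref{eq:absolute} of $u(k,w)$, a contradiction; hence $\|Z^\ast\|_{L_{\mu}^p(\R^d)}=k$.

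The only genuinely delicate point is the passage from equality of integrals to pointwise equality in the first step: this is exactly where strict monotonicity of $U$ is needed, to deduce $a=b$ from $U(a)=U(b)$, and one then has to track the sign of $\x$ carefully when resolving the Cauchy--Schwarz equality case. Everything else --- the finiteness bookkeeping and the budget-saturation perturbation --- is routine once the radial form is in hand.
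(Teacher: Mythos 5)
Your argument is correct and rests on the same ingredients as the paper's proof: strict monotonicity of $U$ combined with the strict Cauchy--Schwarz inequality off the ray $-\mathrm{sign}(\x)\frac{w}{|w|}$ forces the radial form (you phrase it as the equality case, the paper as a contradiction on the set $A:=\{Z^\ast\neq -\mathrm{sign}(\x)|Z^\ast|\frac{w}{|w|}\}$, which is the same argument), and both rely on the growth bound \eqref{eq:growth} to justify the integrability bookkeeping. The one genuine difference is the saturation step: the paper first rules out $Z^\ast\equiv 0$ by a small $\delta$-shift and then rescales $Z^\ast$ multiplicatively to $\frac{k}{\|Z^\ast\|_{L^p_\mu}}|Z^\ast|$, whereas your additive perturbation $|Z^\ast|+\eta$ with $\eta$ chosen so that the $L^p_\mu$-norm equals $k$ handles the case $Z^\ast\equiv 0$ automatically and thus merges the paper's two cases into one slightly cleaner argument; both perturbations yield a feasible competitor with strictly smaller objective, contradicting optimality.
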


\begin{proof}
Take an optimizer $(X^\ast,Z^\ast)$ of \eqref{eq:Z}, which exists by Lemma  \ref{lemmex}, define 
\begin{align*}
A:=\Big\{Z^\ast\neq -\mathrm{sign}(x_0)|Z^\ast|\frac{w}{|w|}\Big\}
\end{align*}
and assume that $\mu(A)>0$. Recall from the Cauchy-Schwarz inequality that we have 
\begin{align}\label{eq:cs}
\x\langle w, z\rangle  >- |\x||w| |z|=\x\langle w, -\text{sign}(\x)|z| \frac{w}{|w|}\rangle
\end{align}
for all $z\in \R^d$ such that $z\neq - \text{sign}(\x)|z|w/|w|$. Similarly to the proof of Lemma \ref{lemmex} we have
\begin{align*}
&\inf_{X^\mu\sim \P, \, Z^\mu \in B^\mu_k } \E_{\mu} [ U(\x+\x\langle w,X^\mu+ Z^\mu\rangle)]= \E_\mu[U(\x+\x\langle w, X^\ast+Z^\ast\rangle)]\\
&\qquad> \E_\mu[U(\x+\x\langle w, X^\ast-\text{sign}(\x)|Z^\ast|\frac{w}{|w|}\rangle)\mathds{1}_A] + \E_\mu[U(\x+\x\langle w, X^\ast+Z^\ast\rangle)\mathds{1}_{A^c}]\\
&\qquad= \E_\mu[U(\x+\x\langle w, X^\ast-\text{sign}(\x)|Z^\ast|\frac{w}{|w|}\rangle)] \\
&\qquad \ge \inf_{X^\mu\sim \P, \, Z^\mu \in B^\mu_k } \E_{\mu} [ U(\x+\x\langle w,X^\mu+ Z^\mu \rangle)],
\end{align*}
where we used \eqref{eq:cs} and that $U$ is strictly increasing for the strict inequality and also that 
$\|-\text{sign}(\x)|Z^\ast|\frac{w}{|w|}\|_{L_{\mu}^p(\R^d)}=\|Z^\ast\|_{L_{\mu}^p(\R^d)} \le k.$ This leads to a contradiction to $\mu(A)>0$. 
Furthermore, from the fact that $U$ is strictly increasing it is clear that $0\neq Z^\ast$ for any optimizer $(X^\ast,Z^\ast)$. Indeed, otherwise for any $\delta>0$
\begin{align*}
u(k,w)&= \E_\mu[U(\x+\x\langle w, X^\ast\rangle)]\\
&>\E_\mu[U(\x+\x\langle w, X^\ast\rangle - |\x|\delta )]= \E_\mu[U(\x+\x\langle w, X^\ast \rangle - \x \langle w, \delta \frac{w}{|w|^2}\text{sign}(\x) \rangle)].
\end{align*}
As  $-\delta \frac{w}{|w|^2}\text{sign}(\x)\in B_k^{\mu}$ for $\delta>0$ small enough, we get a contradiction. Lastly, if $\|Z^\ast\|_{L_{\mu}^p(\R^d)}<k$ then we have  
\begin{align*}
u(k,w)&
= \E_\mu[U(\x+\x\langle w, X^\ast+Z^\ast\rangle)]\\
&= \E_\mu[U(\x+\x\langle w, X^\ast-\mathrm{sign}(x_0)|Z^\ast| \frac{w}{|w|}\rangle)]\\
& = \E_\mu[U(\x+\x\langle w, X^\ast\rangle -|\x||Z^\ast||w|)]\\
&>\E_\mu[U(\x+\x\langle w, X^\ast\rangle -|\x|\frac{k}{\|Z^\ast\|_{L_{\mu}^p(\R^d)}}|Z^\ast||w|\rangle)].
\end{align*}
and $\|\frac{k}{\|Z^\ast\|_{L_{\mu}^p(\R^d)}} |Z^\ast| \|_{L_{\mu}^p(\R^d)}=k,$ 
leading to another contradiction. This concludes the proof.
\end{proof}

\begin{lemma}
Fix $w\neq 0$ and assume that $U$ is strictly increasing and satisfies Assumption \ref{ass:2}.  
Assume also that  $\langle \frac{w}{|w|}, X\rangle_*\P$ is absolutely continuous wrt. the Lebesgue measure on $\R$. Then 
\begin{align*}
u(k,w)=\inf_{\Q\in B_k(\P)} E_\Q[U(\x+\x\langle w, X\rangle)]
\end{align*}
has a unique minimizer.
\end{lemma}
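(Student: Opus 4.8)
The plan is to establish existence via Lemma~\ref{lemmex} and then upgrade to uniqueness by a strict-convexity-type argument along the one-dimensional projection. First I would recall from Lemma~\ref{lemmex} that an optimizer $\Q^\ast$ of $u(k,w)$ exists under Assumption~\ref{ass:2}, and by Lemma~\ref{lem:Z} every optimizer $(X^\ast,Z^\ast)$ of the reformulation \eqref{eq:Z} satisfies $Z^\ast=-\mathrm{sign}(x_0)|Z^\ast|\frac{w}{|w|}$ with $\|Z^\ast\|_{L^p_\mu(\R^d)}=k$. This means that the only degree of freedom in an optimizer is the scalar $|Z^\ast|\ge 0$ together with the direction already pinned down; in other words the problem reduces to choosing a nonnegative random variable $\zeta=|Z^\ast|$ with $\E_\mu[\zeta^p]=k^p$ maximizing (after the sign flip) $\E_\mu[U(x_0+x_0\langle w,X^\ast\rangle - |x_0||w|\zeta)]$ where $X^\ast\sim\P$.

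The key step is then to use the absolute continuity hypothesis on $\langle \frac{w}{|w|},X\rangle_*\P$ to argue that the optimal $\zeta$ is a \emph{deterministic function} of $Y:=\langle \frac{w}{|w|},X^\ast\rangle$, and that this function is uniquely determined. Concretely I would suppose $(X^\ast,Z^\ast)$ and $(\tilde X,\tilde Z)$ are two optimizers, write $\zeta=|Z^\ast|$, $\tilde\zeta=|\tilde Z|$, and note that by Lemma~\ref{lemmex}/Lemma~\ref{lem:Z} both values of $u(k,w)$ equal the corresponding expectations of $U(x_0+x_0|w|(Y-\mathrm{sign}(x_0)\zeta))$. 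Averaging the two optimizers (in the space of couplings of $\P$ with a perturbation of mass $k$) and invoking concavity of $U$ shows the average is still admissible with the same or larger value, so it is optimal too; but then strict monotonicity of $U$ forces $\zeta$ and $\tilde\zeta$ to agree $\mu$-a.s.\ on the set where the perturbation actually moves mass, after translating back into $\Q$-language via push-forwards. The absolute continuity of the law of $Y$ is what removes any ambiguity coming from ties (atoms of $Y$ would allow splitting mass in several ways), so one gets a genuinely unique worst-case measure $\Q^\ast$ rather than unique merely up to the law of $(x_0+x_0\langle w,X\rangle)$.

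The main obstacle I anticipate is making the ``averaging two optimizers stays admissible'' step precise in the Wasserstein-ball formulation: one must check that the midpoint coupling $\frac12(\pi_1+\pi_2)$ of the two optimal transport plans still has $p$-cost at most $k$ (which follows from convexity of $t\mapsto t^p$ composed with the cost, hence convexity of $\mathcal{W}_p^p$ in the second argument along such averages), and that concavity of $U$ gives $\E_{\text{mid}}[U(\cdots)]\ge \tfrac12(\E_{\pi_1}[U]+\E_{\pi_2}[U])=u(k,w)$, so equality holds throughout. Equality in Jensen/concavity combined with strict monotonicity of $U$ then yields that the integrands coincide a.s., and transporting this back through the identification $\zeta=|Z^\ast|$ as a function of $Y$ (using Lemma~\ref{lem:Z} to kill the directional freedom) pins down $\Q^\ast$ uniquely. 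A secondary technical point is handling the set $\{Y=0\}$ or where $\zeta=0$, but since $\|Z^\ast\|_p=k>0$ the perturbation is nontrivial and absolute continuity ensures $\{Y=c\}$ is $\P$-null for every constant $c$, so these exceptional sets cause no trouble.
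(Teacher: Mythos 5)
Your overall strategy (average two putative optimizers, exploit absolute continuity and strict monotonicity) points in the right direction, but the two steps that carry the argument do not work as stated. First, the ``averaging two optimizers and invoking concavity of $U$'' step has the inequality going the wrong way: the inner problem is an \emph{infimum}, and if you average the random variables $Z^1,Z^2$ pointwise, concavity only gives that the averaged candidate has value \emph{at least} the average of the two optimal values, which does not make it optimal for a minimization. What actually makes the midpoint optimal is mixing the \emph{measures} $\bar\Q=\tfrac12(\Q^1+\Q^2)$ and using that $\Q\mapsto\E_\Q[U(\x+\x\langle w,X\rangle)]$ is linear, so no concavity is needed at all. Second, your route to uniqueness, ``equality in Jensen/concavity plus strict monotonicity forces the integrands (hence $\zeta$ and $\tilde\zeta$) to coincide a.s.,'' requires \emph{strict} concavity of $U$, which is not assumed (only concavity and strict monotonicity); with a piecewise-linear $U$ equality in the concavity inequality pins down nothing. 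Relatedly, the claim that the optimal perturbation $\zeta$ must be a deterministic function of $Y=\langle \tfrac{w}{|w|},X^\ast\rangle$, which you call the key step, is asserted but never proved, and uniqueness of the measure $\Q^\ast$ does not follow from uniqueness of a random-variable representation anyway.

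The paper's proof uses the absolute continuity hypothesis in a different and essential way, which your argument misses. After reducing to the one-dimensional problem over $B_k(\P^w)$ with $\P^w=\langle\tfrac{w}{|w|},X\rangle_*\P$, it takes two distinct optimizers $\Q^1\neq\Q^2$, notes that $\bar\Q=\tfrac12(\Q^1+\Q^2)$ is optimal by linearity, and then shows $\bar\Q$ lies in the \emph{interior} of the ball: since $\P^w$ is atomless, the optimal coupling between $\P^w$ and $\bar\Q$ is unique and supported on the graph of a map, while $\tfrac12(\pi^1+\pi^2)$ is not a graph, hence suboptimal, giving the strict inequality $\mathcal{W}_p(\P^w,\bar\Q)^p<\tfrac12\bigl(\mathcal{W}_p(\P^w,\Q^1)^p+\mathcal{W}_p(\P^w,\Q^2)^p\bigr)\le k^p$ (your convexity-of-cost observation only yields the non-strict bound). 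Finally, strict monotonicity of $U$ shows an interior measure cannot be a minimizer (one can push mass further in the adverse direction, as in the $\|Z^\ast\|_{L^p_\mu}=k$ argument of Lemma \ref{lem:Z}), yielding the contradiction. To repair your proposal you would need to replace the concavity/Jensen-equality step by this strict-interior argument, or else add a strict concavity assumption that the lemma does not make.
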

We conjecture that the same claim is true without the assumption that $\langle \frac{w}{|w|}, X\rangle_*\P$ is absolutely continuous wrt. the Lebesgue measure on $\R$. We leave this for future research.
\begin{proof}
We first show that \eqref{eq:prob} can be rewritten as a one-dimensional optimization problem. 
Let $\P^w := \langle\frac{w}{|w|}, X\rangle  _*\P.$ 
We claim that 
\begin{align}\label{eq:1dim}
u(k,w)=\inf_{\Q\in B_k(\P)} \E_\Q[U(\x+\x\langle w, X\rangle)]= \inf_{\Q\in B_k(\P^w)} \E_\Q[U(\x+\x|w|X)].
\end{align}
Indeed, Lemma \ref{lem:villani} shows that 
\begin{align*}
\inf_{\Q\in B_k(\P)} \E_\Q[U(\x+\x\langle w, X\rangle)]&=\inf_{X^\mu\sim \P, \, Z^\mu \in B^\mu_k } 
\E_{\mu} [ U(\x+\x\langle w,X^\mu\rangle + \x\langle w, Z^\mu\rangle )].
\end{align*}
For any $X^\mu$ such that $X^\mu\sim \P$ under $\mu$, define $X^w:=\langle \frac{w}{|w|}, X^\mu\rangle.$ 
Then $X^w \sim \P^w$ under $\mu.$ Clearly, we have that 
\begin{align*}
\langle w,X^\mu\rangle + \langle w, Z^\mu\rangle = |w| \Big(X^w + \langle \frac{w}{|w|}, Z^\mu\rangle\Big) \mbox{ and } |\langle \frac{w}{|w|}, Z^\mu\rangle|\le |Z^\mu|
\end{align*}
for any $Z^\mu\in L_{\mu}^p(\R^d)$.
On the other hand for any $Y^\mu \in L_{\mu}^p(\R)$ we have
\begin{align*}
|w|(X^w+Y^\mu)= \langle w ,X^\mu\rangle + \langle w, \frac{w}{|w|}Y^\mu \rangle\quad \text{and}\quad  |\frac{w}{|w|}Y^\mu|=|Y^\mu|.
\end{align*}
This shows 
\begin{align*}
&\inf_{X^\mu\sim \P, \, Z^\mu \in B^\mu_k } \E_{\mu} [ U(\x+\x\langle w,X^\mu\rangle + \x\langle w, Z^\mu \rangle )]\\
&\qquad= \inf_{X^w\sim \P^w,\|Y^\mu\|_{L_\mu^p(\R)}\le  k} \E_{\mu} [ U(\x+\x|w| (X^w+Y^\mu)\rangle )]\\
&\qquad = \inf_{\Q\in B_k(\P^w)} \E_\Q[U(\x+\x|w|X)],
\end{align*}
where the last equality follows from similar arguments as in Lemma \ref{lem:villani}. So,  \eqref{eq:1dim} is proved. 
From Lemma  \ref{lemmex} an optimizer $\Q^\ast \in B_k(\P)$ of  \eqref{eq:prob} exists, so an optimizer $\Q^\ast \in B_k(\P^w)$ for \eqref{eq:1dim} also exists.  
Assume now that there exist $\Q^1\neq \Q^2 \in B_k(\P^w)$ attaining the infimum. Then clearly also the probability measure $(\Q^1+ \Q^2)/2$
attains the infimum, but 
\begin{align*}
\bar{\Q}:=\frac{1}{2} (\Q^1+ \Q^2) \in \text{int}( B_k(\P^w)).
\end{align*}
Indeed, take optimal transport plans $\pi^1$ and $\pi^2$ for $(\P^w,\Q^1)$ and $(\P^w,\Q^2)$. As $\P^w$ is absolutely continuous with respect to the Lebesgue measure, we know by \cite[Theorem 2.9]{santambrogio2015optimal}, that the unique optimal coupling of $\P^w$ and $\bar{\Q}$ for $\mathcal{W}_p$ is supported on a graph of a function. As $(\pi^1+\pi^2)/2 \in \Pi(\P^w,\bar{\Q})$ is not supported on a graph of a function (recalling that $\Q^1\neq \Q^2$), we conclude that it is not optimal. Thus 
\begin{align*}
\mathcal{W}_p( \P^w, \bar{\Q})^p< \frac{1}{2} ( \mathcal{W}_p(\P^w,\Q^1)^p+\mathcal{W}_p(\P^w,\Q^2)^p)\le k^p,
\end{align*}
so that indeed $\bar{\Q}\in \text{int}( B_k(\P^w))$ as claimed. As $U$ is strictly increasing, it then follows that $\Q^1$ and $\Q^2$ do not attain the infimum (using an argument similar to $Z^\ast\neq 0$ in the proof of Lemma \ref{lem:Z}), a contradiction.
\end{proof}

\bibliographystyle{plainnat}
\bibliography{bib}  %

\end{document}